\newtheorem{assumption}{Assumption}
\title{Inexact Derivative-Free Optimization for Bilevel Learning\thanks{MJE acknowledges support from the EPSRC (EP/S026045/1, EP/T026693/1), the Faraday Institution (EP/T007745/1) and the Leverhulme Trust (ECF-2019-478).}
}
\author{Matthias J. Ehrhardt         \and
        Lindon Roberts
}
\institute{M. J. Ehrhardt \at
              Institute for Mathematical Innovation and Department of Mathematical Sciences \\
              University of Bath \\
              \email{m.ehrhardt@bath.ac.uk}
           \and
           L. Roberts \at
              Mathematical Sciences Institute \\
              Australian National University \\
              \email{lindon.roberts@anu.edu.au}
}
\date{Received: date / Accepted: date}
\newcommand{\scn}[2]{#1\mathrm{e}{#2}}
\newcommand{\R}{\mathbb{R}}
\renewcommand{\t}[1]{\widetilde{#1}} 
\newcommand{\kappaef}{\kappa_{\rm ef}}
\newcommand{\kappaeg}{\kappa_{\rm eg}}
\newcommand{\bigO}{\mathcal{O}}
\newcommand{\revision}[1]{{#1}}
\def\LowObj{\Phi}
\def\LowObjI{\LowObj_i}
\def\LowObjIT{\LowObj_{i,\theta}}
\def\SpaceX{X}
\def\SpaceT{\Theta}
\def\HeightGr{3cm}
\newcommand{\secref}[1]{Section~\ref{#1}}
\newcommand{\thmref}[1]{Theorem~\ref{#1}}
\newcommand{\propref}[1]{Proposition~\ref{#1}}
\newcommand{\lemref}[1]{Lemma~\ref{#1}}
\newcommand{\corref}[1]{Corollary~\ref{#1}}
\renewcommand{\algref}[1]{Algorithm~\ref{#1}}
\newcommand{\assref}[1]{Assumption~\ref{#1}}
\newcommand{\figref}[1]{Figure~\ref{#1}}
\begin{document}

\maketitle

\begin{abstract}
Variational regularization techniques are dominant in the field of mathematical imaging. A drawback of these techniques is that they are dependent on a number of parameters which have to be set by the user. A by now common strategy to resolve this issue is to learn these parameters from data. While mathematically appealing this strategy leads to a nested optimization problem (known as bilevel optimization) which is computationally very difficult to handle. \revision{It is common when solving the upper-level problem to assume access to exact solutions of the lower-level problem, which is practically infeasible.} In this work we propose to solve these problems using inexact derivative-free optimization algorithms which never require \revision{exact lower-level problem solutions, but instead assume access to approximate solutions with controllable accuracy, which is achievable in practice}. We \revision{prove} global convergence and \revision{a worst-case complexity bound for } our approach. \revision{We test} our proposed framework on ROF-denoising and learning MRI sampling patterns.
Dynamically adjusting the lower-level accuracy yields  learned parameters with similar reconstruction quality as high-accuracy evaluations but with dramatic reductions in computational work (up to 100 times faster in some cases).
\keywords{derivative-free optimization \and bilevel optimization \and machine learning \and variational regularization}
\subclass{65D18 \and 65K10 \and 68T05 \and 90C26 \and 90C56}
\end{abstract}

\section{Introduction}
Variational regularization techniques are dominant in the field of mathematical imaging. For example, when solving a linear inverse problem $Ax = y$, variational regularization can be posed as the solution to
\begin{align}
    \min_{x} \mathcal D(Ax, y) + \alpha \mathcal R(x) \, . \label{EQ:LOWER:MOTIVATION}
\end{align}
Here the data fidelity $\mathcal D$ is usually chosen related to the assumed noise model of the data $y$ and the regularizer $\mathcal R$ models our a-priori knowledge of the unknown solution. Many options have been proposed in the literature, see for instance \cite{Ito2014book,Benning2018actanumerica,Chambolle2016actanumerica,Arridge2019,Scherzer2008book} and references therein. An important parameter for any variational regularization technique is the regularization parameter $\alpha$. While some theoretical results and heuristic choices have been proposed in there literature, see e.g. \cite{Engl1996,Benning2018actanumerica} and references therein or the L-curve criterion \cite{Hansen1992lcurve}, the appropriate choice of the regularization parameter in a practical setting remains an open problem. Similarly, other parameters in \eqref{EQ:LOWER:MOTIVATION} have to be chosen by the user, such as smoothing of the total variation  \cite{Chambolle2016actanumerica}, the hyperparameter for total generalized variation \cite{Bredies2010} or the sampling pattern in magnetic resonance imaging (MRI), see e.g. \cite{Usman2009,Gozcu2018samplingmri,Sherry2019sampling}.

Instead of using heuristics for choosing all of these parameters, here we are interested in finding these from data. A by-now common strategy to learn parameters of a variational regularization model from data is bilevel learning, see e.g. \cite{DeLosReyes2013,Kunisch2013bilevel,Ochs2015,Hintermueller2020tgv,Sherry2019sampling,Riis2018,Bartels2020} and references in \cite{Arridge2019}
.  Given labelled data $(x_i, y_i)_{i=1,\ldots,n}$ we find parameters $\theta \in \SpaceT \subset \mathbb R^m$ by solving the upper-level problem
\begin{align}
	\min_{\theta\in\SpaceT} f(\theta) := \frac1n \sum_{i=1}^n \|\hat x_i(\theta) - x_i\|^2 + \mathcal J(\theta), \label{EQ:UPPER}
\end{align}
where $\hat x_i(\theta) \in \SpaceX \subset \mathbb R^d$ \revision{aims to recover the true data $x_i$ by solving} the lower-level problems
\begin{align}
    \hat x_i(\theta) := \arg \min_{x\in\SpaceX} \LowObjIT(x), \qquad \forall i = 1, \ldots, n \label{EQ:LOWER} \, .
\end{align}
The lower-level objective $\LowObjIT$ could be of the form $\LowObjIT(x) = \mathcal D(Ax, y_i) + \theta \mathcal R(x)$ as in \eqref{EQ:LOWER:MOTIVATION} but we will not restrict ourselves to this special case. In general $\LowObjIT$ will depend on the data $y_i$. 

\revision{In many situations, it is possible to acquire suitable data $(x_i, y_i)_{i=1,\ldots,n}$. For image denoising, we may take any ground truth images $x_i$ and add artificial noise to generate $y_i$. Alternatively, if we aim to learn a sampling pattern (such as for learning MRI sampling patterns, which we consider in this work), then $x_i$ can be any fully sampled image. The same also holds for problems such as image compression, where again $x_i$ is any ground truth image. In both these cases, $y_i$ is subsampled information from $x_i$ (depending on $\theta$) from which the remaining information is reconstructed to get $\hat{x}_i(\theta)$.}

While mathematically appealing, this nested optimization problem is computationally very difficult to handle since even the evaluation of the upper-level problem \eqref{EQ:UPPER} requires the exact solution of the lower-level problems \eqref{EQ:LOWER}. This requirement is practically infeasible and common algorithms in the literature compute the lower-level solution only to some accuracy, thereby losing any theoretical performance guarantees, see e.g. \cite{DeLosReyes2013,Riis2018,Sherry2019sampling}.
One reason for needing exact solutions is to compute the gradient of the upper-level objective using the implicit function theorem \cite{Sherry2019sampling}, which we address by using upper-level solvers which do not require gradient computations.

In this work we propose to solve these problems using inexact derivative-free optimization~(DFO) algorithms which never require exact solutions to the lower-level problem while still yielding convergence guarantees. 
Moreover, by dynamically adjusting the accuracy we gain a significant computational speed-up compared to using a fixed accuracy for all lower-level solves. 
The proposed framework is tested on two problems: learning regularization parameters for ROF-denoising and learning the sampling pattern in MRI.

\revision{We contrast our approach to \cite{Kunisch2013bilevel}, which develops a semismooth Newton method to solve the full bilevel optimality conditions. In \cite{Kunisch2013bilevel} the upper- and lower-level problems are of specific structure, and exact solutions of the (possibly very large) Newton system are required. Separately, the approach in \cite{Ochs2015} replaces the lower-level problem with finitely many iterations of some algorithm and solves this perturbed problem exactly. Our formulation is very general and all approximations are controlled to guarantee convergence to the solution of the original variational problem.}

\textbf{Aim:} Use inexact computations of $\hat x_i(\theta)$ within a derivative-free upper-level solver, which makes \eqref{EQ:UPPER} computationally tractable, while retaining convergence guarantees.

\subsection{Derivative-free optimization}
Derivative-free optimization methods---that is, optimization methods that do not require access to the derivatives of the objective (and/or constraints)---have grown in popularity in recent years, and are particularly suited to settings where the objective is computationally expensive to evaluate and/or noisy; we refer the reader to \cite{Conn2009,Audet2017} for background on DFO and examples of applications, and to \cite{Larson2019} for a comprehensive survey of recent work.
The use of DFO for algorithm tuning has previously been considered in a general framework \cite{Audet2006a}, and in the specific case of hyperparameter tuning for neural networks in \cite{Lakhmiri2019}. 

Here, we are interested in the particular setting of learning for variational methods \eqref{EQ:UPPER}, which has also been considered in \cite{Riis2018} where a new DFO algorithm based on discrete gradients has been proposed. In \cite{Riis2018} it was assumed that the lower-level problem can be solved exactly such that the bilevel problem can be reduced to a single nonconvex optimization problem. In the present work we lift this stringent assumption. 

In this paper we focus on DFO methods which are adapted to nonlinear least-squares problems as analyzed in \cite{Zhang2010,Cartis2019a}.
These methods are so-called `model-based', in that they construct a model approximating the objective at each iteration, locally minimize the model to select the next iterate, and update the model with new objective information.
Our work also connects to \cite{Conn2012}, which considers model-based bilevel optimization where both the lower- and upper-level problems are solved in a derivative-free manner; particular attention is given here to reusing evaluations of the (assumed expensive) lower-level objective at nearby upper-level parameters, to make lower-level model construction simpler.

Our approach for bilevel DFO is is based on dynamic-accuracy (derivative-based) trust-region methods \cite[Chapter 10.6]{Conn2000}.
In these approaches, we use the measures of convergence (e.g.~trust-region radius, model gradient) to determine a suitable level of accuracy with which to evaluate the objective; we start with low accuracy requirements, and increase the required accuracy as we converge to a solution.
In a DFO context, this framework is the basis of \cite{Conn2012}, and a similar approach was considered in \cite{Chen2012} in the context of analyzing protein structures.
This framework has also been recently extended in a derivative-based context to higher-order regularization methods \cite{Bellavia2019,Gratton2019}.
We also note that there has been some work on multilevel and multi-fidelity models (in both a DFO and derivative-based context), where an expensive objective can be approximated by surrogates which are cheaper to evaluate \cite{March2012,Calandra2019}.

\subsection{Contributions}
There are a number of novel aspects to this work.
Our use of DFO for bilevel learning means our upper-level solver genuinely expects inexact lower-level solutions.
We give worst-case complexity theory for our algorithm both in terms of upper-level iterations and computational work from the lower-level problems.
Our numerical results on ROF-denoising and a new framework for learning MRI sampling patterns demonstrate our approach is substantially faster---up to 100 times faster---than the same DFO approach with high accuracy lower-level solutions, while achieving the same quality solutions.
More details on the different aspects of our contributions are given below.

\paragraph{Dynamic accuracy DFO algorithm for bilevel learning}
As noted in \cite{Sherry2019sampling}, bilevel learning can require very high-accuracy solutions to the lower-level problem.
We avoid this via the introduction of a dynamic accuracy model-based DFO algorithm.
In this setting, the upper-level solver dynamically changes the required accuracy for lower-level problem minimizers, where less accuracy is required in earlier phases of the upper-level optimization.
The proposed algorithm is similar to \cite{Conn2012}, but adapted to the nonlinear least-squares case and allowing derivative-based methods to solve the lower-level problem.
\revision{Our theoretical results extend the convergence results of \cite{Conn2012} to include derivative-based lower-level solvers and a least-squares structure, as well as adding a worst-case complexity analysis in a style similar to \cite{Cartis2019a} (which is also not present in the derivative-based convergence theory in \cite{Conn2000}). This analysis gives bounds on the number of iterations of the upper-level solver required to reach a given optimality, which we then extend to bound the total computational effort required for the lower-level problem solves. There is increasing interest, but comparatively fewer works, which explicitly bound the total computational effort of nonconvex optimization methods; see \cite{Royer2020} for Newton-CG methods and references therein.}
We provide a preliminary argument that our computational effort bounds are tight with regards to the desired upper-level solution accuracy, although we delegate a complete proof to future work.

\paragraph{Robustness}
We observe in all our results using several lower-level solvers (gradient descent and FISTA) for a variety of applications that the proposed upper-level DFO algorithm converges to similar objective values and minimizers. We also present numerical results for denoising showing that the learned parameters are robust to initialization of the upper-level solver despite the upper-level problem being likely nonconvex. Together, these results suggest that this framework is a robust approach for bilevel learning.

\paragraph{Efficiency}
Bilevel learning with a DFO algorithm was previously considered \cite{Riis2018}, but there a different DFO method based on discrete gradients was used, and was applied to nonsmooth problems with exact lower-level evaluations.
In \cite{Riis2018}, only up to two parameters were learned, whereas here we demonstrate our approach is capable of learning many more. Our numerical results include examples with up to 64 parameters.

We demonstrate that the dynamic accuracy DFO achieves comparable or better objective values than the fixed accuracy variants and final reconstructions of comparable quality.
However our approach is able to achieve this with a dramatically reduced computational load, in some cases up to 100 times less work than the fixed accuracy variants.

\paragraph{New framework for learning MRI sampling}
We introduce a new framework to learn the sampling pattern in MRI based on bilevel learning.
Our idea is inspired by the image inpainting model of \cite{Chen2014}.
Compared to other algorithms to learn the sampling pattern in MRI based on first-order methods \cite{Sherry2019sampling}, the proposed approach seems to be much more robust to \revision{initialization} and choice of solver for the lower-level problem.
As with the denoising examples, our dynamic accuracy DFO achieves the same upper-level objective values and final reconstructions as fixed accuracy variants but with substantial reductions in computational work.

\paragraph{Regularization parameter choice rule with machine learning}
Our numerical results suggest that the bilevel framework can learn regularization parameter choice rule which yields a convergent regularization method in the sense of \cite{Scherzer2008book,Ito2014book}, indicating for the first time that machine learning can be used to learn mathematically sound regularization methods.

\subsection{Structure}
In \secref{sec_lower_level} we describe problems where the lower-level model \eqref{EQ:LOWER:MOTIVATION} applies and describe how to efficiently attain a given accuracy level using standard first-order methods.
Then in \secref{sec_dfo} we introduce the dynamic accuracy DFO algorithm and present our global convergence and worst-case complexity bounds.
Finally, our numerical experiments are described in \secref{sec_numerics}.

\subsection{Notation}
Throughout, we let $\|\cdot\|$ we denote the Euclidean norm of a vector in $\R^n$ and the operator 2-norm of a matrix in $\R^{m\times n}$. 
We also define the weighted (semi)norm $\|x\|^2_S := x^T Sx$ for a symmetric and positive (semi)definite matrix $S$.
The gradient of a scalar-valued function $f:\R^n\to \R$ is denoted by $\nabla f:\R^n\to\R^n$, and the derivative of a vector-valued function $f : \mathbb R^n \to \mathbb R^m$ is denoted by $\partial f : \mathbb R^n \to \mathbb R^{n \times m}, (\partial f)_{i,j} = \partial_i f_j$ where $\partial_i f_j$ denotes the partial derivative of $f_j$ with respect to the $i$th coordinate. 
If $f$ is a function of two variables $x$ and $y$, then $\partial_x f$ denotes the derivative with respect to $x$. 

\subsection{Software}
Our implementation of the DFO algorithm and all numerical testing code will be made public upon acceptance.

\section{Lower-Level Problem} \label{sec_lower_level}

In order to have sufficient control over the accuracy of the solution to \eqref{EQ:LOWER} we will assume that $\LowObjIT$ are $L_i$-smooth and $\mu_i$-strongly convex, see definitions below.
\begin{definition}[Smoothness]
A function $f : \mathbb R^n \to \mathbb R$ is $L$-smooth if it is differentiable and its derivative is Lipschitz continuous with constant \revision{$L>0$}, i.e. for all $x, y \in \mathbb R^n$ we have
$\|\nabla f(x) - \nabla f(y)\| \leq L \|x - y\|$.
\end{definition}
\begin{definition}[Strong Convexity]
A function $f : \mathbb R^n \to \mathbb R$ is $\mu$-strongly convex \revision{for $\mu>0$} if $f - \frac{\mu}{2}\|\cdot\|^2$ is convex.
\end{definition}

Moreover, when the lower-level problem is strictly convex and smooth, with $\LowObjI(x, \theta) := \LowObjIT(x)$ we can equivalently describe the minimizer \revision{of $\LowObjIT$} by 
\begin{align}
\partial_x \LowObjI(\hat x_i(\theta), \theta) = 0 \, .
\end{align}
Smoothness properties of $\hat x_i$ follow from the implicit function theorem and its generalizations if $\LowObjI$ is smooth and regular enough.

\begin{assumption} \label{as:phi}
We assume that for all $i = 1, \ldots, n$ the following statements hold.
\begin{enumerate}[leftmargin=*]
    \item Convexity: For all $\theta \in \SpaceT$ the functions $\LowObjIT$ are $\mu_i$-strongly convex.
    \item Smoothness in $x$: For all $\theta \in \SpaceT$ the functions $\LowObjIT$ are $L_i$-smooth.
    \item Smoothness in $(x, \theta)$: The derivatives $\partial_x \LowObjI : \SpaceX \times \SpaceT \to \SpaceX$ and $\partial_{xx} \LowObjI : \SpaceX \times \SpaceT \to \SpaceX^2$ exist and are continuous.
\end{enumerate}
\end{assumption}

\begin{theorem} \label{th:phi}
Under \assref{as:phi} the function $\hat x_i(\theta) := \arg \min_x \LowObjI(x, \theta)$ is 
\begin{enumerate}[leftmargin=*]
\item well-defined 
\item locally Lipschitz
\item continuously differentiable and \\ $\partial \hat x_i(\theta) = - \partial_{xx}\LowObjI(\hat x_i(\theta), \theta)^{-1} \partial_{x}\partial_\theta \LowObjI(\hat x_i(\theta), \theta)$.
\end{enumerate}
\begin{proof}
Ad 1) Finite and convex functions are continuous \cite[Corollary 2.36]{Rockafellar2008}. It is easy to show that $\mu$-strongly convex functions are coercive. Then the existence and uniqueness follows from classical theorems, e.g. \cite[Theorem 6.31]{Bredies2018book}. Ad 2) This statement follows directly from \cite[Theorem 2.1]{Robinson1980}. Ad 3) This follows directly from the classical inverse function theorem, see e.g.~\cite[Theorem 3.5.1]{Duistermaat2004a}.
\end{proof}
\end{theorem}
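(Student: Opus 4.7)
The plan is to prove the three assertions in sequence, relying throughout on classical existence/regularity results whose hypotheses are straightforward to verify from Assumption~\ref{as:phi}. For part~1 the argument is short: $\mu_i$-strong convexity of $\LowObjIT$ yields the quadratic lower bound $\LowObjIT(x) \geq \LowObjIT(x_0) + \langle \nabla\LowObjIT(x_0), x-x_0\rangle + \tfrac{\mu_i}{2}\|x-x_0\|^2$, which simultaneously gives coercivity (the right-hand side tends to $+\infty$ as $\|x\|\to\infty$) and strict convexity. Continuity is automatic from Assumption~\ref{as:phi}(2), so the Weierstrass theorem delivers existence and strong convexity delivers uniqueness, exactly along the lines of \cite[Theorem~6.31]{Bredies2018book}.

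For part~2 I would either cite \cite[Theorem~2.1]{Robinson1980} directly, or give the short self-contained monotonicity argument that underlies it. Fix $\theta_1, \theta_2$ in a compact neighborhood of $\SpaceT$ and set $x_k := \hat x_i(\theta_k)$. The first-order conditions $\partial_x\LowObjI(x_k,\theta_k)=0$ combined with strong monotonicity of $\partial_x\LowObjI(\cdot,\theta_1)$ (which is implied by $\mu_i$-strong convexity) yield
\[
\mu_i\|x_1 - x_2\|^2 \leq \langle \partial_x\LowObjI(x_2,\theta_1) - \partial_x\LowObjI(x_2,\theta_2),\; x_2 - x_1\rangle.
\]
Continuity of $\partial_x\partial_\theta\LowObjI$ on the compact neighborhood supplies a constant $C$ with $\|\partial_x\LowObjI(x_2,\theta_1) - \partial_x\LowObjI(x_2,\theta_2)\| \leq C\|\theta_1 - \theta_2\|$ uniformly in $x_2$ restricted to a compact set, and Cauchy--Schwarz then gives $\|x_1 - x_2\| \leq (C/\mu_i)\|\theta_1 - \theta_2\|$.

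For part~3 I would apply the classical implicit function theorem to the mapping $F(x,\theta) := \partial_x\LowObjI(x,\theta)$. The identity $F(\hat x_i(\theta),\theta)=0$ is just the optimality condition used in part~1; $F$ is $C^1$ by Assumption~\ref{as:phi}(3); and the partial Jacobian $\partial_x F = \partial_{xx}\LowObjI$ is invertible since strong convexity gives $\partial_{xx}\LowObjI \succeq \mu_i I \succ 0$. The theorem then produces local $C^1$ regularity of $\hat x_i$, and implicit differentiation of $F(\hat x_i(\theta),\theta)=0$ in $\theta$ delivers the stated formula. The main obstacle throughout is really one of bookkeeping rather than mathematics: the constants $\mu_i$ and the continuity hypotheses must be used uniformly on compact neighborhoods in which the Lipschitz and implicit function statements are formulated. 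Because Assumption~\ref{as:phi} provides $\mu_i$ independent of $\theta$ and joint continuity of the relevant second derivatives, all three conclusions reduce to standard citations.
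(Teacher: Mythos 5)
Your proposal is correct and follows essentially the same route as the paper: existence/uniqueness via coercivity from strong convexity plus continuity (the paper cites \cite[Theorem 6.31]{Bredies2018book}), local Lipschitz continuity via \cite[Theorem 2.1]{Robinson1980} (your strong-monotonicity estimate $\mu_i\|x_1-x_2\|\leq C\|\theta_1-\theta_2\|$ is precisely the elementary argument underlying that citation in this smooth, strongly convex setting), and differentiability plus the formula for $\partial \hat x_i(\theta)$ via the implicit function theorem applied to $\partial_x\LowObjI(\hat x_i(\theta),\theta)=0$, exactly as the paper does. The only caveat, which your proof shares with the paper's own, is that the Lipschitz constant $C$ and the $C^1$ property of $F$ implicitly use existence and continuity of $\partial_\theta\partial_x\LowObjI$, which \assref{as:phi} does not state explicitly but which the formula in part 3 presupposes anyway.
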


\subsection{Examples}
A relevant case of the model introduced above is the parameter tuning for linear inverse problems, which can be solved via the variational regularization model
\begin{align}
\frac12 \|A x - y_i\|^2_S + \alpha \operatorname{TV}(x), \label{EQ:LOWER:LINIP}
\end{align}
where $\operatorname{TV}(x) := \sum_{j=1}^m \|\widehat{\nabla} x(j)\|$ denotes the discretized total variation, e.g.~$\widehat{\nabla} x(j)$
is the finite forward difference discretization of the spatial gradient of $x$ at pixel $j$.
\revision{However, we note that \eqref{EQ:LOWER:LINIP} does not satisfy \assref{as:phi}.}

\revision{To ensure \assref{as:phi} holds, we instead use} $\|x\| \approx \sqrt{\|x\|^2 + \nu^2}$, \revision{to} approximate problem \eqref{EQ:LOWER:LINIP} by a smooth and strongly convex problem of the form
\begin{align}
\hat x_i(\theta) &:= \arg \min_x \left\{\LowObjIT(x) = \frac12 \|A(\theta)x - y_i\|^2_{S(\theta)}\right. \nonumber \\ & \qquad\qquad \left.+ \alpha(\theta) \operatorname{TV}_{\nu(\theta)}(x) + \frac{\xi(\theta)}{2} \|x\|^2 \right\} \label{EQ:LOWER:APPROX} \, ,
\end{align}
with the smoothed total variation given by $\operatorname{TV}_{\nu(\theta)}(x) := \sum_{j=1}^m \sqrt{\|\widehat{\nabla} x(j)\|^2 + \nu(\theta)^2}$. 
Here we already introduced the notation that various parts of the problem may depend on a vector of parameters $\theta$ which usually needs to be selected manually. 
We will learn these parameters using the bilevel framework. 
For simplicity denote $A_\theta := A(\theta)$, $S_\theta := S(\theta)$, $\alpha_\theta := \alpha(\theta)$, $\nu_\theta := \nu(\theta)$ and $\xi_\theta := \xi(\theta)$. Note that $\LowObjIT$ in \eqref{EQ:LOWER:APPROX} is $L_i$-smooth and $\mu_i$-strongly convex with
\begin{align}
L_i &\leq \|A_\theta^* S_\theta A_\theta\| + \alpha_\theta \frac{\|\partial\|^2}{\nu_\theta} + \xi_\theta, \quad \text{and} \nonumber \\ 
\mu_i &\geq \lambda_{\operatorname{min}}(A_\theta^* S_\theta A_\theta) + \xi_\theta \, , \label{eq_condition_number}
\end{align}
where $\lambda_{\operatorname{min}}(A_\theta^* S_\theta A_\theta)$ denotes the smallest eigenvalue of $A_\theta^* S_\theta A_\theta$ \revision{and $A_\theta^*$ is the adjoint of $A_\theta$}.

\revision{We now describe two specific problems we will use in our numerical results. They both choose a specific form for \eqref{EQ:LOWER} which aims to find a minimizer $\hat{x}_i(\theta)$ which (approximately) recovers the data $x_i$, and so both use \eqref{EQ:UPPER} as the upper-level problem.}

\subsubsection{Total Variation-based Denoising} \label{sec_tv_denoising}
A particular problem we consider is a smoothed version of the ROF model \cite{Rudin1992ROF}, i.e. $A_\theta = I, S_\theta = I$. Then \eqref{EQ:LOWER:APPROX} simplifies to
\begin{align}
\LowObjIT(x) = \frac12 \|x - y_i\|^2 + \alpha_\theta \operatorname{TV}_{\nu_\theta}(x) + \frac{\xi_\theta}{2} \|x\|^2, \label{EQ:LOWER:APPROX:TV}
\end{align}
which is $L_i$-smooth and $\mu_i$-strongly convex with constants as in \eqref{eq_condition_number} with $\|A_\theta^* S_\theta A_\theta\| = \|I\| = 1$ and $\lambda_{\operatorname{min}}(A_\theta^* S_\theta A_\theta) = \lambda_{\operatorname{min}}(I) = 1$.
In our numerical examples we will consider two cases. First, we will just learn the regularization parameter $\alpha$ given manually set $\nu$ and $\xi$. Second, we will learn all three parameters $\alpha, \nu$ and $\xi$.

\subsubsection{Undersampled MRI Reconstruction}
Another problem we consider is the reconstruction from undersampled MRI data, see e.g. \cite{Lustig2007sparseMRI}, which can be phrased as \eqref{EQ:LOWER:APPROX} with $A_\theta = F$ where $F$ is the discrete Fourier transform and $S_\theta = \operatorname{diag}(s), s \in [0, 1]^d$. Then \eqref{EQ:LOWER:APPROX} simplifies to
\begin{align}
\LowObjIT(x) = \frac12 \|F x - y_i\|^2_{S_\theta} + \alpha_\theta \operatorname{TV}_{\nu_\theta}(x) + \frac{\xi_\theta}{2} \|x\|^2, \label{EQ:LOWER:APPROX:MRI}
\end{align}
which is $L_i$-smooth and $\mu_i$-strongly convex with constants as in \eqref{eq_condition_number} with $\|A_\theta^* S_\theta A_\theta\| \leq 1$ and $\lambda_{\operatorname{min}}(A_\theta^* S_\theta A_\theta) \geq 0$. The sampling coefficients $s_j$ indicate the relevance of a sampling location. The data term \eqref{EQ:LOWER:APPROX:MRI} can be rewritten as
\begin{align}
\|F x - y_i\|^2_{S_\theta} = \sum_{s_j > 0} s_j |[F x - y_i]_j|^2 \, . 
\end{align}
Most commonly the values $s$ are binary and manually chosen. Here we aim to use bilevel learning to find a sparse $s$ such that the images $x_i$ can be reconstructed well from sparse samples of $y_i$. This approach was first proposed in \cite{Sherry2019sampling}.

\subsection{Example Training Data} \label{sec_data_model}
Throughout this paper, we will consider training data of artificially-generated 1D images.
Each ground truth image $x_i$ is randomly-generated piecewise-constant function.
For a desired image size $N$, we select values $C_i\in[N/4,3N/4]$ and $R_i\in[N/8,N/4]$ from a uniform distribution.
We then define $x_i\in\R^N$ by
\begin{align}
    [x_i]_j := \begin{cases}1, & |j-C_i| < R_i, \\ 0, & \text{otherwise}, \end{cases} \qquad \forall j=1,\ldots,N.
\end{align}
That is, each $x_i$ is zero except for a single randomly-generated subinterval of length $2 R_i$ centered around $C_i$ where it takes the value 1.

We then construct our $y_i$ by taking the signal to be reconstructed and adding Gaussian noise.
Specifically, for the image denoising problem we take 
\begin{align}
    y_i := x_i + \sigma \omega_i \,,
\end{align}
where $\sigma>0$ and $\omega_i\in\R^N$ is randomly-drawn vector of i.i.d.~standard Gaussians.
For the MRI sampling problem, we take
\begin{align}
    y_i := F x_i + \frac{\sigma}{\sqrt{2}}\omega_i \,.
\end{align}
where $\sigma>0$ and $\omega_i\in\mathbb{C}^N$ is a randomly-drawn vector with real and imaginary parts both standard Gaussians.

In \figref{fig:1ddenoising:data} we plot an example collection of pairs $(x_i,y_i)$ for the image denoising problem with $N=256$, and in \figref{fig:1ddenoising:results} we plot the solution to \eqref{EQ:LOWER:APPROX:TV} for the first of these $(x_i,y_i)$ pairs for a variety of choices for the parameters $\alpha_{\theta},\epsilon_{\theta},\eta_{\theta}$.

\begin{figure}
    \centering
    \includegraphics[width=8cm, height=3.5cm]{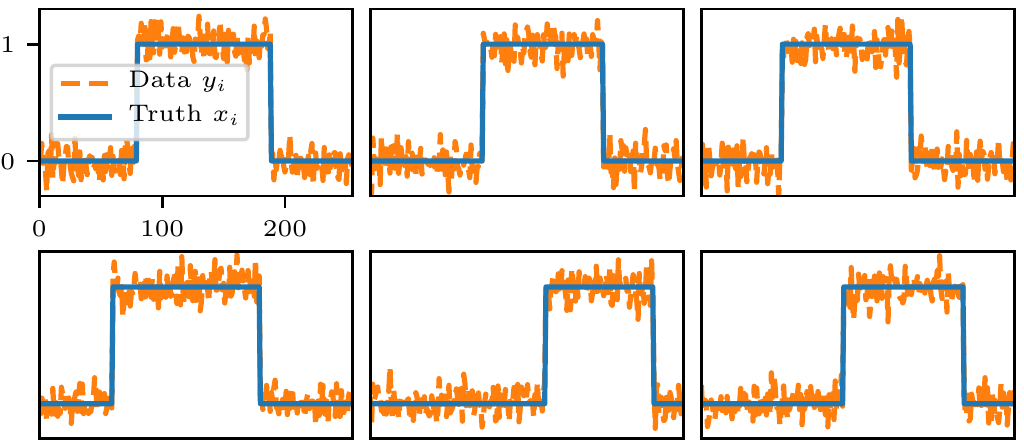}
    \caption{Examples of training pairs $(x_i, y_i)$ for image denoising.}
    \label{fig:1ddenoising:data}
\end{figure}

\subsection{Approximate Solutions}

\subsubsection{Gradient Descent}

For simplicity we drop the dependence on $i$ for the remainder of this section.

The lower-level problem \eqref{EQ:LOWER} can be solved with gradient descent (GD) which converges linearly for $L$-smooth and $\mu$-strongly convex problems. One can show (e.g. \cite{Chambolle2016actanumerica}) that  GD
\begin{align}
x^{k+1} = x^k - \tau \nabla \Phi(x^k), \label{EQ:GRADDESC}
\end{align}
with $\tau = 1/L$, converges linearly to the unique solution $x^*$ of \eqref{EQ:LOWER}. More precisely, for all $k \in \mathbb N$ we have \cite[Theorem 10.29]{Beck2017}
\begin{align}
\|x^k - x^*\|^2 \leq \left(1 - \mu / L\right)^k \|x^0 - x^*\|^2 \, .\label{EQ:TOL}
\end{align}
Moreover, if one has a good estimate of the strong convexity constant $\mu$, then it is better to choose $\tau=2/(L+\mu)$, which gives an improved linear rate \cite[Theorem 2.1.15]{Nesterov2004}
\begin{align}
    \|x^k - x^*\|^2 \leq (1 - \mu / L)^{2k} \|x^0 - x^*\|^2. \label{EQ:TOLGDUNCONS}
\end{align}

\begin{figure}
    \centering
    \includegraphics[width=8cm, height=4.3cm]{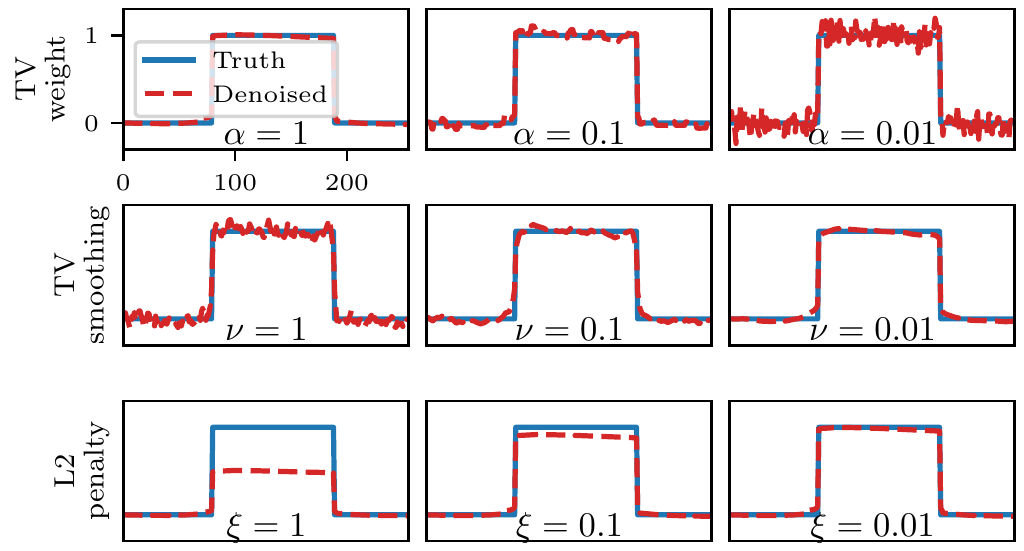}
    \caption{Examples of denoised data using model \eqref{EQ:LOWER:APPROX:TV} obtained by running GD with a tolerance of $\|x^k-x^*\|\leq\scn{1}{-6}$. The data $(x_i,y_i)$ is the top-left image in \figref{fig:1ddenoising:data}. \textbf{Top:} results with $\alpha=1,0.1,0.01$ (left to right) with $\nu=\xi=\scn{1}{-3}$ throughout.
    \textbf{Middle:} results with $\nu=1,0.1,0.01$ (left to right) with $\alpha=1$ and $\xi=\scn{1}{-3}$ throughout.
    \textbf{Bottom:} results with $\xi=1,0.1,0.01$ (left to right) with $\alpha=1$ and $\nu=\scn{1}{-3}$ throughout.}
    \label{fig:1ddenoising:results}
\end{figure}

\subsubsection{FISTA}
Similarly, we can use FISTA \cite{Beck2009} to approximately solve the lower-level problem. FISTA applied to a smooth objective with convex constraints is a modification of \cite{Nesterov1983} and can be formulated as the iteration
\begin{align}\begin{aligned}
t_{k+1} &= \frac{1 - q t_k^2 + \sqrt{(1 - q t_k^2)^2 + 4 t_k^2}}{2}, \\
\beta_{k+1} &= \frac{(t_k - 1)(1 - t_{k+1} q)}{t_{k+1}(1 - q)}, \\
z^{k+1} &= x^k + \beta_{k+1}(x^k - x^{k-1}), \\
x^{k+1} &= z^{k+1} - \tau \nabla \Phi(z^{k+1}), \\
\end{aligned}\label{EQ:FISTA}
\end{align}
where $q:=\tau \mu$, and we choose $\tau=1/L$ and $t_0=0$ \cite[Algorithm 5]{Chambolle2016actanumerica}.
We then achieve linear convergence with \cite[Theorem 4.10]{Chambolle2016actanumerica}
\begin{align}
    \Phi(x^k)-\Phi(x^*) \leq \left(1-\sqrt{q}\right)^k \left[\frac{L}{2}(1+\sqrt{q})\|x^0-x^*\|^2\right],
\end{align}
and so, since $\Phi(x^k)-\Phi(x^*)\geq (\mu/2)\|x^k-x^*\|^2$ from $\mu$-strong convexity, we get
\begin{align}
    \|x^k - x^\ast\|^2 \leq \left(1 - \sqrt{\frac{\mu}{L}}\right)^k \left[\frac{L}{\mu}\left(1+\sqrt{\frac{\mu}{L}}\right) \|x^0 - x^*\|^2\right] \, . \label{eq_fista_rate}
\end{align}

\subsubsection{Ensuring accuracy requirements}
We will need to be able to solve the lower-level problem to sufficient accuracy that we can guarantee $\|x^k-x^*\|^2 \leq \epsilon$, for a suitable accuracy $\epsilon>0$.
We can guarantee this accuracy by ensuring we terminate with $k$ sufficiently large, given an estimate $\|x^0-x^*\|^2$, using the a-priori bounds \eqref{EQ:TOL} or \eqref{eq_fista_rate}.
A simple alternative is to use the a-posteriori bound  $\|x-x^*\|\leq\|\nabla\Phi(x)\|/\mu$ for all $x$ \revision{(a consequence of \cite[Theorem 5.24(iii)]{Beck2009})}, and terminate once
\begin{align}
    \|\nabla\Phi(x^k)\|^2/\mu^2 \leq\epsilon. \label{eq_accuracy_gradient}
\end{align}
To compare these two options, we consider two test problems: i) a version of Nesterov's quadratic \cite[Section 2.1.4]{Nesterov2004} in $\R^{10}$, and ii) 1D image denoising. Nesterov's quadratic is defined as
\begin{align}
    \Phi(x) &:= \frac{\t{\mu}(Q-1)}{8}\left(x^T A x - 2x_1\right) + \frac{\t{\mu}}{2}\|x\|^2, \nonumber \\ & \text{where}\: A:=\begin{bmatrix}2 & -1 & &  \\ -1 & 2 & \ddots & \\ & \ddots & \ddots & -1 \\ & & -1 & 2\end{bmatrix},
\end{align}
for $x\in\R^{10}$, with $\t{\mu}=1$ and $Q=100$, which is $\mu$-strongly convex and $L$-smooth for $\mu\approx 3$ and $L\approx 98$; we apply no constraints, $\SpaceX=\R^{10}$.

\begin{figure}
\def\WidthGr{4.1cm}
    \centering
    \subfloat[GD]{\label{fig_accuracy_gd}\includegraphics[width=\WidthGr,height=2.85cm]{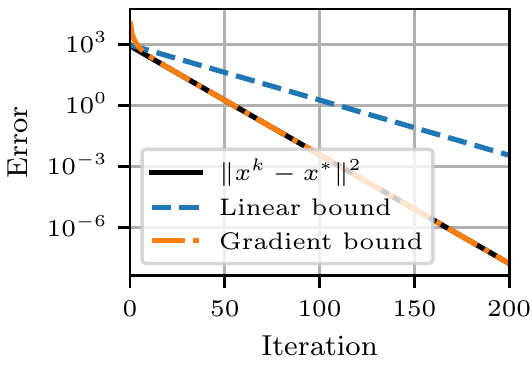}}
    \hfill
    \subfloat[FISTA]{\label{fig_accuracy_fista}\includegraphics[width=\WidthGr,height=2.85cm]{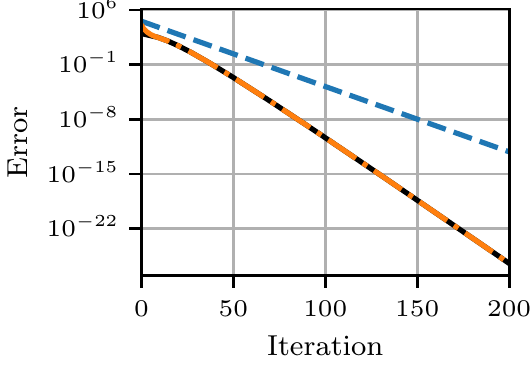}}
    \hfill
    \subfloat[GD]{\label{fig_accuracy_gd_desnoising}\includegraphics[width=\WidthGr,height=2.85cm]{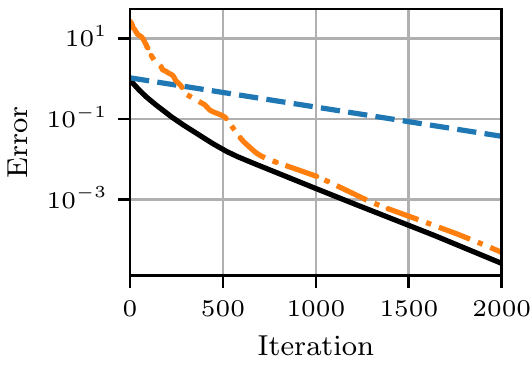}}
    \hfill
    \subfloat[FISTA]{\label{fig_accuracy_fista_desnoising}\includegraphics[width=\WidthGr,height=2.85cm]{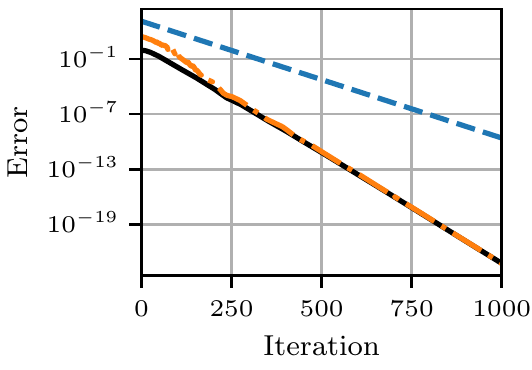}}
	\caption{Comparison of a-priori linear convergence bounds \eqref{EQ:TOLGDUNCONS} and \eqref{eq_fista_rate} against the a-posteriori gradient bound \eqref{eq_accuracy_gradient}. 
	\textbf{(a, b):} 200 iterations of GD and FISTA on Nesterov's quadratic function. \textbf{(c, d):} GD (2,000 iterations) and FISTA (1,000 iterations) on a 1D denoising problem.}
	\label{fig_accuracy_bounds}
\end{figure}

We also consider a 1D denoising problem as in \eqref{EQ:LOWER:APPROX:TV} with randomly-generated data $y\in\R^N$ (with $N=100$ pixels) as per Section \ref{sec_data_model}, $\alpha=0.3$, $\nu=\xi=10^{-3}$, and $x^*$ estimated by running $10^4$ iterations of FISTA.
Here, the problem is $\mu$-strongly convex and $L$-smooth with $\mu\approx 1$ and $L\approx 1,201$. 
We estimate the true solution $x^*$ by running FISTA for 10,000 iterations (which gives an upper bound estimate $\|x^k-x^*\|^2\leq \scn{3}{-26}$ from \eqref{eq_accuracy_gradient}).

In \figref{fig_accuracy_bounds}, we compare the true error $\|x^k-x^*\|^2$ against the a-priori linear convergence bounds \eqref{EQ:TOL} or \eqref{eq_fista_rate} with the true value of $\|x^0-x^*\|^2$, and the a-posteriori gradient bound \eqref{eq_accuracy_gradient}.
In both cases, the gradient-based bound \eqref{eq_accuracy_gradient} provides a much tighter estimate of the error, particularly for high accuracy requirements. 
Thus, in our numerical results, we terminate the lower-level solver as soon \eqref{eq_accuracy_gradient} is achieved for our desired tolerance.
The gradient-based bound has the additional advantage of not requiring an a priori estimate of $\|x^0-x^*\|$.
For comparison, in our results below we will also consider terminating GD/FISTA after a fixed number of iterations.

\section{Dynamic Accuracy DFO Algorithm} \label{sec_dfo}

\subsection{DFO Background}
Since evaluating $\hat x_i(\theta)$ in the upper-level problem \eqref{EQ:UPPER} is only possible with some error (it is computed by running an iterative process), it is not straightforward or cheap to evaluate $\partial \hat x_i(\theta)$.
Hence for solving \eqref{EQ:UPPER} we turn to DFO techniques, and specifically consider those which exploit the nonlinear least-squares problem structure.
In this section we outline a model-based DFO method for nonlinear least-squares problems \cite{Cartis2019a}, a trust-region method based on the classical (derivative-based) Gauss--Newton method \cite[Chapter 10]{Nocedal2006}.
However, these approaches are based on having access to exact function evaluations, and so we augment this with a standard approach for dynamic accuracy trust-region methods \cite[Chapter 10.6]{Conn2000}; this was previously considered for general model-based DFO methods in \cite{Conn2012}.

Here, we write the upper-level problem \eqref{EQ:UPPER} in the general form
\begin{align}
	\min_{\theta\in\R^d} f(\theta) := \frac{1}{n}\|r(\theta)\|^2 = \frac{1}{n}\sum_{i=1}^n r_i(\theta)^2, \label{eq_upper_exact}
\end{align}
where $r_i(\theta):= \|\hat{x}_i(\theta)-x_i\|$ and $r(\theta):=[r_1(\theta), \ldots, r_n(\theta)]^T$.
Without loss of generality, we do not include a regularization term $\mathcal{J}(\theta)$; we can incorporate this term by defining $r_{n+1}(\theta):=\sqrt{\mathcal{J}(\theta)}$ and then taking $r(\theta):=[r_1(\theta), \ldots, r_{n+1}(\theta)]^T$, for instance.

The upper-level objective \eqref{eq_upper_exact} assumes access to exact evaluations of the lower-level objective $r_i(\theta)$, which is not achievable in practice.
We therefore assume we only have access to inaccurate evaluations $\t{x}_i(\theta)\approx \hat{x}_i(\theta)$, giving $\t{r}_i(\theta):= \|\t{x}_i(\theta)-x_i\|$, $\t{r}(\theta):=[\t{r}_1(\theta), \ldots, \t{r}_n(\theta)]^T$, and $\t{f}(\theta):=\frac{1}{n}\|\t{r}(\theta)\|^2$.

Our overall algorithmic framework is based on trust-region methods, where at each iteration $k$ we construct a model $m^k$ for the objective which we hope is accurate in a neighborhood of our current iterate $\theta^k$.
Simultaneously we maintain a trust-region radius $\Delta^k>0$, which tracks the size of the neighborhood of $\theta^k$ where we expect $m^k$ to be accurate.
Our next iterate is determined by minimizing the model $m^k$ within a ball of size $\Delta^k$ around $\theta^k$.

Usually $m^k$ is taken to be a quadratic function (e.g.~a second-order Taylor series for $f$ about $\theta^k$).
However here we use the least-squares problem structure \eqref{eq_upper_exact} and construct a linear model
\begin{align}
	r(\theta^k+s) \approx \t r(\theta^k+s) \approx M^k(s) := \t{r}(\theta^k) + J^k s, \label{EQ:VECTOR_MODEL}
\end{align}
where $\t{r}(\theta^k)$ is our approximate evaluation of $r(\theta^k)$  and $J^k\in\R^{n\times d}$ is a matrix approximating $\partial r(\theta^k)^T$.
We construct $J^k$ by interpolation: we maintain an interpolation set $z^0,\ldots,z^d\in\R^d$ (where $z^0:=\theta^k$ at each iteration $k$) and choose $J^k$ so that
\begin{align}
	M^k(z^t-\theta^k)=\t{r}(z^t), \qquad \forall t=1,\ldots,d.
\end{align}
\revision{This condition ensures that our linear model $M^k$ exactly interpolates $\t{r}$ at our interpolation points $z^t$ (i.e.~the second approximation in \eqref{EQ:VECTOR_MODEL} is exact for each $s=z^t-\theta^k$).}
We can therefore find $J^k$ by solving the $d\times d$ linear system (with $n$ right-hand sides):
\begin{align}
	\begin{bmatrix}(z^1-\theta^k) & \cdots & (z^d-\theta^k)\end{bmatrix}^T g^k_i = \begin{bmatrix}\t{r}_i(z^1)-\t{r}_i(\theta^k) \\ \vdots \\ \t{r}_i(z^d)-\t{r}_i(\theta^k) \end{bmatrix}, \label{EQ:INTERP_SYSTEM}
\end{align}
for all $i=1,\ldots,n$, where $g^k_i\in\R^d$ is the $i$-th row of $J^k$.
The model $M^k$ gives a natural quadratic model for the full objective $f$:
\begin{align}
	f(\theta^k+s) \approx m^k(s) := \frac{1}{n}\|M^k(s)\|^2 &= \t{f}(\theta^k) + (g^k)^T s \nonumber \\
	& \quad+ \frac{1}{2}s^T H^k s, \label{EQ:SCALAR_MODEL}
\end{align}
where $g^k:= \frac{2}{n} (J^k)^T \t{r}(\theta^k)$ and $H^k:=\frac{2}{n}(J^k)^T J^k$.
We compute a tentative step $s^k$ as a(n approximate) minimizer of the trust-region subproblem
\begin{align}
	\min_{s\in\R^d} m^k(s), \qquad \text{subject to} \qquad \|s\| \leq \Delta^k. \label{EQ:TRS}
\end{align}
There are a variety of efficient algorithms for computing $s^k$ \cite[Chapter 7]{Conn2000}.
Finally, we evaluate $\t{f}(\theta^k+s^k)$ and decide whether to accept or reject the step (i.e.~set $\theta^{k+1}=\theta^k+s^k$ or $\theta^{k+1}=\theta^k$) depending on the ratio
\begin{align}
	\rho^k = \frac{\text{actual reduction}}{\text{predicted reduction}} := \frac{f(\theta^k)-f(\theta^k+s^k)}{m^k(0)-m^k(s^k)}. \label{EQ:RHO_TRUE}
\end{align}
Although we would like to accept/reject using $\rho^k$, in reality we only observe the approximation
\begin{align}
	\t{\rho}^k := \frac{\t{f}(\theta^k)-\t{f}(\theta^k+s^k)}{m^k(0)-m^k(s^k)}, \label{EQ:RHO_NOISY}
\end{align}
and so we use this instead.

This gives us the key components of a standard trust-region algorithm.
We have two extra considerations in our context: the accuracy of our derivative-free model \eqref{EQ:SCALAR_MODEL} and the lack of exact evaluations of the objective.

Firstly, we require a procedure to verify if our model \eqref{EQ:SCALAR_MODEL} is sufficiently accurate inside the trust-region, and if not, modify the model to ensure its accuracy.
We discuss this in Section \ref{sec_geometry}.
The notion of `sufficiently accurate' we use here is that $m^k$ is as good an approximation to $f$ as a first-order Taylor series (up to constant factors), which we call `fully linear'.\footnote{If $f$ is $L$-smooth then the Taylor series $m^k (s)=f(\theta^k)+\nabla f(\theta^k)^T s$ is fully linear with $\kappaef=L/2$ and $\kappaeg=L$ for all $\Delta^k$.}

\begin{definition}[Fully linear model]
	The model $m^k$ \eqref{EQ:SCALAR_MODEL} is a fully linear model for $f(\theta)$ in $B(\theta^k,\Delta^k)$ if there exist constants $\kappaef,\kappaeg>0$ (independent of $\theta^k$ and $\Delta^k$) such that
	\begin{align}
		|f(\theta^k+s)-m^k(s)| &\leq \kappaef(\Delta^k)^2, \\
		\|\nabla f(\theta^k+s) - \nabla m^k(s)\| &\leq \kappaeg\Delta^k,
	\end{align}
	for all $\|s\|\leq\Delta^k$.
\end{definition}

Secondly, we handle the inaccuracy in objective evaluations by ensuring $\t{f}(\theta^k)$ and $\t{f}(\theta^k+s^k)$ are evaluated to a sufficiently high accuracy when we compute $\t{\rho}^k$ \eqref{EQ:RHO_NOISY}.
Specifically, suppose we know that $|\t{f}(\theta^k)-f(\theta^k)|\leq\delta^k$ and $|\t{f}(\theta^k+s^k)-f(\theta^k+s^k)|\leq\delta^k_+$ for some accuracies $\delta^k$ and $\delta^k_+$.
\revision{Throughout, we use $\delta^k$ and $\delta^k_+$ to refer to the accuracies with which $\t{f}(\theta^k)$ and $\t{f}(\theta^k+s^k)$ have been evaluated, in the sense above.}
Before we compute $\t{\rho}^k$, we first ensure that
\begin{align}
	\max(\delta^k, \delta^k_+) \leq \eta_1^\prime \left[m^k(0)-m^k(s^k)\right], \label{EQ:MIN_ACCURACY}
\end{align}
where $\eta_1^{\prime}>0$ is an algorithm parameter.
We achieve this by running the lower-level solver for a sufficiently large number of iterations.

The full upper-level algorithm is given in \algref{alg_dynamic_accuracy_dfo}; it is similar to the approach in \cite{Conn2012}, the DFO method \cite[Algorithm 10.1]{Conn2009}---adapted for the least-squares problem structure---and the (derivative-based) dynamic accuracy trust-region method \cite[Algorithm 10.6.1]{Conn2000}.

\begin{algorithm}[t]
	{\small 
	\begin{algorithmic}[1]
		\Statex \textbf{Inputs:} Starting point $\theta^0\in\R^n$, initial trust region radius $0<\Delta^0\leq \Delta_{\max}$. 
		\vspace{0.2em}
		\Statex \textbf{Parameters:} strictly positive values $\Delta_{\max}, \gamma_{\rm dec},\gamma_{\rm inc}, \eta_1, \eta_2, \eta_1^{\prime}, \epsilon$ satisfying $\gamma_{\rm dec}<1<\gamma_{\rm inc}$, $\eta_1 \leq \eta_2 < 1$, and $\eta_1^{\prime}<\min(\eta_1,1-\eta_2)/2$.
		\vspace{0.5em}
		\State Select an arbitrary interpolation set and construct $m^0$ \eqref{EQ:SCALAR_MODEL}.
		\For{$k=0,1,2,\ldots$}
		    \Repeat
		        \State Evaluate $\t{f}(\theta^k)$ to sufficient accuracy that \eqref{EQ:MIN_ACCURACY} holds with $\eta_1^{\prime}$ (using $s^k$ from the previous iteration of this inner repeat/until loop). Do nothing in the first iteration of this repeat/until loop.
			    \If{$\|g^k\| \leq \epsilon$} \label{ln_crit_start}
			    \State By replacing $\Delta^k$ with $\gamma_{\rm dec}^{i}\Delta^k$ for $i=0,1,2,\ldots$, find $m^k$ and $\Delta^k$ such that $m^k$ is fully linear in $B(\theta^k,\Delta^k)$ and $\Delta^k \leq \|g^k\|$. \hfill \textit{[criticality phase]}
			    \EndIf
			    \State Calculate $s^k$ by (approximately) solving \eqref{EQ:TRS}.
			\Until{the accuracy in the evaluation of $\t{f}(\theta^k)$ satisfies \eqref{EQ:MIN_ACCURACY} with $\eta_1^{\prime}$} \hfill \textit{[accuracy phase]}
			\State Evaluate $\t{r}(\theta^k+s^k)$ so that \eqref{EQ:MIN_ACCURACY} is satisfied with $\eta_1^{\prime}$ for $\t{f}(\theta^k+s^k)$, and calculate $\t{\rho}^k$ \eqref{EQ:RHO_NOISY}. \label{ln_accuracy_end}
			\State Set $\theta^{k+1}$ and $\Delta^{k+1}$ as:
			\begin{align}
				\theta^{k+1} = \begin{cases} \theta^k+s^k, & \text{$\t{\rho}^k\geq\eta_2$, or $\t{\rho}^k\geq\eta_1$ and $m^k$} \\ & \text{fully linear in $B(\theta^k,\Delta^k)$,} \\ \theta^k, & \text{otherwise}, \end{cases}
			\end{align}
			and
			\begin{align}
				\Delta^{k+1} = \begin{cases} \min(\gamma_{\rm inc}\Delta^k,\Delta_{\max}), & \text{$\t{\rho}^k\geq\eta_2$, } \\ \Delta^k, & \text{$\t{\rho}^k<\eta_2$ and $m^k$ not } \\
				& \text{fully linear in $B(\theta^k,\Delta^k)$,} \\ \gamma_{\rm dec}\Delta^k, & \text{otherwise}. \end{cases} \label{eq_delta_update}
			\end{align}
			\State If $\theta^{k+1}=\theta^k+s^k$, then build $m^{k+1}$ by adding $\theta^{k+1}$ to the interpolation set (removing an existing point).
			Otherwise, set $m^{k+1}=m^k$ if $m^k$ is fully linear in $B(\theta^k,\Delta^k)$, or form $m^{k+1}$ by making $m^k$ fully linear in $B(\theta^{k+1},\Delta^{k+1})$.
		\EndFor
	\end{algorithmic}
	} 
	\caption{Dynamic accuracy DFO algorithm for \eqref{eq_upper_exact}.}
	\label{alg_dynamic_accuracy_dfo}
\end{algorithm}

Our main convergence result is the below.

\begin{theorem} \label{thm_iter_complexity}
	Suppose Assumptions \ref{ASS:SMOOTHNESS} and \ref{ass_cauchy_bdd_hess} hold.
	Then if 
	\begin{align}
		K &> \left\lfloor\left(2 + 2\frac{\log\gamma_{\rm inc}}{|\log \gamma_{\rm dec}|}\right)\frac{2 (\kappaeg+1) f(\theta^0)}{(\eta_1-2\eta_1^{\prime})\epsilon\Delta_{\min}}\right. \nonumber \\
		& \qquad\qquad\qquad\qquad\qquad \left. + 2\frac{\log(\Delta^0/\Delta_{\min})}{|\log \gamma_{\rm dec}|}\right\rfloor, \label{eq_kepsilon}
	\end{align}
	with $\kappaeg$ and $\Delta_{\min}$ given by Lemmas \ref{lem_fully_linear} and \ref{lem_delta_min} respectively, then $\min_{k=0,\ldots,K}\|\nabla f(\theta^k)\| < \epsilon$.
\end{theorem}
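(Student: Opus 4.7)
The plan is to follow the standard trust-region worst-case complexity template (as in \cite{Cartis2019a} and \cite[Ch.~10.6]{Conn2000}), adapted to account simultaneously for (i) the DFO model accuracy controlled by the fully-linear property, and (ii) the inexact objective evaluations controlled by \eqref{EQ:MIN_ACCURACY}. Throughout I argue by contradiction: assume $\|\nabla f(\theta^k)\| \geq \epsilon$ for every $k = 0,\ldots,K$, and show this forces $K$ to be no larger than the right-hand side of \eqref{eq_kepsilon}.

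First I would classify the iterations. Call iteration $k$ \emph{very successful} if $\t\rho^k \geq \eta_2$ (so $\Delta^{k+1} = \min(\gamma_{\rm inc}\Delta^k, \Delta_{\max})$ and the step is accepted), \emph{successful but not very successful} if $\eta_1 \leq \t\rho^k < \eta_2$ with $m^k$ fully linear (step accepted, $\Delta^{k+1}=\Delta^k$), and \emph{unsuccessful} otherwise (either step rejected, or accepted with $\Delta^{k+1}=\gamma_{\rm dec}\Delta^k$). Denote the counts over $k=0,\ldots,K$ by $|\mathcal{S}_{vs}|$, $|\mathcal{S}_{s}|$, $|\mathcal{U}|$. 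Next, using the full-linearity bound $\|\nabla f(\theta^k)-g^k\|\leq \kappaeg\Delta^k$ (invoked after the criticality phase has forced $m^k$ to be fully linear whenever $\|g^k\|\leq\epsilon$), together with the hypothesis $\|\nabla f(\theta^k)\|\geq\epsilon$, one gets the working lower bound $\|g^k\| \geq \epsilon/(1+\kappaeg)$ on every iteration, because otherwise the criticality phase would force $\Delta^k \leq \|g^k\|$ and thus $\|\nabla f(\theta^k)\| \leq (1+\kappaeg)\|g^k\| < \epsilon$.

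Then I would use the standard Cauchy decrease bound on $m^k(0)-m^k(s^k)$: it is at least a constant times $\|g^k\| \min(\|g^k\|/\|H^k\|,\Delta^k)$, which, combined with Lemma \ref{lem_delta_min} (giving $\Delta^k \geq \Delta_{\min}$) and the uniform bound on $\|H^k\|$ from Assumption~\ref{ass_cauchy_bdd_hess}, yields
\begin{align}
    m^k(0)-m^k(s^k) \geq \tfrac{1}{2}\,\frac{\epsilon}{1+\kappaeg}\,\Delta_{\min}.
\end{align}
For successful iterations (including very successful), the key calculation is the now-routine accounting between $\rho^k$ and $\t\rho^k$: since $|\t f(\theta^k)-f(\theta^k)|\leq\delta^k$ and $|\t f(\theta^k+s^k)-f(\theta^k+s^k)|\leq\delta^k_+$ with both bounded by $\eta_1'[m^k(0)-m^k(s^k)]$ by \eqref{EQ:MIN_ACCURACY}, we have $|\rho^k - \t\rho^k| \leq 2\eta_1'$, so on any successful iteration $\rho^k \geq \eta_1 - 2\eta_1' > 0$ by the parameter choice. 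Hence
\begin{align}
    f(\theta^k) - f(\theta^{k+1}) \geq (\eta_1 - 2\eta_1')\,\tfrac{1}{2}\,\frac{\epsilon\,\Delta_{\min}}{1+\kappaeg}.
\end{align}
Summing over all successful iterations and using $f\geq 0$ with the telescoping bound $\sum (f(\theta^k)-f(\theta^{k+1})) \leq f(\theta^0)$ gives
\begin{align}
    |\mathcal{S}_{vs}|+|\mathcal{S}_{s}| \leq \frac{2(1+\kappaeg)\,f(\theta^0)}{(\eta_1-2\eta_1')\,\epsilon\,\Delta_{\min}}.
\end{align}

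Finally I would bound $|\mathcal{U}|$ by tracking $\Delta^k$. Unsuccessful iterations multiply $\Delta^k$ by $\gamma_{\rm dec}<1$ and very successful ones multiply it by at most $\gamma_{\rm inc}$ (with $\Delta^k\leq\Delta_{\max}$); monotonicity of the logarithm and the lower bound $\Delta^k \geq \Delta_{\min}$ give
\begin{align}
    \Delta_{\min} \leq \Delta^{K+1} \leq \Delta^0\,\gamma_{\rm inc}^{|\mathcal{S}_{vs}|}\,\gamma_{\rm dec}^{|\mathcal{U}|},
\end{align}
so $|\mathcal{U}| \leq \frac{\log(\Delta^0/\Delta_{\min})}{|\log\gamma_{\rm dec}|} + \frac{\log\gamma_{\rm inc}}{|\log\gamma_{\rm dec}|}\,|\mathcal{S}_{vs}|$. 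Adding the three counts and using $K+1 = |\mathcal{S}_{vs}|+|\mathcal{S}_{s}|+|\mathcal{U}|$ yields exactly the bound in \eqref{eq_kepsilon}, contradicting the assumption that $K$ is strictly larger.

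The part I expect to require the most care is the interaction between the criticality phase and the dynamic-accuracy inner repeat/until loop: one must verify that both inner loops terminate finitely, so that each outer index $k$ corresponds to a well-defined iterate, and that the resulting $\Delta^k$ remains bounded below by some $\Delta_{\min}>0$ on the event $\|\nabla f(\theta^k)\| \geq \epsilon$. This is precisely the content of Lemma \ref{lem_delta_min}, and the proof hinges on the mismatch estimate $|\rho^k-\t\rho^k|\leq 2\eta_1'$ being compatible with the parameter condition $\eta_1' < \min(\eta_1,1-\eta_2)/2$ so that fully-linear models with sufficiently small $\Delta^k$ force the iteration to be (very) successful, preventing further radius reduction.
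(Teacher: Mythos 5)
Your overall template (argue by contradiction, lower-bound $\|g^k\|$ via full linearity, combine the Cauchy decrease with $\Delta^k\geq\Delta_{\min}$ and the mismatch bound $|\t{\rho}^k-\rho^k|\leq 2\eta_1^{\prime}$, telescope over successful iterations, then track the radius) is the same as the paper's, and your bound on the number of successful iterations reproduces Proposition~\ref{prop_succ_complexity}. The genuine gap is in your treatment of the remaining iterations. Your class $\mathcal{U}$ (``otherwise'') contains the \emph{model-improving} iterations, i.e.~those with $\t{\rho}^k<\eta_2$ and $m^k$ not fully linear; by the update rule \eqref{eq_delta_update} these leave the trust-region radius \emph{unchanged} --- they do not multiply it by $\gamma_{\rm dec}$. (You also assert that iterations with $\eta_1\leq\t{\rho}^k<\eta_2$ and $m^k$ fully linear keep $\Delta^{k+1}=\Delta^k$; in fact \eqref{eq_delta_update} decreases the radius there, but that slip is harmless for the inequality directions.) Consequently your key inequality $\Delta_{\min}\leq\Delta^{K+1}\leq\Delta^0\,\gamma_{\rm inc}^{|\mathcal{S}_{vs}|}\,\gamma_{\rm dec}^{|\mathcal{U}|}$ is false whenever $\mathcal{U}$ contains a model-improving iteration: since $\gamma_{\rm dec}<1$, each such iteration contributes a factor $1$ rather than $\gamma_{\rm dec}$ to the product, so your right-hand side underestimates $\Delta^{K+1}$. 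Radius tracking can only bound the number of iterations on which the radius is actually decreased; it says nothing about model-improving iterations, which is precisely why they need separate handling in a DFO (as opposed to derivative-based) trust-region analysis.

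To close the gap you need the additional combinatorial argument the paper uses: an iteration immediately following a model-improving iteration cannot itself be model-improving (the model has just been made fully linear), hence $|\mathcal{M}_{\epsilon}|\leq|\mathcal{S}_{\epsilon}|+|\mathcal{U}_{\epsilon}|$, where $\mathcal{U}_{\epsilon}$ now denotes only the genuinely unsuccessful iterations ($\t{\rho}^k<\eta_1$ with $m^k$ fully linear), which are the ones that decrease the radius and are therefore controlled by radius tracking. This extra count is exactly the origin of the factors of $2$ in \eqref{eq_kepsilon}. Indeed, if your radius-tracking step were valid, your accounting would give a bound of the form $\left(1+\frac{\log\gamma_{\rm inc}}{|\log\gamma_{\rm dec}|}\right)|\mathcal{S}_{\epsilon}|+\frac{\log(\Delta^0/\Delta_{\min})}{|\log\gamma_{\rm dec}|}$, not the stated one, so the claim that your count ``yields exactly'' \eqref{eq_kepsilon} cannot be correct --- the discrepancy is exactly the unbounded model-improving iterations.
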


We summarize \thmref{thm_iter_complexity} as follows, noting that the iteration and evaluation counts match the standard results for model-based DFO (e.g.~\cite{Cartis2019a,Garmanjani2016}).

\begin{corollary} \label{cor_complexity}
	Suppose the assumptions of \thmref{thm_iter_complexity} hold.
	Then \algref{alg_dynamic_accuracy_dfo} is globally convergent; i.e.
	\begin{align}
	    \lim_{k\to\infty}\|\nabla f(\theta^k)\|=0. \label{eq_liminf}
	\end{align}
	Also, if $\epsilon\in(0,1]$, then the number of iterations before $\|\nabla f(\theta^k)\|<\epsilon$ for the first time is at most $\bigO(\kappa^3\epsilon^{-2})$ and the number of evaluations of $\t{r}(\theta)$ is at most $\bigO(d\kappa^3\epsilon^{-2})$, where $\kappa:=\max(\kappaef, \kappaeg, \kappa_H)$.\footnote{If we have to evaluate $\t{r}(\theta)$ at different accuracy levels as part of the accuracy phase, we count this as one evaluation, since we continue solving the corresponding lower-level problem from the solution from the previous, lower accuracy evaluation.}
\end{corollary}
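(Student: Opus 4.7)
The plan is to derive all three claims of the corollary from \thmref{thm_iter_complexity} together with the asymptotic behaviour of the constants $\kappaeg$ and $\Delta_{\min}$ supplied by Lemmas \ref{lem_fully_linear} and \ref{lem_delta_min}.

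For the iteration complexity I would trace the $\kappa$ and $\epsilon$ dependence through the explicit expression \eqref{eq_kepsilon}. The fully-linear constant $\kappaeg$ is $\bigO(\kappa)$ by the standard interpolation bound (what Lemma \ref{lem_fully_linear} will state), so the factor $\kappaeg+1$ in the numerator is $\bigO(\kappa)$. The trust-region radius lower bound $\Delta_{\min}$ from Lemma \ref{lem_delta_min} is expected to scale as $\Omega(\epsilon/\kappa^2)$, since the criticality and unsuccessful-step machinery cannot collapse $\Delta^k$ below a constant multiple of $\|g^k\|/\kappa_H$ while $\|g^k\|\geq\epsilon$, and $\kappa_H=\bigO(\kappa)$. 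Substituting into \eqref{eq_kepsilon} absorbs the additive logarithmic term and yields $K=\bigO(\kappa^3\epsilon^{-2})$. The evaluation count then follows from the observation that each outer iteration incurs at most $\bigO(d)$ fresh evaluations of $\t{r}$: one at the trial point $\theta^k+s^k$, plus up to $d$ to make the model fully linear during the criticality or model-improvement phases. By the footnote convention the accuracy phase contributes $\bigO(1)$ evaluations per iteration, giving the stated $\bigO(d\kappa^3\epsilon^{-2})$ bound.

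For the global convergence statement \eqref{eq_liminf} I would argue by contradiction. If $\liminf_{k\to\infty}\|\nabla f(\theta^k)\|>0$, choose any strictly smaller $\epsilon$; applying \thmref{thm_iter_complexity} to any sufficiently late tail of the sequence then forces $\|\nabla f(\theta^k)\|<\epsilon$ at some iterate, contradicting the choice of $\epsilon$. Hence $\liminf_{k\to\infty}\|\nabla f(\theta^k)\|=0$. Upgrading $\liminf$ to $\lim$ is standard for trust-region DFO methods and follows the template of \cite[Theorem 10.13]{Conn2009}: one bounds the number of iterations needed for $\|\nabla f(\theta^k)\|$ to travel from a small value back above any fixed threshold, then exploits the step-acceptance rules and the condition $\eta_1^{\prime}<\min(\eta_1,1-\eta_2)/2$ on the accuracy phase to derive a contradiction.

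The main obstacle I anticipate is pinning down the exact $\kappa$-dependence of $\Delta_{\min}$ in Lemma \ref{lem_delta_min}, since the $\kappa^3$ exponent in the rate hinges on the $\Delta_{\min}=\Omega(\epsilon/\kappa^2)$ scaling. A subtlety sits inside the accuracy phase itself: the repeat-until loop at lines 3--9 of \algref{alg_dynamic_accuracy_dfo} could in principle trigger many re-evaluations, but the stopping test \eqref{EQ:MIN_ACCURACY} together with $\eta_1^{\prime}<\min(\eta_1,1-\eta_2)/2$ is precisely what allows its cost to be counted as $\bigO(1)$ per iteration in the sense of the footnote, so that the per-iteration evaluation count stays $\bigO(d)$ overall.
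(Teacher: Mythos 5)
Your treatment of the iteration bound and of global convergence matches the paper's: substitute $\kappaeg = \bigO(\kappa)$ and $\Delta_{\min} = \bigO(\kappa^{-2}\epsilon)$ into \eqref{eq_kepsilon} to obtain the $\bigO(\kappa^3\epsilon^{-2})$ iteration count, and obtain \eqref{eq_liminf} from $\liminf_{k\to\infty}\|\nabla f(\theta^k)\|=0$ via the argument of \cite[Theorem 10.13]{Conn2009}.

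However, your evaluation-count argument has a genuine gap. You claim each outer iteration incurs at most $\bigO(d)$ fresh evaluations (one at the trial point plus up to $d$ for full linearity in the criticality or model-improvement phases). This is false as a per-iteration statement: the criticality phase is an inner loop, and within a \emph{single} outer iteration it may execute many inner iterations $i=1,2,\ldots$, each shrinking $\Delta^k$ by $\gamma_{\rm dec}$ and each requiring the model to be made fully linear in a strictly smaller ball---which in general demands interpolation points at new locations, i.e.\ up to $\bigO(d)$ genuinely new evaluations per inner iteration. The footnote convention does not rescue this, since it covers higher-accuracy re-evaluations at existing points, not evaluations at new points. By \lemref{lem_criticality}, the number of inner iterations within one criticality phase is bounded only by $\bigO\left(\log(\Delta_{\max}(\kappaeg+1)/\epsilon)\right)$, so your per-iteration accounting yields at best $\bigO\left(d\kappa^3\epsilon^{-2}\log(\kappa\epsilon^{-1})\right)$, a multiplicative logarithmic factor worse than the stated bound. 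The paper closes exactly this gap with an aggregate counting argument: it partitions the criticality-phase inner iterations into sets $\mathcal{C}^{U}_{\epsilon}$ (iterations $i>0$, where $\Delta^k$ is reduced) and $\mathcal{C}^{M}_{\epsilon}$ (first inner iterations where the model was not already fully linear), bounds $|\mathcal{C}^{U}_{\epsilon}|$ across \emph{all} outer iterations by the same telescoping argument on $\Delta^k$ used for $|\mathcal{U}_{\epsilon}|$ (each such inner iteration multiplies $\Delta^k$ by $\gamma_{\rm dec}$, $\Delta^k$ increases only on successful iterations, and $\Delta^k\geq\Delta_{\min}$ throughout), and bounds $|\mathcal{C}^{M}_{\epsilon}|\leq 2(|\mathcal{S}_{\epsilon}|+|\mathcal{U}_{\epsilon}|)$, the factor $2$ absorbing the at most one extra full linearization that repeated accuracy-phase re-entries can force. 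This keeps the total number of full-linearization events at $\bigO(\kappa^3\epsilon^{-2})$---the $\log(\Delta^0/\Delta_{\min})$ term enters only additively---and only then does multiplying by $\bigO(d)$ evaluations per full linearization give the stated $\bigO(d\kappa^3\epsilon^{-2})$.
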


\revision{We note that since $\theta^k \in\mathcal{B}$ and $\mathcal{B}$ is bounded from \assref{ASS:SMOOTHNESS} (and closed by continuity of $f$), then by \corref{cor_complexity} and compactness there exists a subsequence of iterates $\{\theta_{k_i}\}_{i\in\mathbb{N}}$ which converges to a stationary point of $f$. However, there are relatively few results which prove convergence of the full sequence of iterates for nonconvex trust-region methods (see \cite[Theorem 10.13]{Conn2000} for a restricted result in the derivative-based context).}

\subsection{Guaranteeing Model Accuracy} \label{sec_geometry}
As described above, we need a process to ensure that $m^k$ \eqref{EQ:SCALAR_MODEL} is a fully linear model for $f$ inside the trust region $B(\theta^k,\Delta^k)$.
For this, we need to consider the geometry of the interpolation set.

\begin{definition}
	The Lagrange polynomials of the interpolation set $\{z^0,z^1,\ldots,z^d\}$ are the linear polynomials $\ell_t$, $t=0,\ldots,d$ such that $\ell_t(z^s)=\delta_{s,t}$ for all $s,t=0,\ldots,d$.
\end{definition}

The Lagrange polynomials of $\{z^0,\ldots,z^d\}$ exist and are unique whenever the matrix in \eqref{EQ:INTERP_SYSTEM} is invertible.
The required notion of `good geometry' is given by the below definition (where small $\Lambda$ indicates better geometry).

\begin{definition}[$\Lambda$-poisedness]
	For $\Lambda>0$, the interpolation set $\{z^0,\ldots,z^d\}$ is $\Lambda$-poised in $B(\theta^k,\Delta^k)$ if $|\ell_t(\theta^k+s)|\leq\Lambda$ for all $t=0,\ldots,d$ and all $\|s\|\leq\Delta^k$.
\end{definition}

The below result confirms that, provided our interpolation set has sufficiently good geometry, and our evaluations $\t{r}(\theta^k)$ and $\t{r}(y^t)$ are sufficiently accurate, our interpolation models are fully linear.

\begin{assumption} \label{ASS:SMOOTHNESS}
	The extended level set 
	\begin{align}
		\mathcal{B} := \{z: \text{$z\in B(\theta,\Delta_{\max})$ for some $\theta$ with $f(\theta)\leq f(\theta^0)$}\},
	\end{align}
	is bounded, and $r(\theta)$ is continuously differentiable and $\partial r(\theta)$ is Lipschitz continuous with constant $L_J$ in $\mathcal{B}$.
\end{assumption}

In particular, \assref{ASS:SMOOTHNESS} implies that $r(\theta)$ and $\partial r(\theta)$ are uniformly bounded in the same region---that is, $\|r(\theta)\|\leq r_{\max}$ and $\|\partial r(\theta)\| \leq J_{\max}$ for all $\theta\in\mathcal{B}$---and $f$ \eqref{eq_upper_exact} is $L$-smooth in $\mathcal{B}$ \cite[Lemma 3.2]{Cartis2019a}.

\revision{We note that $\mathcal{B}$ in \assref{ASS:SMOOTHNESS} is bounded whenever the regularizer $\mathcal{J}$ is coercive, such as in \secref{sec_learning_mri_results}. This may also be replaced by the weaker assumption that $r$ and $\partial r$ are uniformly bounded on $\mathcal{B}$ (and $\mathcal{B}$ need not be bounded) \cite[Assumption 3.1]{Cartis2019a}, and there are theoretical results which give this for some inverse problems in image restoration \cite{DeLosReyes2016}. In our numerical experiments, we enforce upper and lower bounds on $\theta$, which also yields the uniform boundedness of $r$ and $\partial r$. Also, we note that if $r_i(\theta)=\|\hat{x}_i(\theta)-x_i\|$ is not itself $L$-smooth, we can instead treat each entry of $\hat{x}_i(\theta)-x_i$ as a separate term in \eqref{eq_upper_exact}.}

\begin{lemma} \label{lem_fully_linear}
	Suppose \assref{ASS:SMOOTHNESS} holds and $\Delta^k\leq\Delta_{\max}$. 
	If the interpolation set $\{z^0:=\theta^k,z^1,\ldots,z^d\}$ is $\Lambda$-poised in $B(\theta^k,\Delta^k)$ and for each evaluation $t=0,\ldots,d$ and each $i=1,\ldots,n$ we have
	\begin{align}
		\|\t{x}_i(z^t)-\hat{x}_i(z^t)\|\leq c(\Delta^k)^2, \label{eq_desired_accuracy}
	\end{align}
	for some $c>0$, then the corresponding models $M^k$ \eqref{EQ:VECTOR_MODEL} and $m^k$ \eqref{EQ:SCALAR_MODEL} are fully linear models for $r(\theta)$ and $f(\theta)$ respectively.
\end{lemma}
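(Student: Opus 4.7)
The plan is to first translate the hypothesis on the lower-level solution error into a corresponding accuracy bound on the residual values $\t{r}_i(z^t)$, then invoke a standard interpolation argument to conclude that the vector model $M^k$ is fully linear for $r$, and finally pass from the vector model to the scalar least-squares model $m^k$ using bilinear identities together with the uniform bounds granted by \assref{ASS:SMOOTHNESS}.

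\textbf{Step 1: noise bound on residuals.} By the reverse triangle inequality,
\begin{align*}
|\t{r}_i(z^t) - r_i(z^t)|
&= \bigl|\,\|\t{x}_i(z^t)-x_i\| - \|\hat{x}_i(z^t)-x_i\|\,\bigr| \\
&\leq \|\t{x}_i(z^t)-\hat{x}_i(z^t)\| \leq c(\Delta^k)^2,
\end{align*}
for every $t=0,\ldots,d$ and $i=1,\ldots,n$. Thus the interpolation data used to build $J^k$ via \eqref{EQ:INTERP_SYSTEM} is a perturbation, of size $O((\Delta^k)^2)$, of the true values of the smooth function $r_i$.

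\textbf{Step 2: full linearity of the vector model.} Under \assref{ASS:SMOOTHNESS}, each component $r_i$ is continuously differentiable with $\partial r$ Lipschitz on $\mathcal{B}$, so componentwise we are in the setting of linear interpolation of a $C^{1,1}$ function from noisy samples on a $\Lambda$-poised set. The standard interpolation error bounds (see, e.g., the derivation of fully linear models in \cite{Conn2009} and its least-squares adaptation in \cite{Cartis2019a}) then yield constants $\kappa_{\rm ef}^{(r)}, \kappa_{\rm eg}^{(r)}>0$ depending only on $\Lambda$, $L_J$, $c$, and $\Delta_{\max}$, such that for all $\|s\|\leq\Delta^k$,
\begin{align*}
\|M^k(s) - r(\theta^k+s)\| &\leq \kappa_{\rm ef}^{(r)}(\Delta^k)^2, \\
\|J^k - \partial r(\theta^k+s)^T\| &\leq \kappa_{\rm eg}^{(r)}\Delta^k.
\end{align*}
This is exactly the vector-valued version of fully linearity for $M^k$ as a model of $r$. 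The only subtlety here is that the noise level $c(\Delta^k)^2$ in Step 1 matches the $O((\Delta^k)^2)$ order already produced by interpolation of a smooth function, so adding it does not degrade the rate.

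\textbf{Step 3: passage to the scalar model.} Write
\begin{align*}
f(\theta^k+s) - m^k(s) &= \tfrac{1}{n}\bigl(\|r(\theta^k+s)\|^2 - \|M^k(s)\|^2\bigr) \\
&= \tfrac{1}{n}\bigl(r(\theta^k+s)-M^k(s)\bigr)^T \\ & \quad \times \bigl(r(\theta^k+s)+M^k(s)\bigr),
\end{align*}
and, using $\nabla m^k(s) = \tfrac{2}{n}(J^k)^T M^k(s)$ together with $\nabla f(\theta)=\tfrac{2}{n}\partial r(\theta)^T r(\theta)$,
\begin{align*}
\nabla f(\theta^k+s) - \nabla m^k(s) &= \tfrac{2}{n}\bigl(\partial r(\theta^k+s)^T - J^k\bigr) r(\theta^k+s) \\
&\quad + \tfrac{2}{n}(J^k)^T\bigl(r(\theta^k+s) - M^k(s)\bigr).
\end{align*}
Since $\|r\|\leq r_{\max}$ and $\|\partial r\|\leq J_{\max}$ on $\mathcal{B}$ by \assref{ASS:SMOOTHNESS}, and since the Step 2 bounds force $\|J^k\|\leq J_{\max}+\kappa_{\rm eg}^{(r)}\Delta_{\max}$ and $\|M^k\|\leq r_{\max}+\kappa_{\rm ef}^{(r)}\Delta_{\max}^2$, substituting the Step 2 estimates into the two displays above immediately yields constants $\kappaef,\kappaeg$ with
\begin{align*}
|f(\theta^k+s)-m^k(s)| &\leq \kappaef(\Delta^k)^2, \\
\|\nabla f(\theta^k+s)-\nabla m^k(s)\| &\leq \kappaeg\Delta^k,
\end{align*}
which is full linearity of $m^k$ for $f$.

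\textbf{Expected difficulty.} The conceptual content is entirely in Step 2, and the only real obstacle is bookkeeping: verifying that the classical interpolation-error analysis for a $\Lambda$-poised set of $d+1$ points tolerates an additive $O((\Delta^k)^2)$ perturbation of the sampled values without changing the rate. This is by now standard in the DFO literature, but one has to track how $\Lambda$, $L_J$, $c$, and $\Delta_{\max}$ enter the constants; Step 1 and Step 3 are short algebraic manipulations once the vector-valued fully linear estimate is in hand.
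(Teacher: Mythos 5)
Your proposal is correct and follows essentially the same route as the paper: the paper's proof consists exactly of your Step 1 (the reverse triangle inequality bound $|\t{r}_i(z^t)-r_i(z^t)| \leq \|\t{x}_i(z^t)-\hat{x}_i(z^t)\| \leq c(\Delta^k)^2$, aggregated to $\|\t{r}(z^t)-r(z^t)\|\leq c\sqrt{n}(\Delta^k)^2$) followed by a citation of \cite[Lemma 3.3]{Cartis2019a}, whose internal content is precisely what you unpack in your Steps 2 and 3 (noisy interpolation on a $\Lambda$-poised set preserves the fully linear rates, then the bilinear decomposition passes from the vector model $M^k$ to the scalar model $m^k$ using the uniform bounds $r_{\max}$, $J_{\max}$ from \assref{ASS:SMOOTHNESS}). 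The only difference is presentational: you expand the cited lemma's argument, while the paper delegates it.
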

\begin{proof}
	This is a straightforward extension of \cite[Lemma 3.3]{Cartis2019a}, noting that
	\begin{align}
		|\t{r}_i(z^t)-r_i(z^t)| \leq \|\t{x}_i(z^t)-\hat{x}_i(z^t)\|, \qquad \forall i=1,\ldots,n, \label{EQ:TMP1}
	\end{align}
	and so \eqref{eq_desired_accuracy} gives $\|\t{r}(z^t) - r(z^t)\| \leq c \sqrt{n} (\Delta^k)^2$ for all $t=0,\ldots,d$.
\end{proof}

We conclude by noting that for any $\Lambda>1$ there are algorithms available to determine if a set is $\Lambda$-poised, and if it is not, change some interpolation points to make it so; details may be found in \cite[Chapter 6]{Conn2009}, for instance.

\subsection{Lower-Level Objective Evaluations}
We now consider the accuracy requirements that \algref{alg_dynamic_accuracy_dfo} imposes on our lower-level objective evaluations.
In particular, we require the ability to satisfy \eqref{EQ:MIN_ACCURACY}, which imposes requirements on the error in the calculated $\t{f}$, rather than the lower-level evaluations $\t{r}$.
The connection between errors in $\t{r}$ and $\t{f}$ is given by the below result.

\begin{lemma} \label{lem_inexact_eval_accuracy}
	Suppose we compute $\t{x}_i(\theta)$ satisfying $\|\t{x}_i(\theta)-\hat{x}_i(\theta)\|\leq\delta_x$ for all $i=1,\ldots,n$.
	Then we have
	\begin{align}
		|\t{f}(\theta)-f(\theta)| &\leq 2\sqrt{\t{f}(\theta)}\: \delta_x + \delta_x^2   \nonumber \\ &\text{and} \quad |\t{f}(\theta)-f(\theta)| \leq 2\sqrt{f(\theta)}\: \delta_x + \delta_x^2. \label{EQ:DELTA_BOUND1}
	\end{align}
	Moreover, if $\|\t{x}_i(\theta)-\hat{x}_i(\theta)\| \leq \sqrt{\t{f}(\theta)+\delta_f} - \sqrt{\t{f}(\theta)}$ for $i=1,\ldots,n$, then $|\t{f}(\theta)-f(\theta)|\leq\delta_f$.
\end{lemma}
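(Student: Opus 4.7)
The plan is to reduce everything to a single scalar inequality on the square-root scale and then expand. The key observation is that $\sqrt{f(\theta)} = \|r(\theta)\|/\sqrt{n}$ and $\sqrt{\t{f}(\theta)} = \|\t{r}(\theta)\|/\sqrt{n}$, so the problem essentially becomes one of controlling the difference of two Euclidean norms.

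First, I would invoke the reverse triangle inequality componentwise: since $r_i(\theta) = \|\hat{x}_i(\theta)-x_i\|$ and $\t{r}_i(\theta) = \|\t{x}_i(\theta)-x_i\|$,
\begin{equation*}
    |\t{r}_i(\theta) - r_i(\theta)| \leq \|\t{x}_i(\theta) - \hat{x}_i(\theta)\| \leq \delta_x,
\end{equation*}
and summing squares gives $\|\t{r}(\theta)-r(\theta)\| \leq \sqrt{n}\,\delta_x$. Applying the reverse triangle inequality once more at the vector level yields
\begin{equation*}
    \bigl|\sqrt{\t{f}(\theta)} - \sqrt{f(\theta)}\bigr| = \tfrac{1}{\sqrt{n}}\bigl|\|\t{r}(\theta)\|-\|r(\theta)\|\bigr| \leq \delta_x.
\end{equation*}

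Next, I would factor the difference of squares
\begin{equation*}
    |\t{f}(\theta)-f(\theta)| = \bigl|\sqrt{\t{f}(\theta)}-\sqrt{f(\theta)}\bigr|\,\bigl(\sqrt{\t{f}(\theta)}+\sqrt{f(\theta)}\bigr) \leq \delta_x\bigl(\sqrt{\t{f}(\theta)}+\sqrt{f(\theta)}\bigr).
\end{equation*}
Using the square-root bound in the form $\sqrt{f(\theta)} \leq \sqrt{\t{f}(\theta)}+\delta_x$ gives the first inequality of \eqref{EQ:DELTA_BOUND1}, and using the reverse direction $\sqrt{\t{f}(\theta)} \leq \sqrt{f(\theta)}+\delta_x$ gives the second. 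Neither step requires anything beyond elementary manipulation, so I do not anticipate any serious obstacle; the only subtlety is choosing which side of the square-root inequality to substitute in order to hit each form of the bound separately.

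For the "moreover" part, I would set $\delta_x := \sqrt{\t{f}(\theta)+\delta_f} - \sqrt{\t{f}(\theta)}$, which is exactly the hypothesized pointwise bound, and plug into the first inequality of \eqref{EQ:DELTA_BOUND1}. Writing $a := \sqrt{\t{f}(\theta)}$ and $b := \sqrt{\t{f}(\theta)+\delta_f}$, one has
\begin{equation*}
    2a(b-a) + (b-a)^2 = b^2 - a^2 = \delta_f,
\end{equation*}
so the composite bound telescopes to $|\t{f}(\theta)-f(\theta)| \leq \delta_f$, as required. The only thing to check is that the inverse map $\delta_f \mapsto \sqrt{\t{f}+\delta_f}-\sqrt{\t{f}}$ is the right accuracy request to issue to the lower-level solver, which is exactly how the statement is phrased.
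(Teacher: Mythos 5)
Your proof is correct, and it takes a somewhat different route from the paper's. The paper stays on the squared scale: setting $\epsilon(\theta):=\t{r}(\theta)-r(\theta)$, it expands $f(\theta)=\frac{1}{n}\|\t{r}(\theta)-\epsilon(\theta)\|^2=\t{f}(\theta)-\frac{2}{n}\epsilon(\theta)^T\t{r}(\theta)+\frac{1}{n}\|\epsilon(\theta)\|^2$, bounds the cross term by Cauchy--Schwarz together with $\|\epsilon(\theta)\|\leq\sqrt{n}\,\|\epsilon(\theta)\|_{\infty}\leq\sqrt{n}\,\delta_x$, and then repeats the whole computation with $\t{f}(\theta)=\frac{1}{n}\|r(\theta)+\epsilon(\theta)\|^2$ to obtain the second half of \eqref{EQ:DELTA_BOUND1}. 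You instead work on the square-root scale: the same componentwise estimate (the paper's \eqref{EQ:TMP1}) combined with the reverse triangle inequality gives the single scalar inequality $|\sqrt{\t{f}(\theta)}-\sqrt{f(\theta)}|\leq\delta_x$, and both halves of \eqref{EQ:DELTA_BOUND1} then follow by factoring $|\t{f}(\theta)-f(\theta)|$ as a difference of squares and substituting whichever direction of that inequality is needed. The two arguments rest on the same key fact and produce identical bounds (your step $\|\t{r}(\theta)\|+\|r(\theta)\|\leq 2\|\t{r}(\theta)\|+\sqrt{n}\,\delta_x$ reproduces the paper's Cauchy--Schwarz estimate term by term), but yours buys symmetry---one scalar inequality yields both halves, with no need to rerun the expansion with the roles of $r$ and $\t{r}$ swapped---and it makes the ``moreover'' claim a transparent telescoping identity $2a(b-a)+(b-a)^2=b^2-a^2=\delta_f$, which the paper compresses into ``follows immediately from the first part.'' The paper's inner-product expansion, for its part, is the more direct computation and avoids the (minor) subtlety of choosing which side of the square-root inequality to substitute at each step.
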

\begin{proof}
	Letting $\epsilon(\theta):=\t{r}(\theta)-r(\theta)$, we have
    \begin{align}
		f(\theta) &= \frac{1}{n}\|\t{r}(\theta)-\epsilon(\theta)\|^2, \\
		&= \t{f}(\theta) - \frac{2}{n}\epsilon(\theta)^T \t{r}(\theta) + \frac{1}{n}\|\epsilon(\theta)\|^2,
    \end{align}
    and hence
    \begin{align}
		|f(\theta)-\t{f}(\theta)| &\leq \frac{2}{n}\|\epsilon(\theta)\| \|\t{r}(\theta)\| + \frac{1}{n}\|\epsilon(\theta)\|^2, \\
		&\leq \frac{2}{n}\sqrt{n}\|\epsilon(\theta)\|_{\infty}\sqrt{n \t{f}(\theta)} + \|\epsilon(\theta)\|_{\infty}^2,
    \end{align}
	and the first part of \eqref{EQ:DELTA_BOUND1} follows since $\|\epsilon(\theta)\|_{\infty}\leq\delta_x$ from \eqref{EQ:TMP1}.
	The second part of \eqref{EQ:DELTA_BOUND1} follows from an identical argument but writing $\t{f}(\theta)=\frac{1}{n}\|r(\theta)+\epsilon(\theta)\|^2$, and the final conclusion follows immediately from the first part of \eqref{EQ:DELTA_BOUND1}.
\end{proof}

We construct these bounds to rely mostly on $\t{f}(\theta)$, since this is the value which is observed by the algorithm (rather than the true value $f(\theta)$).
From the concavity of $\sqrt{\cdot}$, if $\t{f}(\theta)$ is larger then $\|\t{x}_i(\theta)-\hat{x}_i(\theta)\|$ must be smaller to achieve the same $\delta_f$.

Lastly, we note the key reason why we require \eqref{EQ:MIN_ACCURACY}: it guarantees that our estimate $\t{\rho}^k$ of $\rho^k$ is not too inaccurate.

\begin{lemma} \label{lem_rho_relationships}
	Suppose $|\t{f}(\theta^k)-f(\theta^k)|\leq\delta^k$ and $|\t{f}(\theta^k+s^k)-f(\theta^k+s^k)|\leq\delta^k_+$.
	If \eqref{EQ:MIN_ACCURACY} holds, then $|\t{\rho}^k-\rho^k| \leq 2\eta_1^{\prime}$.
\end{lemma}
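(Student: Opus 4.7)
The plan is to expand the definitions of $\rho^k$ and $\t{\rho}^k$, subtract, and then bound the numerator using the two accuracy hypotheses and the denominator using the minimum accuracy requirement \eqref{EQ:MIN_ACCURACY}.

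First I would write
\begin{align*}
\t{\rho}^k - \rho^k = \frac{[\t{f}(\theta^k)-f(\theta^k)] - [\t{f}(\theta^k+s^k)-f(\theta^k+s^k)]}{m^k(0)-m^k(s^k)},
\end{align*}
using the common denominator $m^k(0)-m^k(s^k)$ (which we may assume positive, since otherwise the algorithm would not attempt the step). Applying the triangle inequality to the numerator and using the two hypotheses $|\t{f}(\theta^k)-f(\theta^k)|\leq\delta^k$ and $|\t{f}(\theta^k+s^k)-f(\theta^k+s^k)|\leq\delta^k_+$ gives
\begin{align*}
|\t{\rho}^k - \rho^k| \leq \frac{\delta^k + \delta^k_+}{m^k(0)-m^k(s^k)} \leq \frac{2\max(\delta^k,\delta^k_+)}{m^k(0)-m^k(s^k)}.
\end{align*}

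Then I would invoke \eqref{EQ:MIN_ACCURACY}, which states $\max(\delta^k,\delta^k_+) \leq \eta_1^{\prime}[m^k(0)-m^k(s^k)]$, to cancel the denominator and conclude $|\t{\rho}^k-\rho^k| \leq 2\eta_1^{\prime}$.

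There is no real obstacle here; the only subtlety is ensuring that $m^k(0)-m^k(s^k) > 0$ so that the division makes sense. This follows from the fact that $s^k$ is an (approximate) minimizer of the trust-region subproblem \eqref{EQ:TRS} and hence achieves at least a Cauchy-type decrease on the model (a property that would be established elsewhere in the convergence analysis under \assref{ass_cauchy_bdd_hess}); whenever $\|g^k\|>0$, this decrease is strictly positive. The result is thus essentially an algebraic consequence of the triangle inequality combined with the accuracy requirement \eqref{EQ:MIN_ACCURACY}.
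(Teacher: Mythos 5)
Your proof is correct and is exactly the argument the paper relies on: the paper's own proof simply defers to \cite[Section 10.6.1]{Conn2000}, which carries out the same cancellation of the common denominator $m^k(0)-m^k(s^k)$, triangle inequality on the numerator, and application of \eqref{EQ:MIN_ACCURACY}. Your additional remark that positivity of the model decrease follows from the Cauchy decrease condition in \assref{ass_cauchy_bdd_hess} is the right justification for the division being well-defined.
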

\begin{proof}
	Follows immediately from \eqref{EQ:RHO_NOISY} and \eqref{EQ:RHO_TRUE}; see \cite[Section 10.6.1]{Conn2000}.
\end{proof}

\subsection{Convergence and Worst-Case Complexity}
We now prove the global convergence of \algref{alg_dynamic_accuracy_dfo} and analyse its worst-case complexity (i.e.~the number of iterations required to achieve $\|\nabla f(\theta^k)\|\leq\epsilon$ for the first time).

\begin{assumption} \label{ass_cauchy_bdd_hess}
	The computed trust-region step $s^k$ satisfies
	\begin{align}
		m^k(0)-m^k(s^k) \geq \frac{1}{2} \|g^k\| \min\left(\Delta^k, \frac{\|g^k\|}{\|H^k\|+1}\right), \label{eq_cauchy}
	\end{align}
	and there exists $\kappa_H\geq 1$ such that $\|H^k\| + 1 \leq \kappa_H$ for all $k$.
\end{assumption}

\assref{ass_cauchy_bdd_hess} is standard and the condition \eqref{eq_cauchy} easy to achieve in practice \cite[Chapter 6.3]{Conn2000}.

Firstly, we must show that the inner loops for the criticality and accuracy phases terminate.
We begin with the criticality phase, and then consider the accuracy phase.

\begin{lemma}[{\cite[Lemma B.1]{Cartis2019a}}] \label{lem_criticality}
	Suppose \assref{ASS:SMOOTHNESS} holds and $\|\nabla f(\theta^k)\|\geq\epsilon>0$. 
	Then the criticality phase terminates in finite time with
	\begin{align}
		\min\left(\Delta^k_{\rm init}, \frac{\gamma_{\rm dec}\epsilon}{\kappaeg+1}\right) \leq \Delta^k \leq \Delta^k_{\rm init},
	\end{align}
	where $\Delta^k_{\rm init}$ is the value of $\Delta^k$ before the criticality phase begins.
\end{lemma}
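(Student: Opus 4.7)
The plan is to exploit full linearity of $m^k$ at each shrinking step to relate the model gradient $g^k$ back to the true gradient $\nabla f(\theta^k)$, and then show that once $\Delta^k$ drops below a threshold depending on $\epsilon$ and $\kappaeg$, the termination condition $\Delta^k \leq \|g^k\|$ is automatically satisfied. Concretely, at the $i$-th pass through the criticality loop we have a candidate radius $\Delta^k_i := \Delta^k_{\rm init}\, \gamma_{\rm dec}^i$ and, since the model has been made fully linear in $B(\theta^k,\Delta^k_i)$, the fully linear gradient bound evaluated at $s=0$ gives
\begin{equation*}
\|g^k_i - \nabla f(\theta^k)\| \leq \kappaeg \Delta^k_i.
\end{equation*}

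Combining with $\|\nabla f(\theta^k)\| \geq \epsilon$ via the reverse triangle inequality yields $\|g^k_i\| \geq \epsilon - \kappaeg \Delta^k_i$. Thus whenever $\Delta^k_i \leq \epsilon/(\kappaeg+1)$, we obtain $\|g^k_i\| \geq \Delta^k_i$, so the loop terminates. Since $\gamma_{\rm dec}^i \to 0$, this threshold is reached after finitely many shrinkings, establishing finite termination and the upper bound $\Delta^k \leq \Delta^k_{\rm init}$ (which is immediate from $\gamma_{\rm dec}<1$ and $i\geq 0$).

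For the lower bound, let $i^\star$ be the index at which the loop actually terminates. If $i^\star=0$, then $\Delta^k = \Delta^k_{\rm init}$ and we are done. If $i^\star \geq 1$, I would argue by minimality: had $\Delta^k_{i^\star-1} \leq \epsilon/(\kappaeg+1)$ held, the sufficient condition derived above would have triggered termination one step earlier, contradicting the definition of $i^\star$. Hence $\Delta^k_{i^\star-1} > \epsilon/(\kappaeg+1)$, and multiplying by $\gamma_{\rm dec}$ gives
\begin{equation*}
\Delta^k = \gamma_{\rm dec}\, \Delta^k_{i^\star-1} > \frac{\gamma_{\rm dec}\, \epsilon}{\kappaeg+1}.
\end{equation*}
Taking the minimum with the $i^\star=0$ case yields the claimed $\Delta^k \geq \min(\Delta^k_{\rm init}, \gamma_{\rm dec}\epsilon/(\kappaeg+1))$.

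The only mildly subtle step is the minimality argument for the lower bound, since one must be careful that the sufficient condition $\Delta^k_i \leq \epsilon/(\kappaeg+1)$ is what forces termination, rather than the raw condition $\Delta^k_i \leq \|g^k_i\|$ (which is only necessary). Everything else is routine manipulation of the fully linear definition together with $\|\nabla f(\theta^k)\| \geq \epsilon$; no use of \assref{ass_cauchy_bdd_hess} or of the accuracy phase is needed here, since the criticality phase acts only on the model and trust-region radius.
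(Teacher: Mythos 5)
Your proof is correct and takes essentially the same route as the paper's source: the paper proves nothing in-text but cites \cite[Lemma B.1]{Cartis2019a}, whose argument is exactly this combination of the fully linear gradient bound at $s=0$, the reverse triangle inequality yielding the sufficient termination threshold $\Delta \leq \epsilon/(\kappaeg+1)$, and the contrapositive/minimality argument on the last failed iteration to pick up the factor $\gamma_{\rm dec}$ in the lower bound (there with general exit-test and shrink parameters, specialized here to $\mu=1$ and $\omega=\gamma_{\rm dec}$). Your handling of the subtle point — that failure to terminate implies the \emph{sufficient} condition failed, not merely the raw exit test — is precisely what makes the standard proof work.
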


\begin{lemma}[{\cite[Lemma 3.7]{Cartis2019a}}] \label{lem_eps_g}
	Suppose \assref{ASS:SMOOTHNESS} holds.
	Then in all iterations we have $\|g^k\| \geq \min(\epsilon, \Delta^k)$. 
	Also, if $\|\nabla f(\theta^k)\|\geq\epsilon>0$, then $\|g^k\| \geq \epsilon/(\kappaeg + 1) > 0$.
\end{lemma}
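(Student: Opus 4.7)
The plan is to split on whether the criticality phase has been triggered at iteration $k$, i.e.\ on the sign of $\|g^k\|-\epsilon$, and then use the guarantees provided by the criticality phase (a fully linear model on a ball of radius at most $\|g^k\|$) together with the standard triangle inequality argument that relates the model gradient $g^k$ to the true gradient $\nabla f(\theta^k)$.

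For the first claim, $\|g^k\|\geq \min(\epsilon,\Delta^k)$, I would argue as follows. If $\|g^k\|>\epsilon$, the criticality test on line~\ref{ln_crit_start} is not triggered and the bound is immediate since $\|g^k\|>\epsilon\geq\min(\epsilon,\Delta^k)$. Otherwise $\|g^k\|\leq\epsilon$ on entry, so the criticality phase runs and, by construction, returns a (possibly shrunken) $\Delta^k$ together with a fully linear $m^k$ such that $\Delta^k\leq\|g^k\|$. In this case $\|g^k\|\geq \Delta^k\geq\min(\epsilon,\Delta^k)$. (Finiteness of the criticality loop is exactly \lemref{lem_criticality}, so this is a well-posed reasoning.)

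For the second claim, assume $\|\nabla f(\theta^k)\|\geq\epsilon$. If $\|g^k\|\geq\epsilon$ there is nothing to show, since $\kappaeg+1\geq 1$. Otherwise the criticality phase was triggered, so on exit $m^k$ is fully linear on $B(\theta^k,\Delta^k)$ and $\Delta^k\leq\|g^k\|$. Full linearity at $s=0$ gives $\|\nabla f(\theta^k)-g^k\|\leq\kappaeg\Delta^k$, hence by the reverse triangle inequality
\begin{equation*}
\|g^k\|\;\geq\;\|\nabla f(\theta^k)\|-\kappaeg\Delta^k\;\geq\;\epsilon-\kappaeg\|g^k\|,
\end{equation*}
which rearranges to $\|g^k\|\geq \epsilon/(\kappaeg+1)>0$, as required.

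The only subtle point, and where I would be most careful, is to track precisely what the criticality phase guarantees at termination: namely that the \emph{final} pair $(m^k,\Delta^k)$ simultaneously satisfies full linearity on $B(\theta^k,\Delta^k)$ and $\Delta^k\leq\|g^k\|$, where $g^k$ is the gradient of the final (possibly rebuilt) model. Once this is pinned down, both parts of the lemma reduce to a one-line case split and a single application of the full-linearity bound; no smoothness of $f$ beyond what is already assumed in \assref{ASS:SMOOTHNESS} is actually used here, and no properties of the accuracy phase enter the argument.
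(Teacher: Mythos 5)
Your proposal is correct and follows essentially the same argument as the paper's source for this result (\cite[Lemma 3.7]{Cartis2019a}, which the paper cites rather than reproves): a case split on whether the criticality phase was triggered, using its exit guarantee that the final pair $(m^k,\Delta^k)$ satisfies both full linearity on $B(\theta^k,\Delta^k)$ and $\Delta^k\leq\|g^k\|$, followed by the full-linearity bound at $s=0$ combined with the reverse triangle inequality. Your closing remark about pinning down the exit guarantee for the \emph{final} model gradient is exactly the right subtlety, and your handling of it is sound.
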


We note that our presentation of the criticality phase here can be made more general by allowing $\|g^k\|\geq \epsilon_C \neq \epsilon$ as the entry test, setting $\Delta^k$ to $\omega^i \Delta^k$ for some $\omega\in(0,1)$ possibly different to $\gamma_{\rm dec}$, and having an exit test $\Delta^k \leq \mu\|g^k\|$ for some $\mu>0$.
All the below results hold under these assumptions, with modifications as per \cite{Cartis2019a}.

\begin{lemma}
	If Assumptions \ref{ASS:SMOOTHNESS} and \ref{ass_cauchy_bdd_hess} hold and $\|\nabla f(\theta^k)\|\geq\epsilon>0$, then the accuracy phase terminates in finite time (i.e.~line \ref{ln_accuracy_end} of \algref{alg_dynamic_accuracy_dfo} is eventually called)
\end{lemma}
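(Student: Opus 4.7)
The plan is to show that the quantity on the right-hand side of \eqref{EQ:MIN_ACCURACY} is bounded below by a positive constant throughout the inner loop, while the quantity on the left can be driven to zero by running the lower-level solver longer. Concretely, I would establish a uniform lower bound
\begin{align}
m^k(0) - m^k(s^k) \geq C > 0
\end{align}
that holds at every pass of the repeat/until loop, independently of how many times we have improved the accuracy of $\t{f}(\theta^k)$, and then invoke the linear convergence of the lower-level solver from Section \ref{sec_lower_level} to conclude that $\delta^k \leq \eta_1^\prime C$ after finitely many additional lower-level iterations.

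First I would argue that $\|g^k\|$ stays uniformly bounded away from zero within the inner loop. Whenever the check $\|g^k\|\leq\epsilon$ is triggered, the criticality phase terminates in finite time by \lemref{lem_criticality} and produces a fully linear model with $\Delta^k \leq \|g^k\|$; this together with $\|\nabla f(\theta^k)\| \geq \epsilon$ gives $\|g^k\| \geq \epsilon/(\kappaeg+1)$ by \lemref{lem_eps_g}. If the check is not triggered, then $\|g^k\| > \epsilon$ directly. In both cases $\|g^k\| \geq \epsilon/(\kappaeg+1)$.

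Next, I would show $\Delta^k$ is uniformly bounded below during the inner loop by $\Delta^{k,*} := \min(\Delta^k_{\text{start}}, \gamma_{\rm dec}\epsilon/(\kappaeg+1))$, where $\Delta^k_{\text{start}}$ is the value of $\Delta^k$ upon entering iteration $k$ of the outer loop. The only mechanism that shrinks $\Delta^k$ inside the loop is the criticality phase, and \lemref{lem_criticality} guarantees that the shrunken radius is still at least $\min(\Delta^k_{\text{init}}, \gamma_{\rm dec}\epsilon/(\kappaeg+1))$; a one-line induction on the number of inner iterations then gives the claim. Plugging the two lower bounds into the Cauchy decrease condition \eqref{eq_cauchy} and using $\|H^k\|+1\leq \kappa_H$ from \assref{ass_cauchy_bdd_hess} produces the desired uniform positive lower bound $C$ on $m^k(0)-m^k(s^k)$.

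Finally, I would finish by noting that, by \lemref{lem_inexact_eval_accuracy}, it suffices to drive $\|\t{x}_i(\theta^k)-\hat{x}_i(\theta^k)\|$ small enough to make $\delta^k \leq \eta_1^\prime C$; since $\LowObjIT$ is $L_i$-smooth and $\mu_i$-strongly convex by \assref{as:phi}, the linear rates \eqref{EQ:TOLGDUNCONS} or \eqref{eq_fista_rate} guarantee this in finitely many lower-level iterations. The main subtlety to be careful about is that re-evaluating $\t{f}(\theta^k)$ inside the loop changes $\t{r}(\theta^k)$ and hence the model $m^k$, $g^k$ and step $s^k$; the point of the argument above is that the lower bounds on $\|g^k\|$ and $\Delta^k$ depend only on $\epsilon$, $\kappaeg$, $\kappa_H$ and $\Delta^k_{\text{start}}$, and therefore remain valid after every update of the model, so $C$ is truly uniform over the inner iterations.
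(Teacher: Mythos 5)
Your proposal is correct and is essentially the paper's own argument made self-contained: the paper's proof consists of invoking \lemref{lem_eps_g} for the bound $\|g^k\|\geq\epsilon/(\kappaeg+1)$ and then citing \cite[Lemma 10.6.1]{Conn2000}, whose content is precisely what you reconstruct --- the criticality-phase mechanism keeping $\Delta^k$ bounded below, the Cauchy condition \eqref{eq_cauchy} with $\|H^k\|+1\leq\kappa_H$ yielding a uniform positive lower bound $C$ on $m^k(0)-m^k(s^k)$, and the linearly convergent lower-level solver then meeting the resulting fixed accuracy threshold $\eta_1^{\prime}C$ after finitely many iterations. Your closing observation that re-evaluating $\t{f}(\theta^k)$ changes the interpolation model (and hence $g^k$, $s^k$), so that the lower bounds must be uniform over all inner passes, is exactly the point needed to transfer the derivative-based result of \cite{Conn2000} to this interpolation-model setting, and is a detail the paper leaves implicit inside the citation.
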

\begin{proof}
	From \lemref{lem_eps_g} we have $\|g^k\|\geq\epsilon/(\kappaeg + 1)$, and the result then follows from \cite[Lemma 10.6.1]{Conn2000}.
\end{proof}

We now collect some key preliminary results required to establish complexity bounds.

\begin{lemma} \label{eq_min_success}
	Suppose Assumptions \ref{ASS:SMOOTHNESS} and \ref{ass_cauchy_bdd_hess} hold, $m^k$ is fully linear in $B(\theta^k,\Delta^k)$ and
	\begin{align}
		\Delta^k &\leq c_0 \|g^k\|, \nonumber \\
		&\text{where} \quad c_0 := \min\left(\frac{1-\eta_2-2\eta_1^{\prime}}{4\kappa_{\rm ef}}, \frac{1}{\kappa_H}\right) > 0, \label{eq_c0}
	\end{align}
	then $\t{\rho}^k\geq\eta_2$.
\end{lemma}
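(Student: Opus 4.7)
The plan is to combine three ingredients: the Cauchy-type model decrease from \assref{ass_cauchy_bdd_hess}, the full linearity bounds on $|f - m^k|$ on the trust region, and the closeness between $\t{\rho}^k$ and $\rho^k$ established in \lemref{lem_rho_relationships}. The overall structure is to first show that $\rho^k$ is close to $1$ (in fact at least $\eta_2 + 2\eta_1'$) and then convert this to the desired bound on $\t{\rho}^k$.

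First I would use the hypothesis $\Delta^k \le c_0\|g^k\| \le \|g^k\|/\kappa_H \le \|g^k\|/(\|H^k\|+1)$, which ensures that the $\min$ in \eqref{eq_cauchy} is attained by $\Delta^k$. The Cauchy condition then simplifies to
\begin{align}
  m^k(0)-m^k(s^k) \;\ge\; \tfrac{1}{2}\,\|g^k\|\,\Delta^k.
\end{align}
Next, since $m^k$ is fully linear on $B(\theta^k,\Delta^k)$, both $|f(\theta^k)-m^k(0)|$ and $|f(\theta^k+s^k)-m^k(s^k)|$ are at most $\kappa_{\rm ef}(\Delta^k)^2$, so a triangle inequality gives
\begin{align}
  \bigl|[f(\theta^k)-f(\theta^k+s^k)] - [m^k(0)-m^k(s^k)]\bigr| \le 2\kappa_{\rm ef}(\Delta^k)^2.
\end{align}

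Dividing by $m^k(0)-m^k(s^k)$ and using the Cauchy bound above yields
\begin{align}
  |\rho^k-1| \;\le\; \frac{2\kappa_{\rm ef}(\Delta^k)^2}{\tfrac{1}{2}\|g^k\|\Delta^k} \;=\; \frac{4\kappa_{\rm ef}\Delta^k}{\|g^k\|} \;\le\; 4\kappa_{\rm ef} c_0 \;\le\; 1-\eta_2-2\eta_1',
\end{align}
where the last inequality is precisely the first term in the definition of $c_0$ in \eqref{eq_c0}. Hence $\rho^k \ge \eta_2 + 2\eta_1'$. Finally, the accuracy requirement \eqref{EQ:MIN_ACCURACY} has been enforced by the accuracy phase of \algref{alg_dynamic_accuracy_dfo}, so \lemref{lem_rho_relationships} applies and yields $\t{\rho}^k \ge \rho^k - 2\eta_1' \ge \eta_2$, as claimed.

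None of the steps should be particularly difficult; this is a standard dynamic-accuracy trust-region calculation (cf.~\cite[Lemma 10.6.1]{Conn2000} and \cite[Lemma 3.8]{Cartis2019a}). The only point that requires a little care is to verify that the two arguments of the $\min$ defining $c_0$ play their expected roles: the $1/\kappa_H$ term ensures the Cauchy decrease uses $\Delta^k$ rather than $\|g^k\|/(\|H^k\|+1)$, and the $(1-\eta_2-2\eta_1')/(4\kappa_{\rm ef})$ term is exactly what is needed so that the full-linearity error is dominated by the guaranteed model decrease. The parameter restriction $\eta_1' < (1-\eta_2)/2$ from the algorithm inputs guarantees $1-\eta_2-2\eta_1' > 0$, so $c_0 > 0$ and the bound on $|\rho^k-1|$ is meaningful.
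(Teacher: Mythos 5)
Your proposal is correct and follows essentially the same route as the paper's own proof: both use the Cauchy decrease condition (with the $1/\kappa_H$ part of $c_0$ guaranteeing $m^k(0)-m^k(s^k)\geq\tfrac{1}{2}\|g^k\|\Delta^k$), full linearity at $s=0$ and $s=s^k$ to bound $|\rho^k-1|\leq 4\kappa_{\rm ef}\Delta^k/\|g^k\|\leq 1-\eta_2-2\eta_1^{\prime}$, and then \lemref{lem_rho_relationships} to conclude $\t{\rho}^k\geq\rho^k-2\eta_1^{\prime}\geq\eta_2$. Your added remarks on why $c_0>0$ (from the parameter restriction $\eta_1^{\prime}<(1-\eta_2)/2$) and on \eqref{EQ:MIN_ACCURACY} being enforced by the accuracy phase are consistent with, and make explicit, what the paper leaves implicit.
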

\begin{proof}
	We compute
	\begin{align}
		|\rho^k-1| &= \left|\frac{(f(\theta^k)-f(\theta^k+s^k))-(m^k(0)-m^k(s^k))}{m^k(0)-m^k(s^k)}\right|, \\
		&\leq \frac{|f(\theta^k+s^k)-m^k(s^k)|}{|m^k(0)-m^k(s^k)|} + \frac{|f(\theta^k)-m^k(0)|}{|m^k(0)-m^k(s^k)|}.
	\end{align}
	Since $\Delta^k \leq \|g^k\|/\kappa_H$, from \assref{ass_cauchy_bdd_hess} we have
	\begin{align}
		m^k(0)-m^k(s^k) \geq \frac{1}{2}\|g^k\|\Delta^k.
	\end{align}
	From this and full linearity, we get
	\begin{align}
		|\rho^k-1| \leq 2\left(\frac{2\kappa_{\rm ef}(\Delta^k)^2}{\|g^k\|\Delta^k}\right) \leq 1-\eta_2-2\eta_1^{\prime},
	\end{align}
	and so $\rho^k\geq\eta_2+2\eta_1^{\prime}$, hence $\t{\rho}^k\geq\eta_2$ from \lemref{lem_rho_relationships}.
\end{proof}

\begin{lemma} \label{lem_delta_min}
	Suppose Assumptions \ref{ASS:SMOOTHNESS} and \ref{ass_cauchy_bdd_hess} hold.
	Suppose $\|\nabla f(\theta^k)\|\geq\epsilon$ for all $k=0,\ldots,k_{\epsilon}$ and some $\epsilon\in(0,1)$.
	Then, for all $k\leq k_{\epsilon}$,
	\begin{align}
		\Delta^k \geq \Delta_{\min} := \gamma_{\rm dec}\min\left(\Delta^0, \frac{c_0\epsilon}{\kappaeg+1}, \frac{\gamma_{\rm dec}\epsilon}{\kappaeg+1}\right) > 0. \label{eq_delta_min_dfo}
	\end{align}
\end{lemma}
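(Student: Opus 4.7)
The plan is a standard induction argument on the iteration counter $k$, with the main work being to carefully distinguish the trust-region radius before and after the criticality phase. Let $\Delta^k_{\rm init}$ denote the radius at the start of iteration $k$ (i.e.~the value produced by \eqref{eq_delta_update} at iteration $k-1$, or the user-supplied $\Delta^0$ when $k=0$), and reserve $\Delta^k$ for the possibly-shrunk radius at which the trust-region subproblem \eqref{EQ:TRS} is actually solved. I will prove by induction that both $\Delta^k_{\rm init} \geq \Delta_{\min}$ and $\Delta^k \geq \Delta_{\min}$ for every $k \leq k_\epsilon$.

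The base case $k=0$ is immediate: $\Delta^0_{\rm init} = \Delta^0 \geq \Delta_{\min}$ by the definition \eqref{eq_delta_min_dfo}, and if the criticality phase activates, \lemref{lem_criticality} gives $\Delta^0 \geq \min(\Delta^0, \gamma_{\rm dec}\epsilon/(\kappaeg+1))$, which is at least $\Delta_{\min}$ since the definition of $\Delta_{\min}$ makes $\Delta_{\min} \leq \gamma_{\rm dec}^2 \epsilon/(\kappaeg+1) \leq \gamma_{\rm dec}\epsilon/(\kappaeg+1)$.

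For the inductive step, assume $\Delta^j \geq \Delta_{\min}$ for all $j \leq k$. From \eqref{eq_delta_update}, $\Delta^{k+1}_{\rm init} \in \{\min(\gamma_{\rm inc}\Delta^k, \Delta_{\max}),\, \Delta^k,\, \gamma_{\rm dec}\Delta^k\}$, so only the last branch can decrease the radius, and it is triggered exactly when $\tilde{\rho}^k < \eta_2$ and $m^k$ is fully linear in $B(\theta^k,\Delta^k)$. Suppose for contradiction that this branch yields $\Delta^{k+1}_{\rm init} < \Delta_{\min}$. Then $\Delta^k < \Delta_{\min}/\gamma_{\rm dec} \leq c_0 \epsilon/(\kappaeg+1)$ by the definition of $\Delta_{\min}$, and \lemref{lem_eps_g} (which applies because $\|\nabla f(\theta^k)\|\geq\epsilon$) gives $\|g^k\| \geq \epsilon/(\kappaeg+1)$, so $\Delta^k \leq c_0\|g^k\|$. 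Full linearity of $m^k$ then lets us invoke \lemref{eq_min_success} to conclude $\tilde{\rho}^k \geq \eta_2$, a direct contradiction to the branch assumption. Hence $\Delta^{k+1}_{\rm init} \geq \Delta_{\min}$.

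Finally, if the criticality phase activates at iteration $k+1$, \lemref{lem_criticality} applied with incoming radius $\Delta^{k+1}_{\rm init} \geq \Delta_{\min}$ gives the post-criticality value $\Delta^{k+1} \geq \min(\Delta^{k+1}_{\rm init}, \gamma_{\rm dec}\epsilon/(\kappaeg+1)) \geq \Delta_{\min}$, completing the induction. The only subtle point is the bookkeeping: the three terms inside the $\min$ in \eqref{eq_delta_min_dfo} correspond respectively to (i) the initial radius not being exceeded, (ii) the contradiction argument above requiring $\Delta_{\min}/\gamma_{\rm dec} \leq c_0 \epsilon/(\kappaeg+1)$, and (iii) the criticality-phase lower bound $\gamma_{\rm dec}\epsilon/(\kappaeg+1)$ always dominating $\Delta_{\min}$; choosing $\Delta_{\min}$ as the minimum of all three (times $\gamma_{\rm dec}$) is exactly what makes both stages of the induction go through.
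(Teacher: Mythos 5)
Your proof is correct and follows essentially the same route as the paper's: the paper argues by contradiction at the first iteration where $\Delta^k < \Delta_{\min}$, using exactly the same three ingredients (\lemref{lem_criticality} to rule out the criticality phase as the cause of the decrease, \lemref{lem_eps_g} to bound $\|g^k\|$ from below, and \lemref{eq_min_success} to show that a small step with a fully linear model must be successful), which is just the minimal-counterexample form of your induction. The only notable difference is that the paper spends a paragraph verifying that repeated accuracy-phase/criticality-phase re-entries within a single iteration cannot shrink $\Delta^k$ further, a point your formulation absorbs almost automatically since the floor $\gamma_{\rm dec}\epsilon/(\kappaeg+1) \geq \Delta_{\min}$ guaranteed by \lemref{lem_criticality} is preserved under repeated application.
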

\begin{proof}
	As above, we let $\Delta^k_{\rm init}$ and $m^k_{\rm init}$ denote the values of $\Delta^k$ and $m^k$ before the criticality phase (i.e.~$\Delta^k_{\rm init}=\Delta^k$ and $m^k_{\rm init}=m^k$ if the criticality phase is not called).
	From \lemref{lem_eps_g}, we know $\|g^k\|\geq\epsilon/(\kappaeg+1)$ for all $k\leq k_{\epsilon}$.
	Suppose by contradiction $k\leq k_{\epsilon}$ is the first iteration such that $\Delta^k<\Delta_{\min}$.
	Then from \lemref{lem_criticality},
	\begin{align}
		\frac{\gamma_{\rm dec}\epsilon}{\kappaeg+1} \geq \Delta_{\min} > \Delta^k \geq \min\left(\Delta^k_{\rm init}, \frac{\gamma_{\rm dec}\epsilon}{\kappaeg+1}\right),
	\end{align}
	and so $\Delta^k\geq \Delta^k_{\rm init}$; hence $\Delta^k=\Delta^k_{\rm init}$.
	That is, either the criticality phase is not called, or terminates with $i=0$ (in this case, the model $m^k$ is formed simply by making $m^k_{\rm init}$ fully linear in $B(\theta^k,\Delta^k)=B(\theta^k,\Delta^k_{\rm init})$).
	
	If the accuracy phase loop occurs, we go back to the criticality phase, which can potentially happen multiple times.
	However, since the only change is that $\t{r}(\theta^k)$ is evaluated to higher accuracy, incorporating this information into the model $m^k$ can never destroy full linearity. 
	Hence, after the accuracy phase, by the same reasoning as above, either one iteration of the criticality phase occurs (i.e.~$m^k$ is made fully linear) or it is not called.
	If the accuracy phase is called multiple times and the criticality phase occurs multiple times, all times except the first have no effect (since the accuracy phase can never destroy full linearity).
	Thus $\Delta^k$ is unchanged by the accuracy phase.
	
	Since $\Delta_{\min}<\Delta^0_{\rm init}$, we have $k\geq 1$.
	As $k$ is the first iteration such that $\Delta^k<\Delta_{\min}$ and $\Delta^k=\Delta^k_{\rm init}$, we must have $\Delta^k_{\rm init}=\gamma_{\rm dec}\Delta^{k-1}$ (as this is the only other way $\Delta^k$ can be reduced).
	Therefore $\Delta^{k-1}=\Delta^k/\gamma_{\rm dec}<\Delta_{\min}/\gamma_{\rm dec}$, and so
	\begin{align}
		\Delta^{k-1} \leq \min\left(\frac{c_0\epsilon}{\kappaeg+1}, \frac{\gamma_{\rm dec}\epsilon}{\kappaeg+1}\right). \label{eq_tmp1}
	\end{align}
	We then have $\Delta^{k-1} \leq c_0\epsilon/(\kappaeg+1) \leq c_0\|g^{k-1}\|$, and so by \eqref{eq_min_success} either $\t{\rho}^k\geq\eta_2$ or $m^{k-1}$ is not fully linear.
	Either way, we set $\Delta^k_{\rm init}\geq\Delta^{k-1}$ in \eqref{eq_delta_update}.
	This contradicts $\Delta^k_{\rm init}=\gamma_{\rm dec}\Delta^{k-1}$ above, and we are done.
\end{proof}

We now bound the number of iterations of each type.
Specifically, we suppose that $k_{\epsilon}+1$ is the first $k$ such that $\|\nabla f(\theta^k)\|\geq\epsilon$.
Then, we define the sets of iterations:
\begin{itemize}
	\item $\mathcal{S}_\epsilon$ is the set of iterations $k\in\{0,\ldots,k_{\epsilon}\}$ which are `successful'; i.e.~$\t{\rho}^k\geq\eta_2$, or $\t{\rho}^k\geq\eta_1$ and $m^k$ is fully linear in $B(\theta^k,\Delta^k)$.
	\item $\mathcal{M}_\epsilon$ is the set of iterations $k\in\{0,\ldots,k_{\epsilon}\}$ which are `model-improving'; i.e.~$\t{\rho}^k<\eta_2$ and $m^k$ is not fully linear in $B(\theta^k,\Delta^k)$.
	\item $\mathcal{U}_\epsilon$ is the set of iterations $k\in\{0,\ldots,k_{\epsilon}\}$ which are `unsuccessful'; i.e.~$\t{\rho}^k<\eta_1$ and $m^k$ is fully linear in $B(\theta^k,\Delta^k)$.
\end{itemize}
These three sets form a partition of $\{0,\ldots,k_{\epsilon}\}$.

\begin{proposition} \label{prop_succ_complexity}
	Suppose Assumptions \ref{ASS:SMOOTHNESS} and \ref{ass_cauchy_bdd_hess} hold.
	Then
	\begin{align}
		|\mathcal{S}_{\epsilon}| \leq \frac{2 (\kappaeg+1) f(\theta^0)}{(\eta_1-2\eta_1^{\prime})\epsilon\Delta_{\min}}. \label{eq_successful_count}
	\end{align}
\end{proposition}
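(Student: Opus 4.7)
The plan is to run the standard trust-region decrease argument: on each successful iteration the true objective decreases by a definite amount, and summing over $\mathcal{S}_\epsilon$ against the total drop $f(\theta^0) - \inf f \leq f(\theta^0)$ yields the bound.

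First, I would fix $k \in \mathcal{S}_\epsilon$. By the definition of $\mathcal{S}_\epsilon$ we have $\tilde{\rho}^k \geq \eta_1$, and \eqref{EQ:MIN_ACCURACY} holds, so \lemref{lem_rho_relationships} gives $\rho^k \geq \tilde{\rho}^k - 2\eta_1^\prime \geq \eta_1 - 2\eta_1^\prime$. Combined with the Cauchy-type bound from \assref{ass_cauchy_bdd_hess} and the update $\theta^{k+1} = \theta^k + s^k$, this yields
\begin{align}
f(\theta^k) - f(\theta^{k+1}) &= \rho^k \bigl(m^k(0)-m^k(s^k)\bigr) \nonumber \\
&\geq \frac{\eta_1 - 2\eta_1^\prime}{2} \|g^k\| \min\!\left(\Delta^k, \frac{\|g^k\|}{\|H^k\|+1}\right).
\end{align}

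Next, since $k \leq k_\epsilon$, \lemref{lem_eps_g} gives $\|g^k\| \geq \epsilon/(\kappaeg+1)$ and \lemref{lem_delta_min} gives $\Delta^k \geq \Delta_{\min}$. The key observation, which is the main obstacle, is to show that the minimum in the Cauchy bound is at least $\Delta_{\min}$. From the definition of $\Delta_{\min}$ in \eqref{eq_delta_min_dfo} and $c_0 \leq 1/\kappa_H$ in \eqref{eq_c0}, we get
\begin{align}
\Delta_{\min} \leq \gamma_{\rm dec}\, \frac{c_0 \epsilon}{\kappaeg+1} \leq \frac{1}{\kappa_H} \cdot \frac{\epsilon}{\kappaeg+1} \leq \frac{\|g^k\|}{\|H^k\|+1},
\end{align}
using $\|H^k\|+1 \leq \kappa_H$. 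Together with $\Delta^k \geq \Delta_{\min}$, this yields $\min(\Delta^k, \|g^k\|/(\|H^k\|+1)) \geq \Delta_{\min}$, and consequently
\begin{align}
f(\theta^k) - f(\theta^{k+1}) \geq \frac{(\eta_1 - 2\eta_1^\prime)\, \epsilon\, \Delta_{\min}}{2(\kappaeg+1)}.
\end{align}

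Finally, I would telescope: on any iteration $k \notin \mathcal{S}_\epsilon$ the iterate is unchanged ($\theta^{k+1} = \theta^k$), so $f(\theta^k) = f(\theta^{k+1})$. Summing over $k = 0,\ldots,k_\epsilon$ and using $f \geq 0$,
\begin{align}
f(\theta^0) \geq f(\theta^0) - f(\theta^{k_\epsilon+1}) = \sum_{k \in \mathcal{S}_\epsilon} \bigl(f(\theta^k) - f(\theta^{k+1})\bigr) \geq |\mathcal{S}_\epsilon| \cdot \frac{(\eta_1 - 2\eta_1^\prime)\, \epsilon\, \Delta_{\min}}{2(\kappaeg+1)},
\end{align}
and rearranging gives \eqref{eq_successful_count}. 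The only delicate step is verifying $\|g^k\|/(\|H^k\|+1) \geq \Delta_{\min}$, which relies critically on the specific choice of $c_0$ in \lemref{eq_min_success} being embedded into $\Delta_{\min}$.
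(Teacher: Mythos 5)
Your proof is correct and follows essentially the same route as the paper's: bound $\rho^k \geq \eta_1 - 2\eta_1^{\prime}$ via \lemref{lem_rho_relationships}, lower-bound the Cauchy decrease by $\tfrac{1}{2}(\eta_1-2\eta_1^{\prime})\,\epsilon\,\Delta_{\min}/(\kappaeg+1)$ using \lemref{lem_eps_g}, \lemref{lem_delta_min} and the fact that $c_0$ forces $\Delta_{\min} \leq \epsilon/[\kappa_H(\kappaeg+1)]$, then telescope over successful iterations using $f \geq 0$. The only (cosmetic) difference is that you verify $\|g^k\|/(\|H^k\|+1) \geq \Delta_{\min}$ directly, whereas the paper first keeps the minimum $\min(\Delta_{\min}, \epsilon/[\kappa_H(\kappaeg+1)])$ and then simplifies it to $\Delta_{\min}$.
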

\begin{proof}
	By definition of $k_{\epsilon}$, $\|\nabla f(\theta^k)\|\geq\epsilon$ for all $k\leq k_{\epsilon}$ and so \lemref{lem_eps_g} and \lemref{lem_delta_min} give $\|g^k\|\geq\epsilon/(\kappaeg+1)$ and $\Delta^k\geq\Delta_{\min}$ for all $k\leq k_{\epsilon}$ respectively.
	For any $k\leq k_{\epsilon}$ we have
	\begin{align}
		f(\theta^k) &- f(\theta^{k+1}) \nonumber \\
		&= \rho^k [m^k(0)-m^k(s^k)], \\
		&\geq \frac{1}{2}\rho^k \|g^k\|\min\left(\Delta^k, \frac{\|g^k\|}{\|H^k\|+1}\right), \\
		&\geq \frac{1}{2}\rho^k \frac{\epsilon}{\kappaeg+1} \min\left(\Delta_{\min}, \frac{\epsilon}{\kappa_H(\kappaeg+1)}\right),
	\end{align}
	by definition of $\rho^k$ and \assref{ass_cauchy_bdd_hess}.
	If $k\in\mathcal{S}_{\epsilon}$, we know $\t{\rho_k}\geq\eta_1$, which implies $\rho^k\geq\eta_1-2\eta_1^{\prime}>0$ from \lemref{lem_rho_relationships}.
	Therefore
	\begin{align}
		f(\theta^k) &- f(\theta^{k+1}) \nonumber \\
		&\geq \frac{1}{2}(\eta_1-2\eta_1^{\prime}) \frac{\epsilon}{\kappaeg+1} \min\left(\Delta_{\min}, \frac{\epsilon}{\kappa_H(\kappaeg+1)}\right), \\
		&=  \frac{1}{2}(\eta_1-2\eta_1^{\prime}) \frac{\epsilon}{\kappaeg+1} \Delta_{\min},
	\end{align}
	for all $k\in\mathcal{S}_{\epsilon}$, where the last line follows since $\Delta_{\min} < c_0\epsilon/(\kappaeg+1) \leq \epsilon/[\kappa_H(\kappaeg+1)]$ by definition of $\Delta_{\min}$ \eqref{eq_delta_min_dfo} and $c_0$ \eqref{eq_c0}.
	
	The iterate $\theta^k$ is only changed on successful iterations (i.e.~$\theta^{k+1}=\theta^k$ for all $k\notin\mathcal{S}_{\epsilon}$).
	Thus, as $f(\theta)\geq 0$ from the least-squares structure \eqref{eq_upper_exact}, we get
	\begin{align}
		f(\theta^0) &\geq f(\theta^0) - f(\theta^{k_{\epsilon}+1}), \\
		&= \sum_{k\in\mathcal{S}_{\epsilon}} f(\theta^k) - f(\theta^{k+1}), \\
		&\geq |\mathcal{S}_{\epsilon}| \left[\frac{1}{2}(\eta_1-2\eta_1^{\prime}) \frac{\epsilon}{\kappaeg+1} \Delta_{\min}\right],
	\end{align}
	and the result follows.
\end{proof}

We are now in a position to prove our main results.

\begin{proof}[Proof of \thmref{thm_iter_complexity}]
	To derive a contradiction, suppose that $\|\nabla f(\theta^k)\|\geq\epsilon$ for all $k\in\{0,\ldots,K\}$, and so $\|g^k\|\geq\epsilon/(\kappaeg+1)$ and $\Delta^k\geq\Delta_{\min}$ by \lemref{lem_eps_g} and \lemref{lem_delta_min} respectively.
	Since $K\leq k_{\epsilon}$ by definition of $k_{\epsilon}$, we will try to construct an upper bound on $k_{\epsilon}$.
	We already have an upper bound on $|\mathcal{S}_{\epsilon}|$ from \propref{prop_succ_complexity}.

	If $k\in\mathcal{S}_{\epsilon}$, we set $\Delta^{k+1}\leq \gamma_{\rm inc}\Delta^k$.
	Similarly, if $k\in\mathcal{U}_{\epsilon}$ we set $\Delta^{k+1}=\gamma_{\rm dec}\Delta^k$.
	Thus
	\begin{align}
		\Delta_{\min} \leq \Delta^{k_{\epsilon}} \leq \Delta^0 \gamma_{\rm inc}^{|\mathcal{S}_{\epsilon}|} \gamma_{\rm dec}^{|\mathcal{U}_{\epsilon}|}.
	\end{align}
	That is, $\Delta_{\min}/\Delta^0 \leq \gamma_{\rm inc}^{|\mathcal{S}_{\epsilon}|} \gamma_{\rm dec}^{|\mathcal{U}_{\epsilon}|}$, and so 
	\begin{align}
		|\mathcal{U}_{\epsilon}| &\leq \frac{\log\gamma_{\rm inc}}{|\log \gamma_{\rm dec}|}|\mathcal{S}_{\epsilon}| + \frac{\log(\Delta^0/\Delta_{\min})}{|\log \gamma_{\rm dec}|}, \label{eq_unsuccessful_count}
	\end{align}
	noting we have changed $\Delta_{\min}/\Delta^0<1$ to $\Delta^0/\Delta_{\min}>1$ and used $\log \gamma_{\rm dec} < 0$, so all terms in \eqref{eq_unsuccessful_count} are positive.
	Now, the next iteration after a model-improving iteration cannot be model-improving (as the resulting model is fully linear), giving
	\begin{align}
		|\mathcal{M}_{\epsilon}| &\leq |\mathcal{S}_{\epsilon}| + |\mathcal{U}_{\epsilon}|. \label{eq_model_improving_count}
	\end{align}
	If we combine \eqref{eq_unsuccessful_count} and \eqref{eq_model_improving_count} with $k_{\epsilon}\leq |\mathcal{S}_{\epsilon}| + |\mathcal{M}_{\epsilon}| + |\mathcal{U}_{\epsilon}|$, we get
	\begin{align}
		k_{\epsilon} &\leq 2(|\mathcal{S}_{\epsilon}| + |\mathcal{U}_{\epsilon}|), \\
		&\leq \left(2 + 2\frac{\log\gamma_{\rm inc}}{|\log \gamma_{\rm dec}|}\right)|\mathcal{S}_{\epsilon}| + 2\frac{\log(\Delta^0/\Delta_{\min})}{|\log \gamma_{\rm dec}|},
	\end{align}
	which, given the bound on $|\mathcal{S}_{\epsilon}|$ \eqref{eq_successful_count} means $K\leq k_{\epsilon}$ is bounded above by the right-hand side of \eqref{eq_kepsilon}, a contradiction.
\end{proof}

\begin{proof}[Proof of \corref{cor_complexity}]
	\revision{The iteration bound follows directly from \thmref{thm_iter_complexity}, noting that $\Delta_{\min}=\bigO(\kappa^{-2} \epsilon)$. This also implies that $\liminf_{k\to\infty} \|\nabla f(\theta^k)\|=0$ and so \eqref{eq_liminf} holds from the same argument as in \cite[Theorem 10.13]{Conn2009} without modification.}
	
	For the evaluation bound, we also need to count the number of inner iterations of the criticality phase.
	Suppose $\|\nabla f(\theta^k)\|<\epsilon$ for $k=0,\ldots,k_{\epsilon}$.
	Similar to the above, we define: (a) $\mathcal{C}^{M}_{\epsilon}$ to be the number of criticality phase iterations corresponding to the first iteration of $i=0$ where $m^k$ whas not already fully linear, in iterations $0,\ldots,k_{\epsilon}$; and (b) $\mathcal{C}^{U}_{\epsilon}$ to be the number of criticality phase iterations corresponding to all other iterations $i>0$ (where $\Delta^k$ is reduced and $m^k$ is made fully linear) in iterations $0,\ldots,k_{\epsilon}$.
	
	From \lemref{lem_delta_min} we have $\Delta^k\geq\Delta_{\min}$ for all $k\leq k_{\epsilon}$.
	We note that $\Delta^k$ is reduced by a factor $\gamma_{\rm dec}$ for every iteration of the criticality phase in $\mathcal{C}^{U}_{\epsilon}$.
	Thus by a more careful reasoning as we used to reach \eqref{eq_unsuccessful_count}, we conclude 
	\begin{align}
		\Delta_{\min} &\leq \Delta^0 \gamma_{\rm inc}^{|\mathcal{S}_{\epsilon}|} \gamma_{\rm dec}^{|\mathcal{U}_{\epsilon}| + |\mathcal{C}^{U}_{\epsilon}|}, \\
		|\mathcal{C}^{U}_{\epsilon}| &\leq \frac{\log\gamma_{\rm inc}}{|\log \gamma_{\rm dec}|}|\mathcal{S}_{\epsilon}| + \frac{\log(\Delta^0/\Delta_{\min})}{|\log \gamma_{\rm dec}|} - |\mathcal{U}_{\epsilon}|. \label{eq_criticality_count2}
	\end{align}
	Also, after every iteration $k$ in which the first iteration of criticality phase makes $m^k$ fully linear, we have either a (very) successful or unsuccessful step, not a model-improving step.
	From the same reasoning as in \lemref{lem_delta_min}, the accuracy phase can only cause at most one more step criticality phase in which $m^k$ is made fully linear, regardless of how many times it is called.\footnote{Of course, there may be many more initial steps of the criticality phase in which $m^k$ is already fully linear, but no work is required in this case.}
	Thus,
	\begin{align}
		|\mathcal{C}^{M}_{\epsilon}| &\leq 2\left(|\mathcal{S}_{\epsilon}| + |\mathcal{U}_{\epsilon}|\right). \label{eq_criticality_count}
	\end{align}
	Combining \eqref{eq_criticality_count2} and \eqref{eq_criticality_count} with \eqref{eq_unsuccessful_count} and \eqref{eq_model_improving_count}, we conclude that the number of times we make $m^k$ fully linear is 
	\begin{align}
		|\mathcal{M}_{\epsilon}| &+ |\mathcal{C}^{U}_{\epsilon}| + |\mathcal{C}^{M}_{\epsilon}|  \nonumber \\
		&\leq \left(3 + 3\frac{\log\gamma_{\rm inc}}{|\log \gamma_{\rm dec}|}\right)|\mathcal{S}_{\epsilon}| + 3\frac{\log(\Delta^0/\Delta_{\min})}{|\log \gamma_{\rm dec}|}, \\
		&\leq \left(3 + 3\frac{\log\gamma_{\rm inc}}{|\log \gamma_{\rm dec}|}\right)\frac{2 (\kappaeg+1) f(\theta^0)}{(\eta_1-2\eta_1^{\prime})\epsilon\Delta_{\min}} \nonumber \\
		&\qquad\qquad\qquad\qquad + 3\frac{\log(\Delta^0/\Delta_{\min})}{|\log \gamma_{\rm dec}|},
	\end{align}
	where the second inequality follows from \propref{prop_succ_complexity}.
	
	If $\epsilon<1$, we conclude that the number of times we make $m^k$ fully linear before $\|\nabla f(\theta^k)\|<\epsilon$ for the first time is the same as the number of iterations, $\bigO(\kappa^3\epsilon^{-2})$.
	Since each iteration requires one new objective evaluation (at $\theta^k+s^k$) and each time we make $m^k$ fully linear requires at most $\bigO(d)$ objective evaluations (corresponding to replacing the entire interpolation set), we get the stated evaluation complexity bound.
\end{proof}

\subsection{Estimating the Lower-Level Work}
We have from \corref{cor_complexity} that we can achieve $\|\nabla f(\theta^k)\|<\epsilon$ in $\bigO(\epsilon^{-2})$ evaluations of $\t{r}(\theta)$.
In this section, we use the fact that evaluations of $\t{r}(\theta)$ come from finitely terminating a linearly-convergent procedure (i.e.~strongly convex optimization) to estimate the total work required in the lower-level problem.
This is particularly relevant in an imaging context, where the lower-level problem can be large-scale and poorly-conditioned; this can be the dominant cost of \algref{alg_dynamic_accuracy_dfo}.

\begin{proposition} \label{prop_lower_level_work}
	Suppose Assumptions \ref{ASS:SMOOTHNESS} and \ref{ass_cauchy_bdd_hess} hold and $\|\nabla f(\theta^k)\|\geq\epsilon$ for all $k=0,\ldots,k_{\epsilon}$ and some $\epsilon\in(0,1]$.
	Then for every objective evaluation in iterations $k\leq k_{\epsilon}$ it suffices to guarantee that $\|\t{x}_i(\theta)-\hat{x}_i(\theta)\| = \bigO(\epsilon^2)$ for all $i=1,\ldots,n$.
\end{proposition}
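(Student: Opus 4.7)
The plan is to identify every point at which \algref{alg_dynamic_accuracy_dfo} imposes an accuracy requirement on a lower-level evaluation during iterations $k\le k_\epsilon$, and verify each is implied by $\|\t{x}_i(\theta)-\hat{x}_i(\theta)\| = \bigO(\epsilon^2)$. Two such requirements arise: forming a fully linear model, and satisfying the accuracy condition \eqref{EQ:MIN_ACCURACY}.

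For the first, whenever the algorithm builds or restores a fully linear model in $B(\theta^k,\Delta^k)$ (in the criticality phase, in a model-improvement step, or after an accepted move), \lemref{lem_fully_linear} requires $\|\t{x}_i(z^t)-\hat{x}_i(z^t)\|\le c(\Delta^k)^2$ at each interpolation point. By \lemref{lem_delta_min}, $\Delta^k\ge\Delta_{\min}$ for all $k\le k_\epsilon$, and $\Delta_{\min}$ scales linearly in $\epsilon$ (with prefactor depending only on $\kappa$). Hence a uniform accuracy of $c\Delta_{\min}^2 = \bigO(\epsilon^2)$ suffices to guarantee $c(\Delta^k)^2$-accuracy in every trust region encountered.

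For the second, the accuracy phase requires $\max(\delta^k,\delta_+^k)\le\eta_1^\prime[m^k(0)-m^k(s^k)]$. Combining the Cauchy decrease bound from \assref{ass_cauchy_bdd_hess} with the lower bound $\|g^k\|\ge\epsilon/(\kappaeg+1)$ from \lemref{lem_eps_g}, together with $\Delta^k\ge\Delta_{\min}=\Theta(\epsilon)$ and $\|H^k\|+1\le\kappa_H$, gives $m^k(0)-m^k(s^k) = \Omega(\epsilon^2)$. Thus it suffices to evaluate $\t{f}(\theta^k)$ and $\t{f}(\theta^k+s^k)$ to an absolute accuracy $\bigO(\epsilon^2)$. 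Setting $\delta_x:=\max_i\|\t{x}_i(\theta)-\hat{x}_i(\theta)\|$, \lemref{lem_inexact_eval_accuracy} gives $|\t{f}(\theta)-f(\theta)|\le 2\sqrt{f(\theta)}\,\delta_x+\delta_x^2$. By \assref{ASS:SMOOTHNESS}, $\sqrt{f(\theta)}=\|r(\theta)\|/\sqrt{n}\le r_{\max}/\sqrt{n}$ is uniformly bounded on $\mathcal{B}$, and since $\epsilon\in(0,1]$ the quadratic term is $\bigO(\epsilon^4)\le\bigO(\epsilon^2)$. Hence taking $\delta_x=\bigO(\epsilon^2)$ yields $|\t{f}-f|=\bigO(\epsilon^2)$ as required.

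The only real obstacle is bookkeeping: showing that both distinct requirements reduce to the same rate. This hinges on two observations already established, namely $\Delta_{\min}=\Theta(\epsilon)$ from \lemref{lem_delta_min} and the uniform boundedness of $\sqrt{f(\theta)}$ on $\mathcal{B}$ from \assref{ASS:SMOOTHNESS}, after which a single uniform choice $\delta_x=\bigO(\epsilon^2)$ handles every lower-level evaluation encountered before $k_\epsilon$.
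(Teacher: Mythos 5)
Your proposal is correct and follows essentially the same route as the paper's proof: the same decomposition into the two accuracy requirements (full linearity via \lemref{lem_fully_linear} and the condition \eqref{EQ:MIN_ACCURACY}), the same reliance on \lemref{lem_eps_g}, \lemref{lem_delta_min} and \assref{ass_cauchy_bdd_hess} to get $m^k(0)-m^k(s^k)=\Omega(\epsilon\Delta_{\min})$ with $\Delta_{\min}=\Theta(\epsilon)$, and the same use of \lemref{lem_inexact_eval_accuracy} with uniform boundedness of $f$ on $\mathcal{B}$. The only cosmetic difference is that you apply the forward bound $|\t{f}-f|\leq 2\sqrt{f}\,\delta_x+\delta_x^2$ directly, whereas the paper inverts it via $\sqrt{f+\delta_f^{\min}}-\sqrt{f}$ and a fundamental-theorem-of-calculus estimate; both yield the same $\bigO(\epsilon^2)$ conclusion.
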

\begin{proof}
	For all $k\leq k_{\epsilon}$ we have $\|g^k\|\geq\epsilon/(\kappaeg+1)$ and $\Delta^k\geq\Delta_{\min}$ by \lemref{lem_eps_g} and \lemref{lem_delta_min} respectively.
	There are two places where we require upper bounds on $\|\t{x}_i(\theta)-\hat{x}_i(\theta)\|$ in our objective evaluations: ensuring $\t{f}(\theta^k)$ and $\t{f}(\theta^k+s^k)$ satisfy \eqref{EQ:MIN_ACCURACY} and ensuring our model is fully linear using \lemref{lem_fully_linear}.
	
	In the first case, we note that
	\begin{align}
		m^k(0) - m^k(s^k) &\geq \frac{1}{2} \frac{\epsilon}{\kappaeg+1} \min\left(\Delta_{\min}, \frac{\epsilon}{\kappa_H(\kappaeg+1)}\right), \\
		&= \frac{1}{2} \frac{\epsilon}{\kappaeg+1}\Delta_{\min},
	\end{align}
	by \assref{ass_cauchy_bdd_hess} and using $\Delta_{\min} < c_0\epsilon/(\kappaeg+1) \leq \epsilon/[\kappa_H(\kappaeg+1)]$ by definition of $\Delta_{\min}$ \eqref{eq_delta_min_dfo} and $c_0$ \eqref{eq_c0}.
	Therefore to ensure \eqref{EQ:MIN_ACCURACY} it suffices to guarantee
	\begin{align}
		\max &\left(|\t{f}(\theta^k) - f(\theta^k)|, |\t{f}(\theta^k+s^k) - f(\theta^k+s^k)|\right) \nonumber \\
		&\leq \delta_f^{\min} := \frac{1}{2} \eta_1^{\prime} \frac{\epsilon}{\kappaeg+1}\Delta_{\min}.
	\end{align}
	From \lemref{lem_inexact_eval_accuracy}, specifically the second part of \eqref{EQ:DELTA_BOUND1}, this means to achieve \eqref{EQ:MIN_ACCURACY} it suffices to guarantee
	\begin{align}
		\|\t{x}_i(\theta)-\hat{x}_i(\theta)\| \leq \sqrt{f(\theta)+\delta_f^{\min}} - \sqrt{f(\theta)},
	\end{align}
	for all $i=1,\ldots,n$, where $\theta\in\cup_{k\leq k_{\epsilon}} \{\theta^k, \theta^k+s^k\}$.
	From \assref{ASS:SMOOTHNESS} we have $f(\theta)\leq f_{\max} := r_{\max}^2/n$, and so from the fundamental theorem of calculus we have
	\begin{align}
		\sqrt{f(\theta)+\delta_f^{\min}} &- \sqrt{f(\theta)} = \int_{f(\theta)}^{f(\theta)+\delta_f^{\min}} \frac{1}{2\sqrt{t}} dt, \\
		&\geq \frac{\delta_f^{\min}}{2\sqrt{f(\theta)+\delta_f^{\min}}} \geq \frac{\delta_f^{\min}}{2\sqrt{f_{\max}+\delta_f^{\min}}}.
	\end{align}
	Since $\epsilon<1$, $\delta_f^{\min}$ is bounded above by a constant and so $\sqrt{f_{\max}+\delta_f^{\min}}$ is bounded above.
	Thus \eqref{EQ:MIN_ACCURACY} is achieved provided $\|\t{x}_i(\theta)-\hat{x}_i(\theta)\| = \bigO(\delta_f^{\min})$ for all $i=1,\ldots,n$.
	
	For the second case (ensuring full linearity), we need to guarantee \eqref{eq_desired_accuracy} holds.
	This is achieved provided $\|\t{x}_i(\theta)-\hat{x}_i(\theta)\| = \bigO(\Delta_{\min}^2)$ for all $i=1,\ldots,n$.
	The result then follows by noting $\delta_f^{\min}=\bigO(\epsilon\Delta_{\min})$ and $\Delta_{\min}=\bigO(\epsilon)$.
\end{proof}

\corref{cor_complexity} and \propref{prop_lower_level_work} say that to ensure $\|\nabla f(\theta^k)\|<\epsilon$ for some $k$, we have to perform $\bigO(d\kappa^3\epsilon^{-2})$ upper-level objective evaluations, each requiring accuracy at most  $\|\t{x}_i(\theta)-\hat{x}_i(\theta)\| = \bigO(\epsilon^2)$ for all $i$.
Since our lower-level evaluations correspond to using GD/FISTA to solve a strongly convex problem, the computational cost of each upper-level evaluation is $\bigO(n\log(\epsilon^{-2}))$ provided we have reasonable initial iterates.
From this, we conclude that the total computational cost before achieving $\|\nabla f(\theta^k)\| <\epsilon$ is at most $\bigO(\epsilon^{-2}\log(\epsilon^{-1}))$ iterations of the lower-level algorithm.
However, this is a conservative approach to estimating the cost: many of the iterations correspond to $\|\nabla f(\theta^k)\|\gg\epsilon$, and so the work required for these is less.
This suggests the question: \emph{can we more carefully estimate the work required at different accuracy levels to prove a lower $\epsilon$-dependence on the total work?}
We now argue that this is not possible without further information about asymptotic convergence rates (e.g.~local convergence theory).
For simplicity we drop all constants and $\bigO(\cdot)$ notation in the below.

Suppose we count the work required to achieve progressively higher accuracy levels $1\geq\epsilon_0>\epsilon_1>\cdots>\epsilon_N:=\epsilon$ for some desired accuracy $\epsilon\ll 1$.
Since each $\epsilon_i<1$, we assume that we require $\epsilon_i^{-2}$ evaluations to achieve accuracy $\epsilon_i$, where each evaluation requires $\log(\epsilon_i^{-1})$ computational work.
We may choose $\epsilon_0<1$, since the cost to achieve accuracy $\epsilon_0$ is fixed (i.e.~independent of our desired accuracy $\epsilon$), so does not affect our asymptotic bounds.
Counting the total lower-level problem work---which we denote $W(\epsilon)$---in this way, we get
\begin{align}
	W(\epsilon) = W(\epsilon_0) + \sum_{i=1}^{N}\left(\epsilon_i^{-2}-\epsilon_{i-1}^{-2}\right)\log(\epsilon_i^{-1}). \label{eq_work_approx}
\end{align}
The second term of \eqref{eq_work_approx} corresponds to a right Riemann sum approximating $\int_{\epsilon_0^{-2}}^{\epsilon_N^{-2}}\log(\sqrt{x})dx$.
Since $x\to \log(\sqrt{x})=\log(x)/2$ is strictly increasing, the right Riemann sum overestimates the integral; hence
\begin{align}
	W(\epsilon) &\geq W(\epsilon_0) + \frac{1}{2}\int_{\epsilon_0^{-2}}^{\epsilon^{-2}}\log(x) dx, \\
	&= W(\epsilon_0) + \frac{1}{2}\left[\epsilon^{-2}(\log(\epsilon^{-2}) -1) - \epsilon_0^{-2}(\log(\epsilon_0^{-2})-1)\right],
\end{align}
independent of our choices of $\epsilon_1,\ldots,\epsilon_{N-1}$.
That is, as $\epsilon\to 0$, we have $W(\epsilon)\sim\epsilon^{-2}\log(\epsilon^{-1})$, so our na\"ive estimate is tight.

We further note that this na\"ive bound applies more generally.
Suppose the work required for a single evaluation of the lower-level objective to accuracy $\epsilon$ is $w(\epsilon^{-2})\geq 0$ (e.g.~$w(x)=\log(x)/2$ above).
Assuming $w$ is increasing (i.e.~higher accuracy evaluations require more work), we get, similarly to above,
\begin{align}
    W(\epsilon) \geq W(\epsilon_0) +  \int_{\epsilon_0^{-2}}^{\epsilon^{-2}} w(x) dx.
\end{align}
Since $w$ is increasing and nonnegative, by 
\begin{align}
    \int_{\epsilon_0^{-2}}^{\epsilon^{-2}} w(x) dx &\geq \int_{(\epsilon_0^{-2}+\epsilon^{-2})/2}^{\epsilon^{-2}} w(x) dx, \\
    &\geq \frac{\epsilon_0^{-2}+\epsilon^{-2}}{2}w\left(\frac{\epsilon_0^{-2}+\epsilon^{-2}}{2}\right),
\end{align}
the na\"ive work bound $W(\epsilon)\sim \epsilon^{-2}w(\epsilon^{-2})$ holds provided $w(x) = \bigO(w(x/2))$ as $x\to \infty$; that is, $w(x)$ does not increase too quickly.
This holds in a variety of cases, such as $w(x)$ bounded, concave or polynomial (but not if $w(x)$ grows exponentially).
In particular, this holds for $w(x)\sim\log(x)/2$ as above, and $w(x)\sim x^{1/2}$ and $w(x)\sim x$, which correspond to the work required (via standard sublinear complexity bounds) if the lower-level problem is a strongly convex, convex or nonconvex optimization problem respectively.

\section{Numerical Results} \label{sec_numerics}

\subsection{Upper-level solver (DFO-LS)}
We implement the dynamic accuracy algorithm (\algref{alg_dynamic_accuracy_dfo}) in DFO-LS \cite{Cartis2018}, an open-source Python package which solves nonlinear least-squares problems subject to bound constraints using model-based DFO.\footnote{Available at \url{https://github.com/numericalalgorithmsgroup/dfols}.}
As described in \cite{Cartis2018}, DFO-LS has a number of modifications compared to the theoretical algorithm \algref{alg_dynamic_accuracy_dfo}.
The most notable modifications here are that DFO-LS:
\begin{itemize}
    \item Allows for bound constraints (and internally scales variables so that the feasible region is $[0,1]$ for all variables);
    \item Does not implement a criticality phase;
    \item Uses a simplified model-improving step;
    \item Maintains two trust-region radii to avoid decreasing $\Delta^k$ too quickly;
    \item Implements a `safety phase', which treats iterations with short steps $\|s^k\|\ll\Delta^k$ similarly to unsuccessful iterations.
\end{itemize}
More discussion on DFO-LS can be found~\cite{Cartis2019a,Cartis2018}.

Here, we use DFO-LS v1.1.1, modified for the dynamic accuracy framework as described above.
When determining the accuracy level for a given evaluation, we require accuracy level $\delta_x=10(\Delta^k)^2$ for all evaluations (c.f.~\lemref{lem_fully_linear}), and also \eqref{EQ:MIN_ACCURACY} when checking objective decrease \eqref{EQ:RHO_NOISY}.

\subsection{Application: 1D Image Denoising} \label{sec_denoising}
In this section, we consider the application of DFO-LS to the problem of learning the regularization and smoothing parameters for the image denoising model \eqref{EQ:LOWER:APPROX:TV} as described in \secref{sec_tv_denoising}.
We use training data constructed using the method described in \secref{sec_data_model} with $N=256$ and $\sigma=0.1$.

\paragraph{1-parameter case}
The simplest example we consider is the 1-parameter case, where we only wish to learn $\alpha$ in \eqref{EQ:LOWER:APPROX:TV}. We fix $\nu = \xi =10^{-3}$ and use a training set of $n=10$ randomly-generated images.
We choose $\alpha = 10^\theta$, optimize over $\theta$ within bounds $\theta \in[-7,7]$ with starting value $\theta^0=0$.
We do not regularize this problem, i.e.~$\mathcal{J}=0$.

\paragraph{3-parameter case}
We also consider the more complex problem of learning three parameters for the denoising problem (namely $\alpha$, $\nu$ and $\xi$).
We choose to penalize a large condition number of the lower-level problem, thus promotes efficient solution of the lower-level problem after training. To be precise we choose
\begin{align}
\mathcal{J}(\alpha,\nu,\xi) = \left(\frac{L(\alpha,\nu,\xi)}{\mu(\alpha,\nu,\xi)}\right)^2
\end{align}
where $L$ and $\mu$ are the smoothness and strong convexity constants given in \secref{sec_tv_denoising}.

The problem is solved using the parametrization $\alpha = 10^{\theta_1}, \nu = 10^{\theta_2}$ and $\xi = 10^{\theta_3}$.
Here, we use a training set of $n=20$ randomly-generated images, and optimize over $\theta \in [-7,7] \times [-7,0]^2$.
Our default starting value is $\theta^0 = (0, -1, -1)$ and our default choice of upper-level regularization parameter is $\beta=10^{-6}$.

\paragraph{Solver settings}
We run DFO-LS with a budget of 20 and 100 evaluations of the upper-level objective $f$ for the 1- and 3-parameter cases respectively, and with $\rho_{\rm end}=10^{-6}$ in both cases.
We compare the dynamic accuracy variant of DFO-LS (given by \algref{alg_dynamic_accuracy_dfo}) against two variants of DFO-LS (as originally implemented in \cite{Cartis2018}):
\begin{enumerate}
    \item Low-accuracy evaluations: each value $\hat x_i$ received by DFO-LS is inaccurately estimated via a fixed number of iterations of GD/FISTA; we use 1,000 iterations of GD and 200 iterations of FISTA.
    \item High-accuracy evaluations: each value $z_i$ received by DFO-LS is estimated using 10,000 iterations of GD or \revision{2,000} iterations of FISTA.
\end{enumerate}
We estimate $\delta_f$ in the plots below by taking $\delta_r$ to be the maximum estimate of $\|\hat x_i(\theta)-x_i\|$ for each $i=1,\ldots,n$.
When running the lower-level solvers, our starting point is the final reconstruction from the previous upper-level evaluation, which we hope is a good estimate of the solution.

\paragraph{1-parameter denoising results}
In \figref{fig_denoising1_comparison} we compare the six algorithm variants (low, high and dynamic accuracy versions of both GD and FISTA) on the 1-parameter denoising problem.
Firstly in Figures \ref{fig_denoising1_comparison_objective} and \ref{fig_denoising1_comparison_objective_high_acc}, we show the best upper-level objective value observed against `computational cost', measured as the total GD/FISTA iterations performed (over all upper-level evaluations).
For each variant, we plot the value $\t{f}(\theta)$ and the uncertainty range $\t{f}(\theta)\pm\delta_f$ associated with that evaluation.
In \figref{fig_denoising1_comparison_param} we show the best $\alpha_{\theta}$ found against the same measure of computational cost.

\begin{figure}
    \centering
    \subfloat[Objective value $f(\theta)$]{\label{fig_denoising1_comparison_objective}\includegraphics[width=6cm, height=\HeightGr]{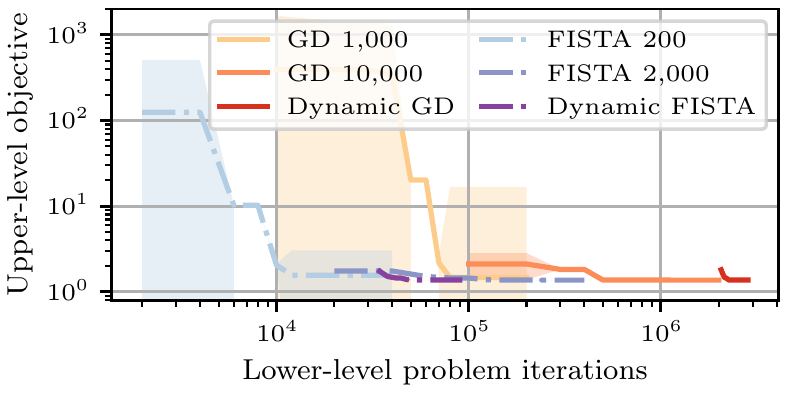}}
    \hspace{1cm}
    \subfloat[Objective value $f(\theta)$, zoomed in]{\label{fig_denoising1_comparison_objective_high_acc}\includegraphics[width=6cm, height=\HeightGr]{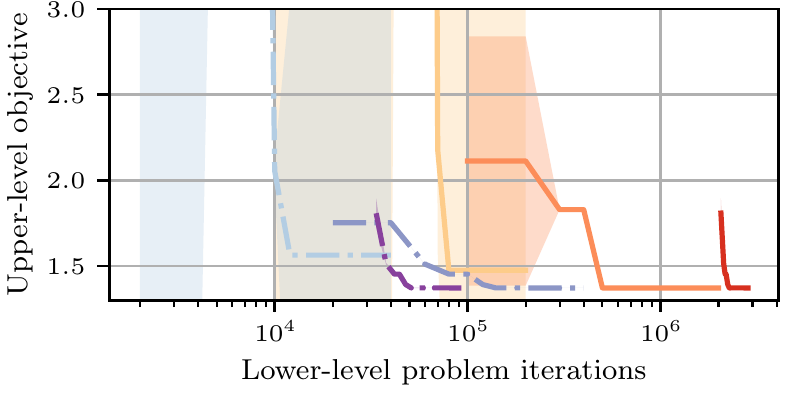}}
    \\
    \subfloat[Parameter value $\alpha_{\theta}$]{\label{fig_denoising1_comparison_param}\includegraphics[width=6cm, height=\HeightGr]{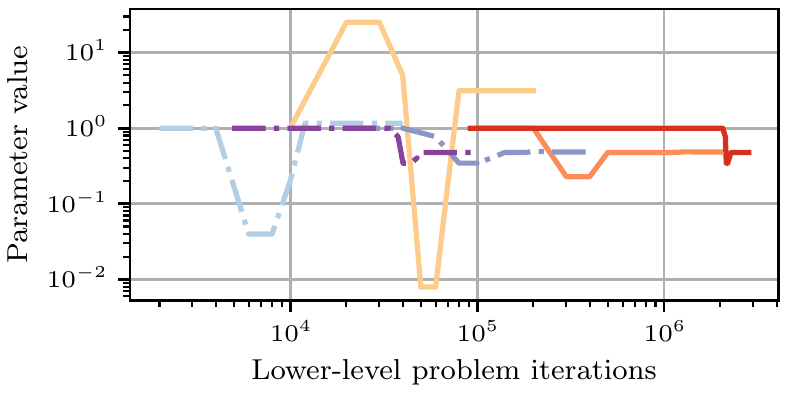}}
    \hspace{1cm}
    \subfloat[Cumulative GD/FISTA iterations per upper-level evaluation]{\label{fig_denoising1_comparison_iters_cumulative}\includegraphics[width=6cm, height=\HeightGr]{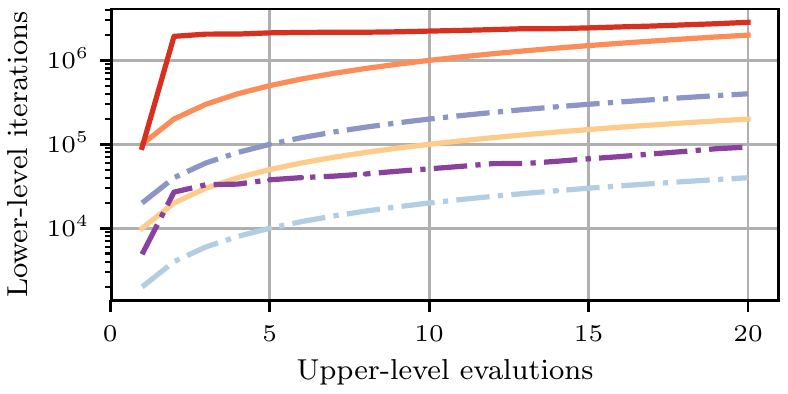}}
	\caption{Results for the 1-parameter denoising problem.}
	\label{fig_denoising1_comparison}

\vspace{\floatsep}
    \includegraphics[width=8cm,height=3.5cm]{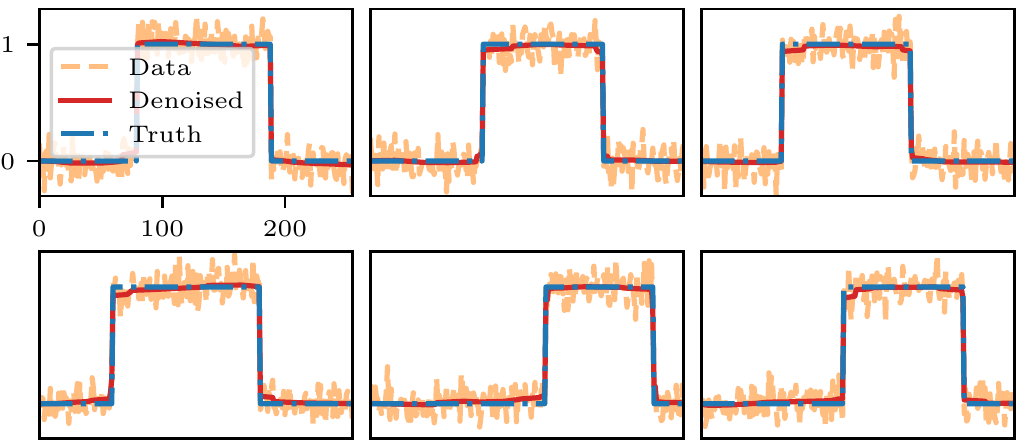}
	\caption{1-parameter final reconstructions (dynamic accuracy FISTA but all except low-accuracy GD look basically the same). Reconstructions are calculated by using the final $\theta$ returned from the given DFO-LS variant, and solving \eqref{EQ:LOWER:APPROX} with 1,000 iterations of FISTA.}
	\label{fig_denoising1_reconstructions}
\end{figure}

\begin{figure}
    \centering
    \subfloat[Start $\theta^0=1$]{\includegraphics[width=6cm, height=\HeightGr]{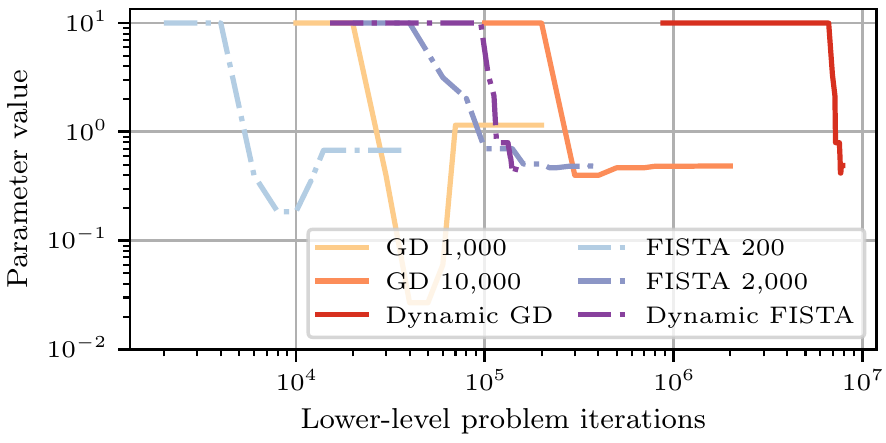}}
    \hfill
    \subfloat[Start $\theta^0=-1$]{\includegraphics[width=6cm, height=\HeightGr]{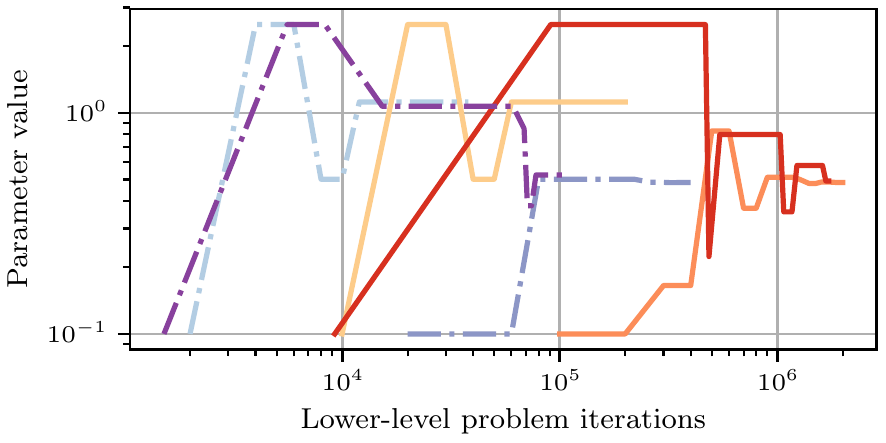}}
    \hfill
    \subfloat[Start $\theta^0=-2$]{\includegraphics[width=6cm, height=\HeightGr]{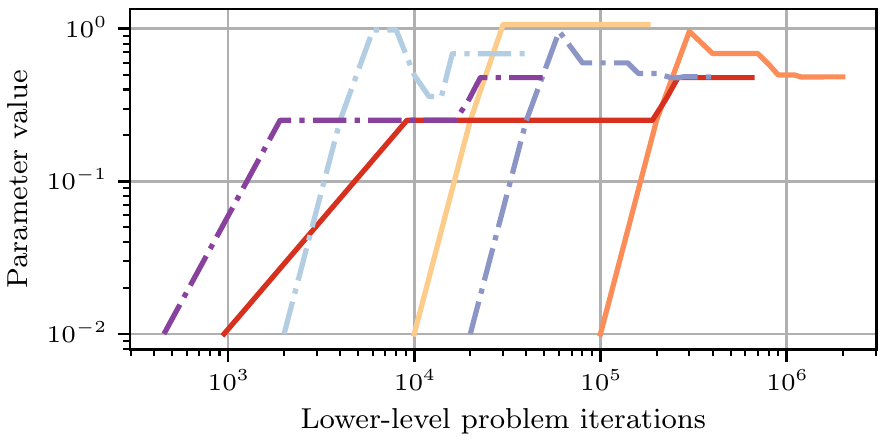}}
    \caption{1-parameter results: optimal $\alpha_{\theta}$ found when using different initial values $\theta^0$ (compare \figref{fig_denoising1_comparison_param}).}
	\label{fig_denoising1_starting_point}
	
\vspace{\floatsep}
    \includegraphics[width=8cm, height=2.5cm]{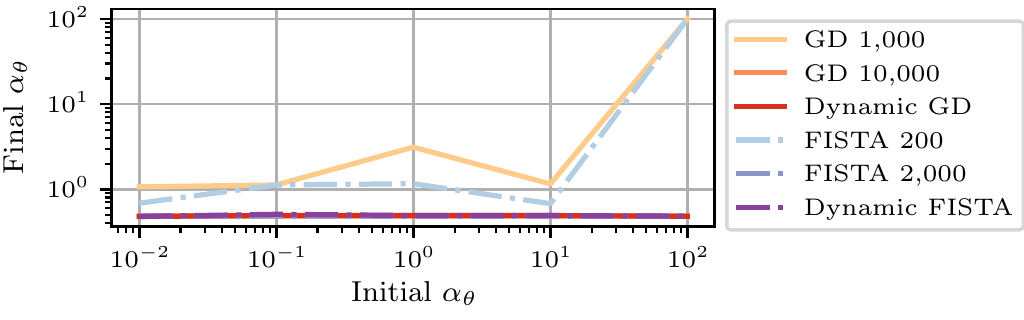}
    \caption{1-parameter results: compare optimal $\alpha_{\theta}$ values found for different choices of starting points.}
	\label{fig_denoising1_starting_point_compare}
\end{figure}

We see that both low-accuracy variants do not converge to the optimal $\theta$.
Both high-accuracy variants converge to the same objective value and $\theta$, but take much more computational effort to do this.
Indeed, we did not know a priori how many GD/FISTA iterations would be required to achieve convergence.
By contrast, both dynamic accuracy variants find the optimal $\theta$ without any tuning.

Moreover, dynamic accuracy FISTA converges faster than high-accuracy FISTA, but the reverse is true for GD.
In \figref{fig_denoising1_comparison_iters_cumulative} we show the cumulative number of GD/FISTA iterations performed after each evaluation of the upper-level objective.
We see that the reason for dynamic accuracy GD converging slower than than high-accuracy GD is that the initial upper-level evaluations require many GD iterations; the same behavior is seen in dynamic accuracy FISTA, but to a lesser degree.
This behavior is entirely determined by our (arbitrary) choices of $\theta^0$ and $\Delta^0$.
We also note that the number of GD/FISTA iterations required by the dynamic accuracy variants after the initial phase is much lower than both the fixed accuracy variants.
\revision{The difference between the GD and FISTA behavior in \figref{fig_denoising1_comparison_iters_cumulative} is based on how the initial dynamic accuracy requirements compares to the chosen number of high-accuracy iterations (10,000 GD or 2,000 FISTA).}
Finally, in \figref{fig_denoising1_reconstructions} we show the reconstructions achieved using the $\alpha_{\theta}$ found by dynamic accuracy FISTA.
All reconstructions are close to the ground truth, with a small loss of contrast.

To further understand the impact of the initial evaluations and the robustness of our framework, in \figref{fig_denoising1_comparison} we run the same problem with different choices $\theta^0\in\{-2,-1,1\}$ (where $\theta^0=0$ before). 
In \figref{fig_denoising1_starting_point} we show best $\alpha_{\theta}$ found for a given computational effort for these choices.
When $\theta^0>0$, the lower-level problem is starts more ill-conditioned, and so the first upper-level evaluations for the dynamic accuracy variants require more GD/FISTA iterations.
However, when $\theta^0<0$, we initially have a well-conditioned lower-level problem, and so the dynamic accuracy variants require many fewer GD/FISTA iterations initially, and they converge at the same or a faster rate than the high-accuracy variants.

These results also demonstrate that the dynamic accuracy variants give a final regularization parameter which is robust to the choice of $\theta^0$.
In \figref{fig_denoising1_starting_point_compare} we plot the final learned $\alpha_{\theta}$ value compared to the initial choice of $\alpha_{\theta}$ for all variants.
The low-accuracy variants do not reach a consistent minimizer for different starting values, but the dynamic and high-accuracy variants both reach the same minimizer for all starting points.
Thus although our upper-level problem is nonconvex, we see that our dynamic accuracy approach can produce solutions which are robust to the choice of starting point.

\paragraph{3-parameter denoising results}
Next, we consider the 3-parameter ($\alpha_{\theta}$, $\nu_{\theta}$ and $\xi_{\theta}$) denoising problem.
\begin{figure}
\centering
    \subfloat[Objective value $f(\theta)$]{\label{fig_denoising3_obj}\includegraphics[width=6cm, height=\HeightGr]{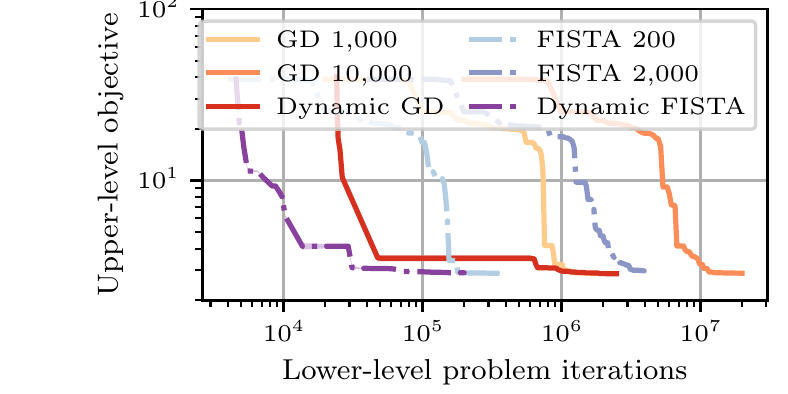}}
    \\
    \subfloat[Cumulative GD/FISTA iterations per upper-level evaluation]{\label{fig_denoising3_iters_cumulative}\includegraphics[width=6cm, height=\HeightGr]{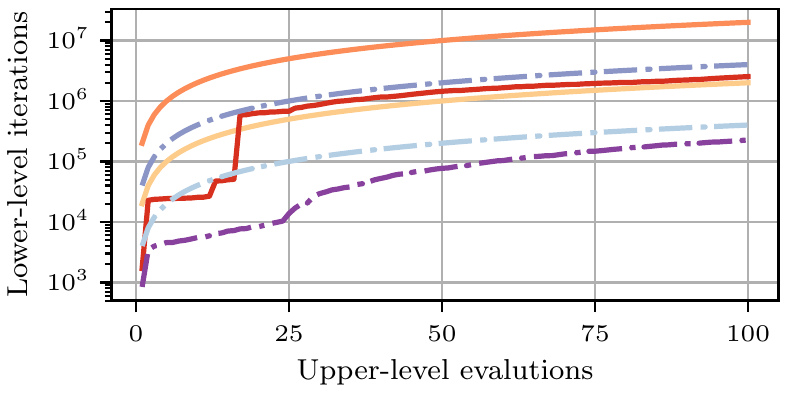}}
	\caption{Results for the 3-parameter 1D denoising problem.}
	\label{fig_denoising3}
\vspace{\floatsep}
\vspace{-0.3cm}
    \includegraphics[width=8cm, height=3.5cm]{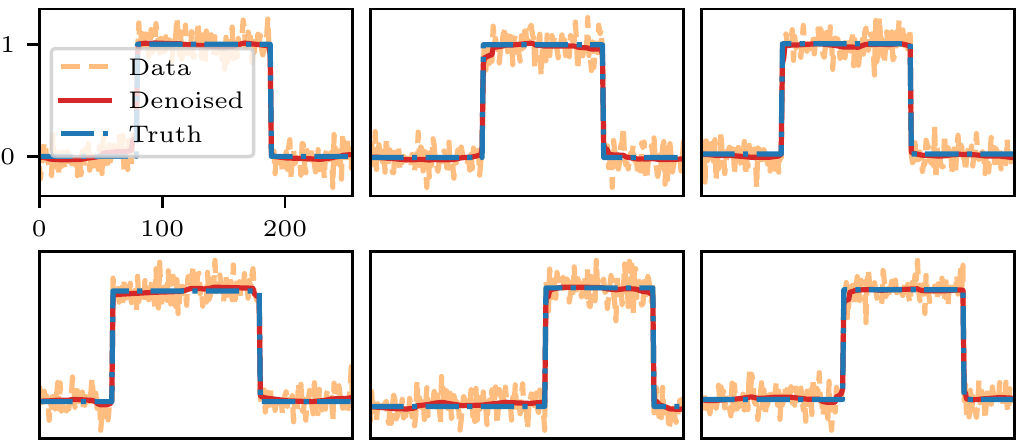}
	\caption{Example 3-parameter final reconstructions (dynamic accuracy FISTA but all other variants are similar). Reconstructions use the final $\theta$ returned by DFO-LS and solving \eqref{EQ:LOWER:APPROX} with 1,000 FISTA iterations.}
	\label{fig_denoising3_recons}
\vspace{\floatsep}
\vspace{-0.3cm}
    \includegraphics[width=8cm, height=3.5cm]{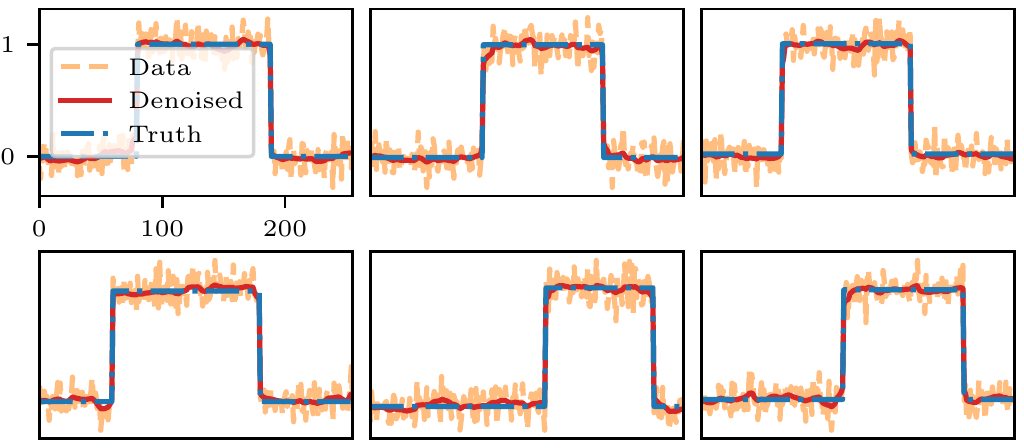}
    \caption{Example 3-parameter final reconstructions for dynamic accuracy FISTA with $\beta=10^{-4}$. Compare with reconstructions with $\beta=10^{-6}$ shown in \figref{fig_denoising3_recons}.}
	\label{fig_denoising_recons_alt_beta}
\vspace{\floatsep}
\vspace{-0.3cm}
    \includegraphics[width=8cm, height=3.5cm]{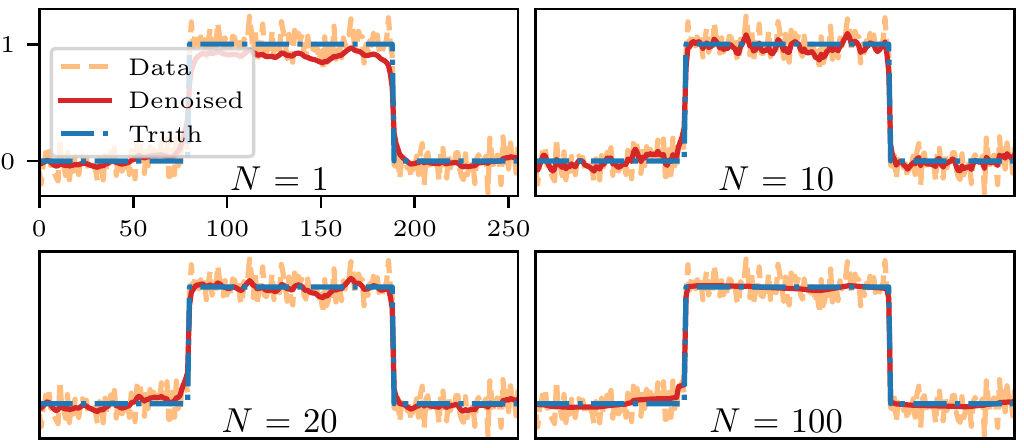}
    \caption{3-parameter results (dynamic accuracy FISTA): reconstructions of first training image using best parameters $\theta$ after $N$ evaluations of upper-level objective (reconstruction based on 1,000 FISTA iterations).}
	\label{fig_denoising_recons_by_iter}
\end{figure}
As shown in \figref{fig_denoising3}, both dynamic accuracy variants (GD and FISTA) achieve the best objective value at least one order of magnitude faster than the corresponding low- and high-accuracy variants.
We note that (for instance) 200 FISTA iterations was insufficient to achieve convergence in the 1-parameter case, but converges here.
By contrast, aside from the substantial speedup in the 3-parameter case, our approach converges in both cases without needing to select the computational effort in advance.

The final reconstructions achieved by the optimal parameters for dynamic accuracy FISTA are shown in \figref{fig_denoising3_recons}.
We note that all variants produced very similar reconstructions (since they converged to similar parameter values), and that all training images are recovered with high accuracy.

Next, we consider the effect of the upper-level regularization parameter $\beta$.
If the smaller $\beta$ value of $10^{-8}$ is chosen, all variants converge to slightly smaller values of $\nu_{\theta}$ and $\xi_{\theta}$ as the original $\beta=10^{-6}$, but produce reconstructions of a similar quality.
However, increasing the value of $\beta$ yields parameters which give noticeably worse reconstructions.
The reconstructions for $\beta=10^{-4}$ are shown in \figref{fig_denoising_recons_alt_beta}.

We conclude by demonstrating in \figref{fig_denoising_recons_by_iter} that, aside from reducing our upper-level objective
, the parameters found by DFO-LS do in fact progressively improve the quality of the reconstructions. 
The figure shows the reconstructions of one training image achieved by the best parameters found (by the dynamic accuracy FISTA variant) after a given number of upper-level objective evaluations.
We see a clear improvement in the quality of the reconstruction as the upper-level optimization progresses.

\subsection{Application: 2D denoising}
Next, we demonstrate the performance of dynamic accuracy DFO-LS on the same 3-parameter denoising problem from \secref{sec_denoising}, but applied to 2D images.
Our training data are the 25 images from the Kodak dataset.\footnote{Available from \url{http://www.cs.albany.edu/~xypan/research/snr/Kodak.html}.}
We select the central $256\times 256$-pixel region of each image, convert to monochrome and add Gaussian noise $N(0,\sigma^2)$ with $\sigma=0.1$ to each pixel independently.
We run DFO-LS for 200 upper-level evaluations with $\rho_{\rm end}=10^{-6}$.
Unlike \secref{sec_denoising}, we find that there is no need to regularize the upper-level problem with the condition number of the lower-level problem (i.e.~$\mathcal{J}(\theta)=0$ for these results).

The resulting objective decrease, final parameter values and cumulative lower-level iterations are shown in \figref{fig_2d_denoising3}.
All variants achieve the same (upper-level) objective value and parameter $\alpha_{\theta}$, but the dynamic accuracy variants achieve this with substantially fewer GD/FISTA iterations compared to the low- and high-accuracy variants.
Interestingly, despite all variants achieving the same upper-level objective value, they do not reach a consistent choice for $\nu_{\theta}$ and $\xi_{\theta}$.

In \figref{fig_2d_recons} we show the reconstructions achieved by the dynamic accuracy FISTA variant for three of the training images.
We see high-quality reconstructions in each case, where the piecewise-constant reconstructions favored by TV regularization are evident.

Lastly, we study the impact of changing the noise level $\sigma$ on the calibrated total variational regularization parameter $\alpha_{\theta}$.
We run DFO-LS with dynamic accuracy FISTA for 200 upper-level evaluations on the same training data, but corrupted with noise level $\sigma$ ranging from $10^{-1}$ (as above) to $10^{-8}$, see \figref{fig_sigma_alpha}.
We see that as $\sigma\to 0$, so does $\alpha_{\theta}$ and $\sigma^2/\alpha_\theta$. Note that this is a common assumption on the parameter choice rule in regularization theory to yield a \textit{convergent} regularization method \cite{Scherzer2008book,Ito2014book}. It is remarkable that the learned optimal parameter also has this property.

\begin{figure}
    \centering
    \subfloat[Objective value $f(\theta)$]{\label{fig_2d_denoising3_obj}\includegraphics[width=6cm, height=\HeightGr]{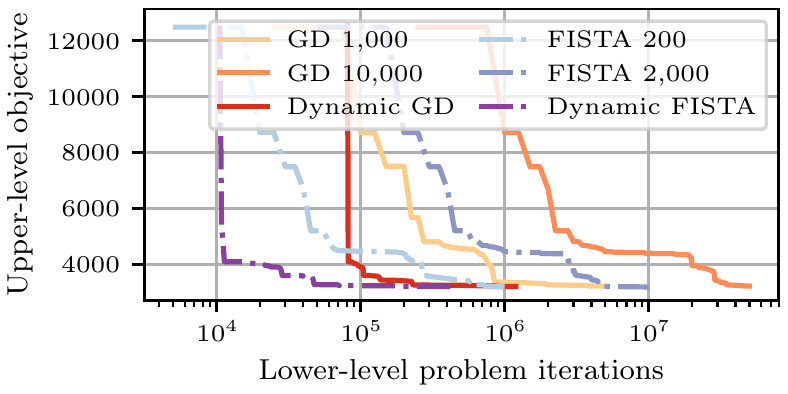}}
    \hspace{1cm}
    \subfloat[Cumulative GD/FISTA iterations per upper-level evaluation]{\label{fig_2d_denoising3_iters_cumulative}\includegraphics[width=6cm, height=\HeightGr]{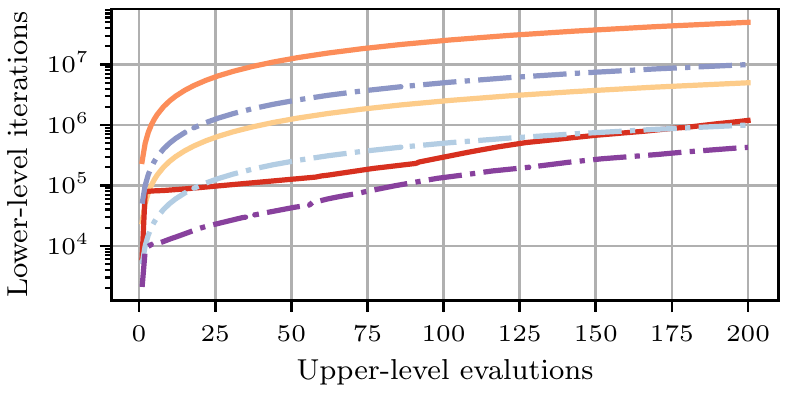}}\\
    \subfloat[Parameter value $\alpha_{\theta}$]{\label{fig_2d_denoising3_param0}\includegraphics[width=6cm, height=\HeightGr]{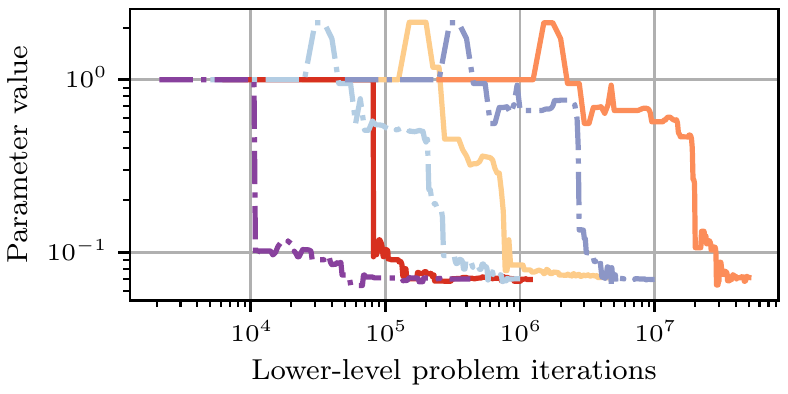}}
    \hfill
    \subfloat[Parameter value $\nu_{\theta}$]{\label{fig_2d_denoising3_param1}\includegraphics[width=6cm, height=\HeightGr]{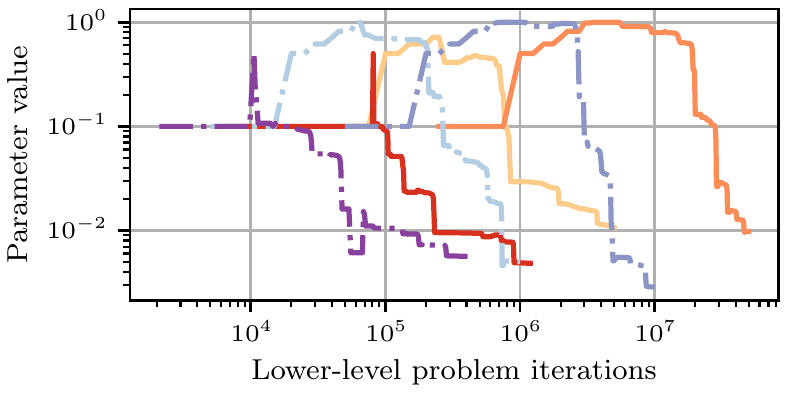}}
    \hfill
    \subfloat[Parameter value $\xi_{\theta}$]{\label{fig_2d_denoising3_param2}\includegraphics[width=6cm, height=\HeightGr]{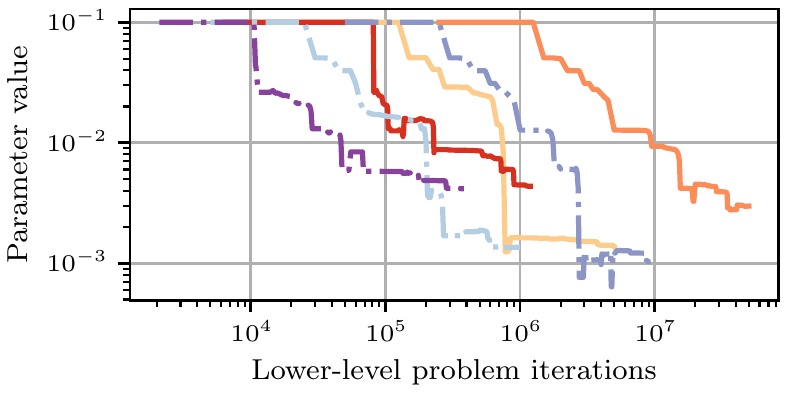}}
	\caption{Results for the 3-parameter denoising problem with 2D images.}
	\label{fig_2d_denoising3}
\end{figure}

\begin{figure}
    \centering
    \includegraphics[height=15cm,width=6cm]{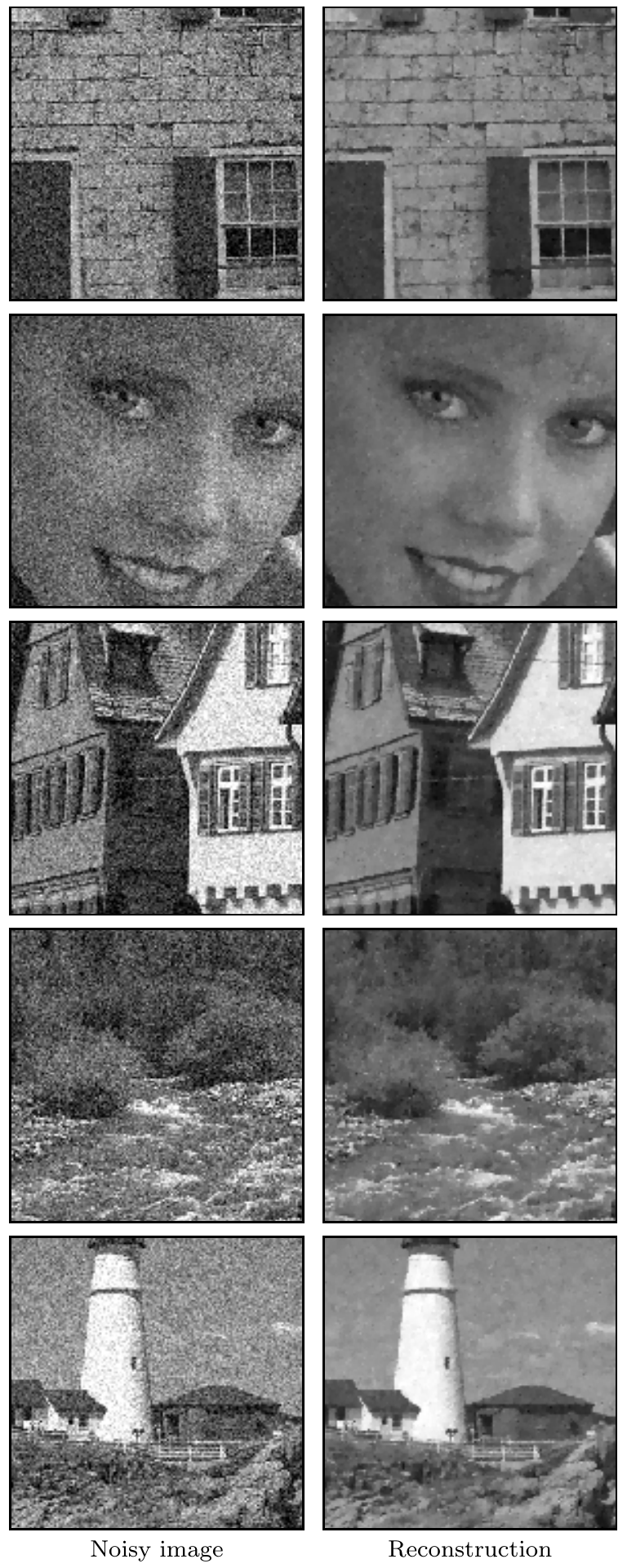}
	\caption{Example reconstructions using denoising parameters (dynamic FISTA DFO-LS variant). Reconstructions generated with 2,000 FISTA iterations of the lower-level problem.}
	\label{fig_2d_recons}
\vspace{\floatsep}
    \includegraphics[width=7cm,height=2.5cm]{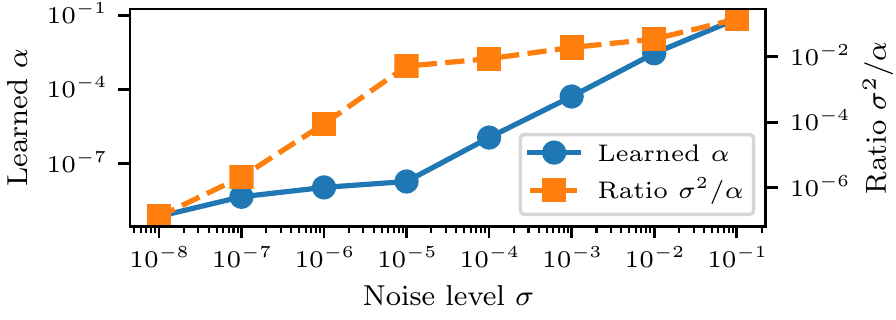}
	\caption{Learned regularization parameter $\alpha_{\theta}$ for 2D TV-denoising with varying noise levels $\sigma$.}
	\label{fig_sigma_alpha}
\end{figure}

\subsection{Application: Learning MRI Sampling Patterns} \label{sec_learning_mri_results}
\begin{figure}
    \centering  
    \subfloat[Objective value $f(\theta)$]{\label{fig_mri_obj}\includegraphics[width=6cm, height=\HeightGr]{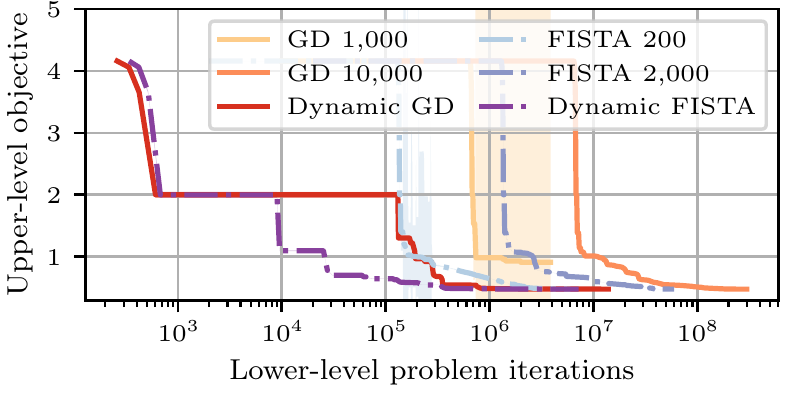}}
    \\
    \subfloat[Cumulative GD/FISTA iterations per upper-level evaluation]{\label{fig_mri_iters_cumulative}\includegraphics[width=6cm, height=\HeightGr]{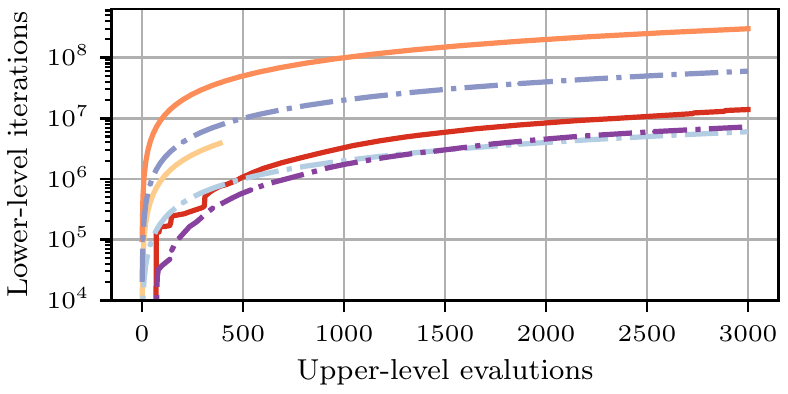}}
	\caption{Results for the MRI sampling problem. Note: the low-accuracy GD variant ($K=1,000$) terminates on a small trust-region radius, as it is unable to make further progress.}
	\label{fig_mri_results}
\vspace{\floatsep}
     \includegraphics[width=8cm, height=4cm]{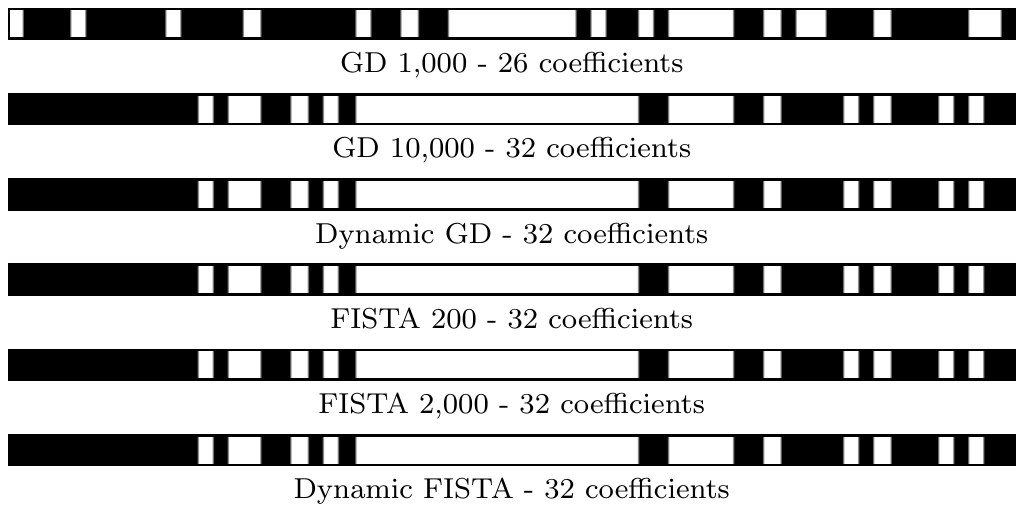}
	\caption{Final (after thresholding) MRI sampling patterns found by each DFO-LS variant. This only shows which Fourier coefficients have $\theta_i>0.001$, it does not show the relative magnitudes of each $\theta_i$.}
	\label{fig_mri_sampling}
\vspace{\floatsep}
    \includegraphics[width=8cm, height=3.5cm]{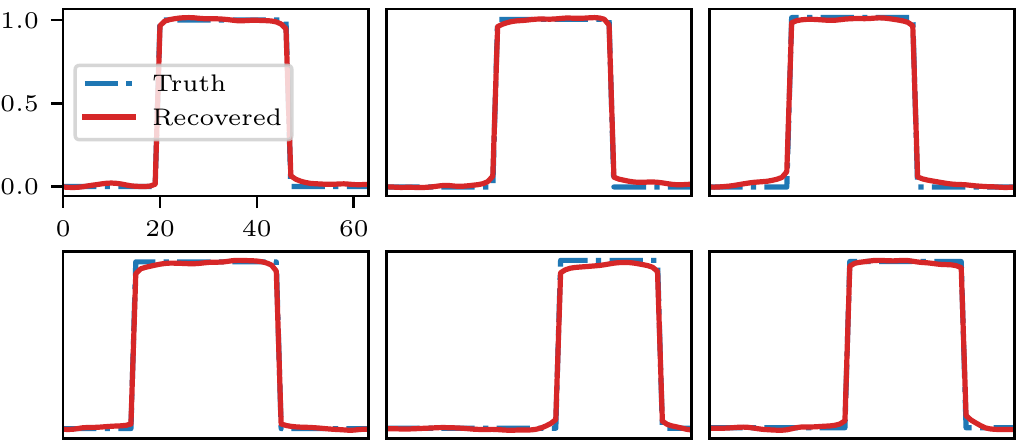}
	\caption{Reconstructions using final (after thresholding) MRI sampling pattern found by the dynamic FISTA variant of DFO-LS. Results from running 2,000 FISTA iterations of the lower-level problem.}
	\label{fig_mri_recons}
\end{figure}
Lastly, we turn our attention to the problem of learning MRI sampling patterns.
In this case, our lower-level problem is \eqref{EQ:LOWER:APPROX} with $A(\theta)=F$, where $F$ is the Fourier transform, and $S(\theta)$ is a nonnegative diagonal sampling matrix.
Following \cite{Chen2014}, we aim to find sampling parameters $\theta\in[0,1]^d$ corresponding to the weight associated to each Fourier mode, our sampling matrix is defined as
\begin{align}
    S(\theta) := \operatorname{diag}\left(\frac{\theta_1}{1-\theta_1}, \ldots, \frac{\theta_d}{1-\theta_d}\right) \in\R^{d\times d}.
\end{align}
The resulting lower-level problem is $\mu$-strongly convex and $L$-smooth as per \eqref{eq_condition_number} with $\|A_\theta^* S_\theta A_\theta\| = \|S(\theta)\| = \max_i \theta_i/(1-\theta_i)$ and $\lambda_{\operatorname{min}}(A_\theta^* S_\theta A_\theta) = \min_i \theta_i/(1-\theta_i)$. 

For our testing, we fix the regularization and smoothness parameters $\alpha=0.01$, $\nu=0.01$ and $\xi=10^{-4}$ in \eqref{EQ:LOWER:APPROX}.
We use $n=10$ training images constructed using the method described in \secref{sec_data_model} with $N=64$ and $\sigma=0.05$.
Lastly, we add a penalty to our upper-level objective to encourage sparse sampling patterns: $\mathcal{J}(\theta):=\beta \|\theta\|_1$, where we take $\beta=0.1$.
To fit the least-squares structure \eqref{eq_upper_exact}, we rewrite this term as $\mathcal{J}(\theta)=(\sqrt{\beta\|\theta\|_1})^2$.
To ensure that $S(\theta)$ remains finite and $\mathcal{J}(\theta)$ remains $L$-smooth, we restrict $0.001 \leq \theta_i \leq 0.99$.

We run DFO-LS with a budget of 3000 evaluations of the upper-level objective and $\rho_{\rm end}=10^{-6}$.
As in \secref{sec_denoising}, we compare dynamic accuracy DFO-LS against (fixed accuracy) DFO-LS with low- and high-accuracy evaluations given by a 1,000 and 10,000 iterations of GD or 200 and 1,000 iterations of FISTA.

With our $\ell_1$ penalty on $\theta$, we expect DFO-LS to find a solution where many entries of $\theta$ are at their lower bound $\theta_i=0.001$.
Our final sampling pattern is chosen by using the corresponding $\theta_i$ if $\theta_i>0.001$, otherwise we set that Fourier mode weight to zero.

In \figref{fig_mri_results} we show the objective decrease achieved by each variant, and the cumulative lower-level work required by each variant.
All variants except low-accuracy GD achieve the best objective value with low uncertainty.
However, as above, the dynamic accuracy variants achieve this value significantly earlier than the fixed accuracy variants, largely as a result of needing much fewer GD/FISTA iterations in the (lower accuracy) early upper-level evaluations.
In particular dynamic accuracy GD reaches the minimum objective value about 100 times faster than high-accuracy GD.
We note that FISTA with 200 iterations ends up requiring fewer lower-level iterations after a large number of upper-level evaluations, but the dynamic accuracy variant achieves is minimum objective value sooner.

We show the final pattern of sampled Fourier coefficients (after thresholding) in \figref{fig_mri_sampling}.
Of the five variants which found the best objective value, all reached a similar set of `active' coefficients $\theta_i>0.001$ with broadly similar values for $\theta_i$ at all frequencies.
For demonstration purposes we plot the reconstructions corresponding to the coefficients from the `dynamic FISTA' variant in \figref{fig_mri_recons} (the reconstructions of the other variants were all similar).
All the training images are reconstructed to high accuracy, with only a small loss of contrast near the jumps.

\section{Conclusion}
We introduce a dynamic accuracy model-based DFO algorithm for solving bilevel learning problems.
This approach allows us to learn potentially large numbers of parameters, and allowing inexact upper-level objective evaluations with which we dramatically reduce the lower-level computational effort required, particularly in the early phases of the algorithm.
\revision{Compared to fixed accuracy DFO methods, we often achieve better upper-level objective values and low-accuracy methods, and similar objective values as high-accuracy methods but with much less work: in some cases up to 100 times faster.}
These observations can be made for both lower-level solvers GD and FISTA, with different fixed accuracy requirements, for ROF-denoising and learning MRI sampling patterns.
Thus the proposed approach is robust in practice, computationally efficient and backed by convergence and worst-case complexity guarantees.
Although the upper-level problem is nonconvex, our numerics do not suggest that convergence to non-global minima is a point for concern here.

Future work in this area includes relaxing the smoothness and/or strong convexity assumptions on the lower-level problem (making the upper-level problem less theoretically tractable).
Our theoretical analysis would benefit from a full proof that our worst-case complexity bound on the lower-level computational work is tight.
Another approach for tackling bilevel learning problems would be to consider gradient-based methods which allow inexact gradient information.
Lastly, bilevel learning appears to compute a regularization parameter choice strategy which yields a convergent regularization method. Further investigation is required to back these numerical results by sound mathematical theory.

\printbibliography
\end{document}